\DeclareMathOperator{\R}{\mathbb{R}}
\DeclareMathOperator{\id}{id}
\newcommand{\Diff}{\operatorname{Diff}}
\newcommand{\Isom}{\text{Isom}}
\newcommand{\app}[4]{\begin{array}{ccl}
   #1 & \longrightarrow & #2 \\
   #3 & \longmapsto & #4 \\
\end{array}}
\newcommand{\apps}[4]{\begin{array}{ccl}
   #1 & \to & #2 \\
   #3 & \mapsto & #4 \\
\end{array}}
\def\bp{{\bm{p}}}
\def\bq{{\bm{q}}}
\def\bqS{{\bm{q_S}}}
\newcommand{\D}[1]{\mathsf{D}_{#1}}
\newtheorem{defi}{Definition}[section]
\newtheorem{prop}[defi]{Proposition}
\newtheorem{theo}[defi]{Theorem}
\newtheorem{lemma}[defi]{Lemma}
\newtheorem{rem}[defi]{Remark}
\newtheorem{exam}[defi]{Example}
\title{Decoupling actions of  finite-dimensional Lie groups and of groups of diffeomorphisms in the large deformation framework}
\author{
  Rayane Mouhli\thanks{Université Paris Cité, MAP5 — \texttt{rayane.mouhli@math.cnrs.fr}} \quad
  Thomas Pierron\thanks{ENS Paris-Saclay, Centre Borelli — \texttt{thomas.pierron@ens-paris-saclay.fr}}
}
\date{} % Remove or set a custom date
\begin{document}
\maketitle

\begin{abstract}
In computational anatomy, the Large Deformation Diffeomorphic Metric Mapping (LDDMM) framework has become a central tool for modeling smooth, invertible transformations between shapes such as curves or landmarks.
In this paper, we extend this framework  by enriching diffeomorphic deformations with transformations induced by finite-dimensional Lie groups (e.g.~isometries, scalings), and we develop a registration model that decouples the actions of these two types of deformation on the shape during the matching process. To achieve this, we consider semidirect products between finite-dimensional groups and groups of diffeomorphisms, endowed with a right-invariant sub-Riemannian structure that give rise to new variational problems for shape registration. By exploiting symmetries and reduction theory, we decouple the contributions of each group throughout the matching process. We further extend the framework to incoroporate anisotropic deformations that preferentially favor certain directions during registration. On the numerical side, we propose an algorithm based on a joint optimization over both deformation groups, in contrast to the standard two-stage approach that optimizes first over the finite-dimensional component and then over the diffeomorphic one. Experiments on curves and landmarks demonstrate that the proposed joint optimization improves registration accuracy and more effectively disentangles the contributions of the two deformation groups.

\end{abstract}
 
\tableofcontents
\newpage

\section{Introduction}

Shape analysis is a field of study recently developed in response to the increase of innovations in medical imaging techniques. Its purpose is to compare several shapes among a family of shapes to study their variability and perform statistics on them. In particular, the deformation of a source shape onto a target shape is a central topic since it enables to track the difference of geometric properties between shapes. A well-known method for such registration is the Large Deformation Diffeomorphic Metric Mapping (LDDMM) framework introduced by Beg et al. \cite{BegMillerTrouveYounes2005}, where shapes are deformed by flows of diffeomorphisms defined on an ambient space. These diffeomorphisms are obtained by integrating time-varying vector fields belonging to a Reproducing Kernel Hilbert Space $V$(RKHS) \cite{Aro50,glaunes2005transport} from which the energy of the deformation is defined. In this setting, given a source $q_S$ and a target $q_T$ shape (curves, landmarks, images etc.), the registration task can be expressed as the following minimization problem
\begin{eqnarray}
    \label{var_pb_LDDMM}
    \inf_{v \in L^2([0,1],V)} J(v) &=& \int_0^1 \frac{1}{2}  \vert v_t \vert_V^2 \, dt + \mathcal{D}(\varphi_1) \\
     \text{ s.t }& &     \left\{
        \begin{array}{l}
            \dot{\varphi}_t =  v_t \circ \varphi_t \\
            \varphi_0 = \id        
        \end{array} 
        \right.\notag
\end{eqnarray}
where $\mathcal{D}$ is a data attachment term measuring the distance between the target and the deformed shape at final time.

For a wide range of applications, the deformation aligning a source with a target can be decomposed into an affine motion (e.g isometries, translations, scalings, etc.) and a diffeomorphic one. Thus, for statistical analysis of shape transformations, it is important to distinguish the contribution of each type of deformations, which is not possible in a classic large deformation model. In practice, a first step is often performed to affinely align the source and the target, which implies that the affine registration is completely independent of the diffeomorphic one. Such a two-step procedure can produce incoherent results due to this independence, which motivates the development of models that handle both deformations simultaneously. 

A first approach for combining several types of deformations has been introduced by Bruveris et al. \cite{MMRI}, based on a semidirect product of groups encoding both coarse and fine deformation scales. In the particular case of a semidirect product of groups of diffeomorphisms \cite{BruRiVia}, this model is actually equivalent to considering a single type of deformation model generated by RKHS induced by a sum of kernels \cite{risser2010,risser2011simultaneous}. Gris et al. \cite{gris2015sub} defined the modular deformation framework allowing the combination of several structured vector fields to generate more complex deformations. More recently, a multiscale framework has been developed \cite{gga}, following the work of \cite{BME}, to perform a sequential matching, from the coarser to the finer scale, with a group of deformations as a direct product of a finite dimensional Lie group representing affine motion and group of diffeomorphisms. Other multiscale approaches are presented in \cite{debroux2023multiscale,MODIN20191009,sommer2011,sommer2013sparse}.

In this paper, we extend the LDDMM setting by adapting the framework developed in \cite{general_setting} and further elaborated in \cite{gga, Pierron2024}, to consider groups of deformations constructed as semidirect products of finite-dimensional Lie groups and groups of diffeomorphisms. Thus, we consider a general shape space $\mathcal{Q}$, modeled as a Banach manifold, on which this group of deformations acts. In particular, we study the differential structure of this group, the differentiability of its action, as well as the definition of right-invariant sub-Riemannian structures on it, to formulate new matching variational problems. This general theoretical framework encompasses the particular example of rigid motions and scalings. An additional challenge is to ensure the decoupling of the actions of both groups on a shape during the registration process. Indeed, if the finite-dimensional group is the group of rotations, any statistical study aimed at quantify the rotation of the shape becomes biased when the diffeomorphism contributes to the rotation of the shape. Therefore, decoupling the two types of deformation is essential for a meaningful analysis of the total deformation.  We achieve this decoupling thanks to a reduction method \cite{MARSDEN1974121,marsden1994introduction,Satzer1977} that allows to quotient out the action of the finite dimensional group on the total deformation. 
Indeed, this reduction method leads to a more coherent parameterization of both types of deformation, in the sense that the statistical analysis performed on each component becomes more accurate and more representative of the underlying geometric deformations.

\paragraph{Main contributions.} We present the main contributions of this article :

\begingroup
\renewcommand\labelenumi{(\theenumi)}
\begin{enumerate}
    \item Considering a finite-dimensional Lie group $G$ and $\Diff_{C_0^k}(\R^d)$ the group of $C^k$-diffeomorphisms that tends to identity, we define the sub-Riemannian structure  on the group of deformations $G \ltimes \Diff_{C_0^k}(\R^d)$ and its action on a shape space $\mathcal{Q}$ in the context of a matching problem. To add informations on the shape and ease the matching, we consider the augmented shape space $\tilde{\mathcal{Q}}=G \times \mathcal{Q}$. The addition of the finite dimensional Lie group to the shape space allows to get more informations about the shape, as orientation for rotations or position for translations. In particular, we introduce a change of variable $\tilde{q}=g^{-1}\cdot q$ that removes the contribution of $G$ on the shape. It also allows to consider the diffeomorphic deformation  in a natural reference frame associated to the shape, without influence of the finite dimensional group. 

    \item For a space of vector fields $V \hookrightarrow C_0^{k+2}(\R^d,\R^d)$ that is $G$-invariant, we present a reduction method that allows to decouple the action of $G$ and $\Diff_{C_0^k}(\R^d)$ on $\mathcal{Q}$ by satisfying constraints on the momentum $\mu = 0$  where $\mu : T^*G \oplus T^*\mathcal{Q} \to \mathfrak{g}^*$. For an Hamiltonian $H : T^*G \oplus T^* \mathcal{Q} \to \R$, Hamiltonian equations characterize geodesics of the matching problem. By restricting the Hamiltonian to the zero level set $\mu^{-1}(0)$, the Hamiltonian flow on $T^*G \oplus \mu^{-1}(0)$  can be projected on $T^*G \oplus T^*(\mathcal{Q}/G)$ which can be interpreted as a decoupling between the deformations induced by $G$ and $\Diff_{C_0^k}(\R^d)$.

    \item To favorize certain directions in the deformations, instead of considering a space of vector fields $V$ that is a RKHS induced by a scalar Gaussian kernel, which is the classic framework, we introduce an anisotropic Gaussian kernel associated with an anisotropic metric $\Sigma \in S_d^{++}$, defined by $k_{\Sigma}(x,y)=\exp(-\frac{1}{2}\Vert x-y\Vert_{\Sigma^{-1}})\Sigma$ where $\langle x,y\rangle_{\Sigma}=\langle \Sigma x,y\rangle$. Then, we consider diffeomorphic deformations induced by the space of vector fields $V_{\Sigma}$ which is a RKHS with kernel $k_{\Sigma}$. To keep track of the favorized axis given by the metric $\Sigma$ during the deformation, we enrich the shape space $\mathcal{Q}$ by adding the metric to define a new shape space $S_d^{++} \times \mathcal{Q}$, which allows to transport those axis along with the shape under the action of scalings, isometries and diffeomorphisms  $ \alpha-\operatorname{Isom}(\R^d) \ltimes \Diff_{C_k^0}(\R^d)$ where $\alpha-\operatorname{Isom}(\R^d):=(\R_{>0} \times SO_d)\ltimes \R^d$.

\end{enumerate}

\endgroup

\paragraph{Structure of the paper.} Section \ref{Sec:general_framework} presents the general framework of the paper by the introduction of the group of deformations $G \ltimes \Diff_{C_0^k}(\R^d)$ and its sub-Riemannian structure. It also presents the inexact matching problem on a shape space and a reduction method to decouple the actions of both groups of deformations. Section \ref{Sec:Rigid+diffeo} focuses on the applications of the framework from the previous section to the case of curves that are deformed by isometries $G = \Isom(\R^d)$ and diffeomorphisms. Finally, section \ref{Ani:Sec:sp_const_kernels} introduces anisotropic deformation, represented by an anisotropic metric $\Sigma \in S_d^{++}$ that can be transported along the shape during the global deformation to keep the anisotropies in the same reference frame as the shape. 

\section{General framework}\label{Sec:general_framework}
In this section, we introduce the general theoretical framework for coupling the classic large deformation model (LDDMM)\cite{BegMillerTrouveYounes2005} with the action of a finite-dimensional Lie group. Such a construction will serve as foundation for the applications presented in the subsequents sections. The differential structure and the regularity properties of the group of deformations considered in this paper is presented in section \ref{Sec:diff_structure}. We introduce a right-invariant sub-Riemannian metric on this group and recall the classical completeness results. Following \cite{general_setting, gga}, we then specify in section \ref{Sec:shape_space} the hypotheses required on the differents actions and spaces to induce a sub-Riemannian structure on the space of shapes. This will serve to define a distance between shapes that will be used as a metric to introduce a variational problem for the matching of shapes, taking into account the deformations of the group. 
% In particular, we will obtain a right-invariant sub-Riemannian structure on a semidirect product half-Lie group, that will induce a sub-Riemannian structure on the space of shape.
\subsection{A semidirect product}
\label{Sec:diff_structure}
In classical large deformation model \cite{Trouve1998,BegMillerTrouveYounes2005,general_setting}, shapes deformation are performed through an action of the group of diffeomorphisms with finite regularity ($C^k$ or Sobolev). Following Lie group ideas, small deformations can be described as vector fields on $\R^d$, representing at each point the directions and speed guiding the evolution of shapes. This group of diffeomorphisms can be enriched by another finite-dimensional group $G$ that can also act on shapes, to define constrained deformations as rotations, translations or scalings.
In a recent work \cite{gga,Pierron2024}, authors suggest a general geometrical approach to define actions of infinite dimensional differentiable groups (namely half-Lie groups) on shape spaces. We adopt this framework and define hypotheses on the group $G$ in order to define the group of deformations as a semidirect product of $G$ with the group of diffeomorphisms. We study here the differential properties of this new group of deformations and define a sub-Riemannian structure on it.

\subsubsection{First definitions and differentiable structure}\label{first_def}
Let $G$ be a finite-dimensional Lie group and $\mathfrak{g}$ be its Lie algebra. The purpose of this part is to establish the theoretical framework for coupling the Lie group $G$ with the group of $C^k-$diffeomorphisms that vanishes to infinity and whose derivatives also vanish as infinity

 \[
\Diff_{C_0^k}(\R^d) = \left(\id+C_0^k(\R^d,\R^d)\right)\cap \Diff^1(\R^d)
\]
We will first make some assumptions to define this coupling as a semidirect product of $G$ and $\Diff_{C_0^k}(\R^d)$ along with its differentiable structure compatible with the composition law.
We consider the following hypotheses:
\begin{description}[leftmargin=*]
    \item[Action of $G$ on $\R^d$] We suppose $G$ acts smoothly via diffeomorphisms on $\R^d$ and we denote $g \cdot x$ the action of $g \in G$ on $x \in \R^d$. 
%     Moreover, we assume that this action is compatible with the action on the group of diffeomorphisms in the following way
% \begin{equation}
% \label{comp_hyp}
%     g^{-1} \cdot \varphi(g\cdot x) = \rho_g (\varphi)(x), \quad \text{for all $g\in G,\, \varphi\in\Diff_{C_0^k}(\R^d),\, x\in\R^d$}
% \end{equation}
    \item[Action of $G$ on $\Diff_{C_0^k}(\R^d)$] We suppose that the action of $G$ on $\R^d$ can be lifted to a continuous right action through smooth automorphisms on $\operatorname{Diff}_{C_0^k}(\R^d)$. We denote by $\rho_g(\varphi)$ the action of $g\in G$ on $\varphi \in \operatorname{Diff}_{C_0^k}(\R^d)$. This means that we have a continuous mapping $\rho : G \to \operatorname{Aut}(\operatorname{Diff}_{C_0^k}(\R^d))$, such that for all $\varphi,\psi \in \operatorname{Diff}_{C_0^k}(\R^d) \text{ and } g,h\in G$, we have
\begin{align}
&\rho_{gh} (\varphi)= \rho_h(\rho_g(\varphi)), \label{right_action_hyp} \\
&\rho_g (\varphi\circ \psi) = \rho_g(\varphi)\circ \rho_g(\psi), \label{aut_hyp} \\
&g^{-1} \cdot \varphi(g\cdot x) = \rho_g (\varphi)(x),\quad \forall x\in\R^d
\label{comp_hyp}
\end{align}
and that the mapping $\varphi \mapsto \rho_g(\varphi)$ is a smooth diffeomorphism on $\operatorname{Diff}_{C_0^k}(\R^d)$. The third condition \eqref{comp_hyp} is the compatibility condition between the action of $G$ on $\R^d$ and the action of $G$ on $\Diff_{C_0^k}(\R^d)$.

\item[Regularity of the action] For $u\in T_{\id}\operatorname{Diff}_{C_0^k}(\R^d)=C_0^k(\R^d,\R^d)$, since the action $\rho_g$ is differentiable, we can consider  $d_{\id}\rho_g u = \partial_\varphi \vert_{\varphi=\id}\rho_g(\varphi) \cdot u$ the infinitesimal action of $g$ on $u$. Finally, we suppose that the action $\rho$ induces a continuous action on $\operatorname{Diff}_{C_0^{k+l}}(\R^d)$ by smooth diffeomorphisms, and such that 
$$
\begin{array}{ccl}
    G\times \operatorname{Diff}_{C_0^{k+l}}(\R^d) & \longrightarrow & \operatorname{Diff}_{C_0^k}(\R^d)  \\
     g,\varphi & \longmapsto  & \rho_g(\varphi)
\end{array}
$$ is $C^l$. 
\end{description}
Under those assumptions, we can define the semidirect product :
$$\mathcal{G}^k = G\ltimes\operatorname{Diff}_{C_0^k}(\R^d)$$  with composition law
\begin{equation}
    (g,\varphi)(g',\varphi') = (gg',\rho_{g'}(\varphi)\circ\varphi')
\end{equation}

\begin{rem}[Inclusion in $\Diff_{C^k}(\R^d)$]
\label{incl_Diff}
The conditions we stated before are quite natural since the semidirect product $\mathcal{G}^k$ is a subgroup of the group $\Diff_{C^k}(\R^d)$ of all $C^k$ diffeomorphisms of $\R^d$. Indeed, the first hypothesis of the action of $G$ on $\R^d$ can be restated as a group inclusion
\[
\app{G}{\Diff_{C^k}(\R^d)}{g}{[\varphi_g :x\mapsto g\cdot x]}
\]
This allows to define the injection
\[
i:\left\{\app{\mathcal{G}^k}{\Diff_{C^k}(\R^d)}{g,\varphi}{[\varphi_g\circ\varphi :x\mapsto g\cdot \varphi(x)]}\right.
\]
Note that this injection is also a group morphism, since 
\begin{align*}
    i\big((g,\varphi)(g',\varphi')\big)(x)&=i(gg',\rho_{g'}(\varphi)\circ \varphi')(x)\\
    &= gg'\cdot \left(\rho_{g'}(\varphi)\circ \varphi'(x)\right) \\
    &= g\cdot \varphi(g'\cdot\varphi'(x)) \\
    &=i(g,\varphi)\circ i(g',\varphi')(x).
\end{align*}

In most applications, the action of $\mathcal{G}^k$ on shape spaces will be induced by the action of $\Diff_{C^k}(\R^d)$ on these shape spaces as presented in the next sections. However, this description as a semidirect product allows us to separate both the action of $G$ and the action of the group of diffeomorphisms $\Diff_{C_0^k}(\R^d)$.
\end{rem}

The natural concept of a group endowed with a differentiable Banach structure compatible with the composition law is called a Banach right half-Lie group \cite{bauer2023regularity}.
Before proving that the group $\mathcal{G}^k$ acquires such a structure, we first recall its definition.
\begin{defi}[Half-Lie group]A Banach (right) half-Lie group is a topological group with a smooth
Banach manifold structure, such that the right translation $R_{g'} : g \to gg'$ is smooth.  For $l\in \mathbb{N}$, its set of $C^k$-elements is the subgroup of element $g$ such that the left translation $L_g:g'\mapsto gg'$ and $L_{g^{-1}}: g'\mapsto g^{-1}g'$ are $C^l$.
\end{defi}

\begin{rem}
    In a sense a half-Lie group behaves like a Lie group on the \emph{left}, but not on the \emph{right} where we are only assured to be continuous and not differentiable. 
\end{rem}

% \vspace{2pt}
In particular, the group of diffeomorphisms $\Diff_{C_0^k(\R^d)}$ is an example of Banach half-Lie group \cite{bauer2023regularity,ARGUILLERE2015139}. Then, using the regularity conditions of the action of $G$ on the group $\Diff_{C_0^k}(\R^d)$, we prove that the group $\mathcal{G}^k$ is a Banach half-Lie group, and some regularity properties of the multiplication and the inverse in the group. 

\begin{prop}[Differential Structure of $\mathcal{G}^k$]
\label{gga_struc_G}
    The group $\mathcal{G}^k$ is a Banach right half-Lie group. 
    %Furthermore, it is equipped with a right-invariant local addition and $(\mathcal{G}^k)^l=\mathcal{G}^{k+l}$, where $(\mathcal{G}^k)^l$ denotes the group of $C^l$-differentiable elements of $\mathcal{G}^k$.
Furthermore the subgroup $\mathcal{G}^{k+l}$ is exactly the group of $C^l$-differentiable elements of $\mathcal{G}^k$, and we get the following regularity properties
\begin{description}[leftmargin=*]
\label{HypGroup}
    \item[(G.1)] \label{G.1} $\mathcal{G}^{k+1}$ is a subgroup of $\mathcal{G}^k$ with smooth inclusion.
    \item[(G.2)] For $l\geq 0$, the inverse mapping $\operatorname{inv}:\mathcal{G}^{k+l}\to\mathcal{G}^k$ is $C^l$.
    \item[(G.3)] \label{G.2} For $l\geq 0$, the induced multiplication
    \[
 \begin{array}{ccl}
    \mathcal{G}^{k+l}\times \mathcal{G}^k & \longrightarrow & \mathcal{G}^k \\
    ((g,\varphi),(g',\varphi')) &\longmapsto & (gg',\rho_{g'}(\varphi)\circ\varphi')\\
  \end{array}
    \]
is $C^l$ and $C^{\infty}$ in the first variable $g'$ for $g$ fixed.
    \item[(G.4)] \label{G.3} For $l\geq 0$, the induced right infinitesimal translation
    \[
 \begin{array}{ccl} 
    T_{(e_G,\id)}\mathcal{G}^{k+l}\times \mathcal{G}^k & \longrightarrow & T\mathcal{G}^k \\
    ((X,u),(g,\varphi)) &\longmapsto & (X,u)\cdot (g,\varphi) =(d_{e_G}R_g(X),d_{e_G}\rho_g (u) \circ \varphi) \\
  \end{array}
\] 
is a $C^l$ mapping, and $C^{\infty}$ with regards to the first variable. 
\item[(G.5)] The induced left infinitesimal translation
    \[ 
 \begin{array}{ccl} 
    \mathcal{G}^{k+1}\times T_{(e_G,\id)}\mathcal{G}^k & \longrightarrow & T\mathcal{G}^k \\
    ((g,\varphi),(X,u)) &\longmapsto & d_{(e_G,\id)}L_{(g,\varphi)}(X,u)\\
  \end{array}
\] is $C^1$.
\end{description}
\end{prop}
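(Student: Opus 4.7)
The plan is to build the proof in a modular fashion, combining the hypotheses on the action $\rho$ with the known half-Lie group structure of $\Diff_{C_0^k}(\R^d)$ established in \cite{bauer2023regularity,ARGUILLERE2015139}. As a Banach manifold, $\mathcal{G}^k$ is simply the product $G\times\Diff_{C_0^k}(\R^d)$, where $G$ is finite-dimensional and $\Diff_{C_0^k}(\R^d)$ is an open subset of the affine Banach space $\id+C_0^k(\R^d,\R^d)$. To show it is a right half-Lie group, I would fix $(g',\varphi')$ and verify smoothness of $R_{(g',\varphi')}$ by decomposing its second coordinate $\varphi\mapsto\rho_{g'}(\varphi)\circ\varphi'$ into the smooth diffeomorphism $\rho_{g'}$ of $\Diff_{C_0^k}(\R^d)$ (which is a hypothesis) followed by right composition by the fixed $\varphi'$, known to be smooth on $\Diff_{C_0^k}(\R^d)$; the first coordinate $g\mapsto gg'$ is smooth since $G$ is a Lie group.

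Property (G.1) is immediate from the continuity of the linear embedding $C_0^{k+1}(\R^d,\R^d)\hookrightarrow C_0^k(\R^d,\R^d)$. For the identification of $\mathcal{G}^{k+l}$ with the group of $C^l$-elements, and simultaneously for (G.3), the key observation is that left translation reads $L_{(g,\varphi)}(g',\varphi')=(gg',\rho_{g'}(\varphi)\circ\varphi')$. The classical fact that left composition $\psi\mapsto\varphi\circ\psi$ is $C^l$ on $\Diff_{C_0^k}(\R^d)$ exactly when $\varphi\in\Diff_{C_0^{k+l}}(\R^d)$, combined with the hypothesis that $(g',\varphi)\mapsto\rho_{g'}(\varphi)$ is $C^l$ from $G\times\Diff_{C_0^{k+l}}(\R^d)$ into $\Diff_{C_0^k}(\R^d)$, yields both the correct characterization of $C^l$-elements and the joint $C^l$-regularity of the multiplication (with the $C^\infty$-regularity in the second argument for the first fixed being exactly the right-translation smoothness already established).

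For (G.2), I would use the composition law to derive the formula $(g,\varphi)^{-1}=(g^{-1},\rho_{g^{-1}}(\varphi^{-1}))$, and then combine the smoothness of inversion in $G$, the $C^l$-regularity of the inverse $\Diff_{C_0^{k+l}}(\R^d)\to\Diff_{C_0^k}(\R^d)$ (the Omega-Lemma), and the $C^l$-regularity of $\rho$ via the chain rule. Properties (G.4) and (G.5) follow by differentiating the multiplication formula at $(e_G,\id)$ in the appropriate variable: this produces the explicit expressions involving $d_{e_G}R_g$, $d_{e_G}\rho_g$, and right or left composition in $\Diff_{C_0^k}(\R^d)$, each of which has the requisite regularity by hypothesis or by the classical theory, and again one concludes by the chain rule.

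The main obstacle will be the careful bookkeeping of regularity losses along compositions, since each inversion and each differentiation in $\Diff_{C_0^k}(\R^d)$ typically costs one order of smoothness and must be absorbed into the $+l$ extra regularity of the first factor. In particular, proving that $\mathcal{G}^{k+l}$ coincides \emph{exactly} (and not merely is contained in) the set of $C^l$-elements requires a sharp converse: testing the left translation on well-chosen smooth curves in $\mathcal{G}^k$ and reading off, from the regularity of the output curve and the compatibility relation \eqref{comp_hyp}, the required membership of $\varphi$ in $\Diff_{C_0^{k+l}}(\R^d)$.
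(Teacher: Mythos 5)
Your proposal is correct and follows essentially the same route as the paper: smoothness of right translations via the decomposition $\varphi\mapsto\rho_{g'}(\varphi)\circ\varphi'$, identification of the $C^l$-elements through the known characterization for $\Diff_{C_0^k}(\R^d)$ combined with the hypothesis that $\rho_h$ preserves $\Diff_{C_0^{k+l}}(\R^d)$ (the paper obtains your ``sharp converse'' simply by evaluating at $g'=e_G$), and (G.4)--(G.5) by differentiating the multiplication at the identity. The only real divergence is (G.2), which you handle via the explicit formula $(g,\varphi)^{-1}=(g^{-1},\rho_{g^{-1}}(\varphi^{-1}))$ together with the regularity of inversion in $\Diff_{C_0^{k+l}}(\R^d)$, while the paper deduces it from (G.3) by the implicit function theorem; both arguments are valid.
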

\begin{rem}
These regularity properties (G.1-5) were also used and stated in \cite{gga}, as a general setting to extend large deformation model for registration. 
\end{rem}
\begin{proof}
The group $\mathcal{G}^k$ is immediately a topological group and a Banach manifold for $k\geq1$. It remains to prove that right translations in the group are smooth. Since $G$ is a Lie group, it is sufficient to prove that for any $(h,\psi)\in G\ltimes \operatorname{Diff}_{C^k_0}(\R^d,\R^d)$, the mapping
$$
\begin{array}{ccl}
     \operatorname{Diff}_{C_0^{k}}(\R^d) & \longrightarrow & \operatorname{Diff}_{C_0^k}(\R^d)  \\
     \varphi & \longmapsto  & \rho_h(\varphi)\circ\psi
\end{array}
$$ is smooth. This follows from the fact that $\rho$ is smooth and right translations in $\operatorname{Diff}_{C_0^k}(\R^d)$ are smooth since it is a half-Lie group. Therefore the group $\mathcal{G}^k$ is a Banach half-Lie group. We prove now the properties (G.1-5). Moreover, the group $\Diff_{C_0^{k+l}}(\R^d)$ is a subgroup of $\Diff_{C_0^k}(\R^d)$ with smooth inclusion, and thus (G.1) follows. We then prove that the subgroup \[
(\mathcal{G}^k)^l = \{(g,\varphi)\in \mathcal{G}^k, \ L_{(g,\varphi)} \text{ is } C^l\}
\]of $C^l$-differentiable elements of $\mathcal{G}^k$ as defined in \cite{bauer2023regularity} is exactly $\mathcal{G}^{k+l}$. Since $G$ is a Lie group (and therefore $G^l=G$), and the space of $C^l$-elements of $\operatorname{Diff}_{C_0^k}(\R^d)$ is $\operatorname{Diff}_{C_0^{k+l}}(\R^d)$ \cite{bauer2023regularity}, then the space $(\mathcal{G}^k)^l$ is simply
$$
(\mathcal{G}^k)^l = \{(g,\varphi) \in \mathcal{G}^k,\ \mbox{s.t. } \forall h\in G,\, \rho_h(\varphi)\in \operatorname{Diff}_{C_0^{k+l}}(\R^d)\}.
$$ Therefore if $(g,\varphi)\in (\mathcal{G}^k)^l$, by taking $h=e_G$, we immediately have $\varphi\in\operatorname{Diff}_{C_0^{k+l}}(\R^d)$. Conversely if $\varphi\in\operatorname{Diff}_{C_0^{k+l}}(\R^d)$, by hypothesis, $\rho_h(\varphi)$ is also in $\operatorname{Diff}_{C_0^{k+l}}$ for any $h\in G$. In particular, (G.3) follows, and by implicit function theorem, (G.2) is also proved as a consequence of (G.3). Moreover the tangent space at identity of $\mathcal{G}^k$ is given by 
\[
T_{(e_G,\id)}\mathcal{G}^k= C_0^k(\R^d,\R^d)\oplus \mathfrak{g}
\]
where $\mathfrak{g}$ denotes the Lie algebra of the group $G$. By smoothness of $d_{e_G}R$ in the Lie group $G$, and the differentiable conditions on the map $\rho:G\times\Diff_{C_0^k}(\R^d)\to\Diff_{C_0^k}(\R^d)$, (G.4) also follows. Finally, for $(g,\varphi)\in\mathcal{G}^{l+1}$, the differential of the left multiplication is given by
\[
d_{(e_G,\id)}L(g,\varphi)(X,u)=\left(d_{e_G}L_g(X), d\left(\rho_g(\varphi)\right)u\right),
\] where $d\left(\rho_g(\varphi)\right)u : x \mapsto d_x(\rho_g(\varphi))u(x)$ is $C^k$, so that (G.5) is also proved.
\end{proof}
\begin{rem}
\label{rem:loc_add}
Note that if the action of $G$ on $\R^d$ is proper, the group $\mathcal{G}$ can be equipped with a right-invariant local addition. Indeed, since $G$ is a finite-dimensional Lie group, it can be endowed with a right-invariant Riemannian metric and therefore with a right-invariant local addition through the exponential map. We denote this local addition by $\tau_G : V_G\subset TG \to G$, where $V_G$ is an open neighborhood of the zero section of $TG$. Moreover, since the group $G$ acts properly on $\R^d$, there exists a Riemannian metric on $\R^d$ that is $G$-invariant \cite[theorem 2]{koszul1965lectures}. Therefore the exponential map $\exp$ of this metric is a local addition on $\R^d$ that is equivariant with regards to the action of $G$, i.e. for $g\in G, (x,v)\in \R^d\times\R^d$, we have
$$
\exp_{g\cdot x}(g\cdot v) = g\cdot \exp_x(v)
$$
Now following \cite{bauer2023regularity}, this induces a right-invariant local addition $\tau_{\operatorname{Diff}}$ on $\operatorname{Diff}_{C_0^k}(\R^d)$ by taking the push-forward \[
\exp_*: \app{T\Diff_{C_0^k}(\R^d)}{\Diff_{C_0^k}(\R^d)}{(\varphi,v\circ\varphi)}{\exp_{\varphi}(u\circ\varphi)}, 
\] and because of the compatibility condition \eqref{comp_hyp}, this local addition is also equivariant with regards to the action of $G$. One can easily verify that the product $\tau_G\times\tau_{\operatorname{Diff}}$ thus defines a right-invariant local addition on $\mathcal{G}^k$. Therefore, under this condition, by \cite[theorem 3.4]{bauer2023regularity}, the regularity conditions (G.1-5) directly follows. Example \ref{ex_anisotropic_scaling} gives an example of group $G$ that do not act properly on $\R^d$.
\end{rem}

Those properties on the groups $\mathcal{G}^k$ allow to perform differentiable calculus. In particular, as stated in Prop. \ref{exist_flow}, the regularity of the group $\mathcal{G}^k$, namely whether curves of its tangent space at identity can be integrated to curves in the half-Lie group, results from those properties. Indeed, it allows to transport the dynamics from  $\mathcal{G}^k$ to $T_{(e_G,\id)}\mathcal{G}^k$ via the evolution equation \begin{equation*}
   (\dot{g}_t,\dot{\varphi}_t) = d_{(e_G,\id)}R_{(g_t,\varphi_t)}(X_t,u_t), \quad
    (g_0,\varphi_0)=(e_G,\id)
\end{equation*}
with $(X_t,u_t)\in L^2([0,1],T_{(e_G,\id)}\mathcal{G}^k)$ the Eulerian derivative. This approach  enables calculus to be performed in a vector space.  The regularity of Banach half-Lie groups was proved in \cite[Theorem 4.2]{bauer2023regularity} when restricting to smooth curves in the tangent space of identity. In our setting, we consider less regular curves with $L^2$-regularity,  which generates absolutely continuous curves in $\mathcal{G}^k$. We denote $AC_{L^2}([0,1],\mathcal{G}^k)$ the space of absolutely continuous curves in $\mathcal{G}^k$ (cf. \cite{glo} for a complete exposition) and we state the following result from \cite{gga}.

\begin{prop}[{Existence and uniqueness of a flow in $\mathcal{G}^k$ \cite[Prop~2.6]{gga}}]\label{exist_flow}
    Let $(X_t,u_t) \in L^2([0,1],\mathfrak{g}\times C_0^{k+1}(\R^d,\R^d))$ be time-varying vector fields. There exists a unique solution $(g_t,\varphi_t)\in AC_{L^2}([0,1],\mathcal{G}^k)$, called the flow of $(X_t,u_t)$, to the system 
    
\begin{equation}
\label{evol_G}
      \left\{
    \begin{array}{l}
   (\dot{g}_t,\dot{\varphi}_t) = d_{(e_G,\id)} R_{(g_t,\varphi_t)}(X_t,u_t) \\
    (g_0,\varphi_0)=(e_G,\id)
    \end{array} 
    \right.
\end{equation}    
\end{prop}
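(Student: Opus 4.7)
The system \eqref{evol_G} has a triangular structure: the first component is an ODE on the finite-dimensional Lie group $G$ alone, and the second component is a non-autonomous flow equation on $\Diff_{C_0^k}(\R^d)$ driven by a time-dependent vector field that also depends on the solution $g_t$ of the first equation. My plan is therefore to solve it in two stages.

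First I would solve the equation on $G$, namely $\dot{g}_t = d_{e_G}R_{g_t}(X_t)$ with $g_0 = e_G$, for $X_t \in L^2([0,1], \mathfrak{g})$. Since $G$ is a finite-dimensional Lie group, this is a classical result on Carathéodory solutions of right-invariant ODEs on Lie groups: the right translation $d_{e_G}R_{g}$ is smooth in $g$, so the map $(t,g)\mapsto d_{e_G}R_g(X_t)$ is $L^2$ in time and smooth in $g$. Existence, uniqueness, and absolute continuity on the whole interval $[0,1]$ follow from standard Carathéodory theory applied in a chart, together with the completeness of right-invariant fields on a Lie group (no blow-up in finite time since $g_t = g_0 \cdot \mathrm{Evol}_G(X)_t$ and the evolution operator is defined for all $X \in L^2$).

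Next, with $g_t$ now fixed and absolutely continuous, I would rewrite the second equation using the explicit form of the right translation from property \textbf{(G.4)}, namely $\dot{\varphi}_t = d_{e_G}\rho_{g_t}(u_t)\circ \varphi_t$. Setting $\tilde u_t := d_{e_G}\rho_{g_t}(u_t)$, this reduces to a classical Eulerian flow equation on $\Diff_{C_0^k}(\R^d)$ driven by the time-dependent vector field $\tilde u_t$. The regularity hypothesis that the map $(g,\varphi)\mapsto \rho_g(\varphi)$ is $C^1$ from $G\times \Diff_{C_0^{k+1}}(\R^d)$ into $\Diff_{C_0^{k}}(\R^d)$ implies that its differential at the identity, $g\mapsto d_{\id}\rho_g$, is continuous from $G$ into the space of bounded linear operators $C_0^{k+1}(\R^d,\R^d) \to C_0^{k}(\R^d,\R^d)$; actually we need $\tilde u_t \in L^2([0,1],C_0^{k+1})$ so that a $C^k$-flow exists, which is why the statement assumes $u_t\in C_0^{k+1}$ and uses the \textbf{Regularity of the action} hypothesis with $l=1$. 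Continuity of $g\mapsto d_{\id}\rho_g$ as a bounded operator from $C_0^{k+1}$ to $C_0^{k+1}$ (applying the assumption once more with $k$ replaced by $k+1$) then gives that $t\mapsto \tilde u_t$ is in $L^2([0,1], C_0^{k+1}(\R^d,\R^d))$, with norm controlled by $\sup_t \|d_{\id}\rho_{g_t}\|\cdot \|u\|_{L^2}$, a finite quantity by compactness of $\{g_t\}_{t\in[0,1]}$.

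Having thus reduced the problem to a standard time-dependent flow equation with an $L^2$-in-time, $C^{k+1}$-in-space vector field on $\Diff_{C_0^k}(\R^d)$, I would invoke the classical Trouvé-type existence and uniqueness theorem for flows in the half-Lie group $\Diff_{C_0^k}(\R^d)$ (as recalled e.g.\ in \cite{ARGUILLERE2015139, bauer2023regularity}), which produces a unique $\varphi_t \in AC_{L^2}([0,1],\Diff_{C_0^k}(\R^d))$ with $\varphi_0 = \id$. The pair $(g_t,\varphi_t)$ is then absolutely continuous in $\mathcal{G}^k$ with the product manifold structure, and uniqueness of the pair follows from uniqueness in each stage. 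The main obstacle is the intermediate regularity bookkeeping in the second step, since the action $\rho$ loses one derivative and one must carefully apply the regularity hypothesis at the right regularity level $(k+1)$ rather than $k$ to guarantee that the transported vector field still generates a $C^k$-diffeomorphism flow; once this is settled, the rest is a concatenation of standard results.
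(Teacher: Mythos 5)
The paper does not actually prove this proposition: it is imported verbatim from \cite[Prop.~2.6]{gga}, where it is established for a general half-Lie group satisfying (G.1--5) by a Carath\'eodory/fixed-point argument on the whole group. Your proof instead exploits the specific triangular structure of the semidirect product: solve the finite-dimensional equation on $G$ first, then freeze $g_t$ and reduce the second component to a classical Trouv\'e-type flow equation on $\Diff_{C_0^k}(\R^d)$ driven by $\tilde u_t = d_{\id}\rho_{g_t}(u_t)$. This is a legitimate and in fact more transparent route for this particular group, and uniqueness does follow stage by stage precisely because the $G$-equation does not involve $\varphi$. What the general argument of \cite{gga} buys is uniformity (it covers groups with no such splitting); what yours buys is an explicit identification of where the extra derivative in the hypothesis $u_t\in C_0^{k+1}$ is consumed, namely in the loss of one order of regularity in the action $\rho$.

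There is, however, one misstep in your regularity bookkeeping. You assert that you need $\tilde u_t\in L^2([0,1],C_0^{k+1})$ for the flow to live in $\Diff_{C_0^k}(\R^d)$, and you obtain this by ``applying the assumption once more with $k$ replaced by $k+1$''; but that instance of the hypothesis maps $G\times\Diff_{C_0^{k+2}}$ into $\Diff_{C_0^{k+1}}$, so it would require $u_t\in C_0^{k+2}$, which is not assumed. Both the claim and the fix are unnecessary: the classical flow theorem (as in \cite{ARGUILLERE2015139}) already produces a unique flow in $AC_{L^2}([0,1],\Diff_{C_0^k}(\R^d))$ from a driving field in $L^2([0,1],C_0^{k}(\R^d,\R^d))$ with $k\geq 1$. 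The correct accounting is the one you give in the first half of that step: the $l=1$ regularity hypothesis at level $k$ makes $(g,u)\mapsto d_{\id}\rho_g(u)$ jointly continuous from $G\times C_0^{k+1}(\R^d,\R^d)$ to $C_0^{k}(\R^d,\R^d)$, hence (by linearity in $u$, uniform boundedness over the compact set $\{g_t\}_{t\in[0,1]}$, and measurability of $t\mapsto(g_t,u_t)$) $\tilde u\in L^2([0,1],C_0^{k}(\R^d,\R^d))$, which is exactly what the flow theorem needs. With that correction the argument is complete.
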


We conclude this section with some examples that will be further developed in the following sections.
\begin{exam}[Isometries and diffeomorphisms]
     Consider the group $\operatorname{Isom}(\R^d)\coloneq  \operatorname{SO}_d \ltimes \R^d$ of isometries of $\R^d$. It naturally acts by conjugation on the group of diffeomorphisms by
\[
((R,T)\cdot \varphi)(x) = R^\top\varphi(Rx+T)-R^\top T.
\]
This allows to define the semidirect product $\operatorname{Isom}(\R^d)\ltimes \operatorname{Diff}_{C_0^k}(\R^d)$.
\end{exam}
\begin{exam}[Anisotropic scalings and diffeomorphisms]\label{ex_anisotropic_scaling}
     For $\rho=(\rho_1,...,\rho_d)\in\R_{>0}^d$, we denote $\D{\rho}\in M_d(\R)$ the diagonal matrix with coefficients $(\rho_i)_i$
\[
\D{\rho} =\begin{bmatrix}
\rho_1 & 0        & \cdots & 0 \\
0        & \rho_2 & \cdots & 0 \\
\vdots   & \vdots   & \ddots & \vdots \\
0        & 0        & \cdots & \rho_d
\end{bmatrix}.\]
The scaling group $(\R_{>0})^d$ also acts by conjugation on the diffeomorphism group by
\[
(\rho\cdot \varphi)(x) = \D{\rho^{-1}}\varphi(\D{\rho}x).
\] allowing to define the semidirect product group of anisotropic scalings and diffeomorphisms $\R^d_{>0}\ltimes \Diff_{C_0^k}(\R^d)$. Note that in this example, the action of the group of anisotropic scalings $\R^d_{>0}$ on $\R^d$ is not proper, since the stabilizer of $0\in\R^d$ 
\[
\operatorname{Stab}_{\R^d_{>0}}(0)=\{\rho\in\R^d, \ \text{s.t. }\rho\cdot 0 =0\}
\] is the whole group $\R^d_{>0}$, which is not compact. In particular, we cannot use the construction of remark \ref{rem:loc_add}.
\end{exam}
\subsubsection{Sub-Riemannian metric on \texorpdfstring{$\mathcal{G}^k$}{SR}}
In this section, we introduce right-invariant metrics on the group $\mathcal{G}^k$ following the general framework in \cite{gga}, \cite{Pierron2024}. Indeed, proposition \ref{gga_struc_G} places our setting exactly within this framework, from which the completeness of these metrics follow.
Suppose the algebra $\mathfrak g$ is equipped with a scalar product $\langle\cdot,\cdot\rangle_{\mathfrak{g}}$, so that we define the natural right-invariant metric :
\begin{equation}
    \langle X,Y \rangle_g = \langle Xg^{-1},Yg^{-1}\rangle_{\mathfrak{g}}
\end{equation}
Let also $V$ be a Hilbert space continuously embedded in $C_0^{k+2}(\R^d,\R^d)=T_{\id} \operatorname{Diff}_{C_0^{k+2}}(\R^d)$ and the map $K_V : V^* \to V$ denotes the inverse of the Riesz isometry on $V$. This defines a right-invariant sub-Riemannian structure on $\mathcal{G}^k$ defined by
\begin{itemize}
    \item the vector bundle $\mathfrak{g}\times V$ on $\mathcal{G}^k$, that is equivalent to the bundle $\Delta_{g,\varphi}\coloneqq T_gG\times (d_{\id}\rho_g (V))\circ\varphi \hookrightarrow T_{g,\varphi}\mathcal{G}^k$
    \item the morphism $d_{(e_G,\id)}R_{(g,\varphi)} :\left\{\begin{array}{ccl}
     \mathfrak{g}\times V &\longrightarrow  &\Delta_{g,\varphi}   \\
     X,u &\longmapsto & (d_{e_G}R_g(X), (d_{\id}\rho_g (u) )\circ \varphi)
\end{array}\right.$
    \item the metric $\langle (X,u) ,\ (X,u)  \rangle = \langle X,X\rangle_{\mathfrak{g}} + \langle u,u\rangle_{V}$
\end{itemize}
Let $L\left((g_t,\varphi_t),(X_t,u_t)\right)$ denotes the usual sub-Riemannian length given by
\begin{equation*}
L\left((g_t,\varphi_t),(X_t,u_t)\right) = \int_0^1 \sqrt{\lvert X_t \rvert_{\mathfrak{g}}^2+\lvert u_t\rvert_V^2} \, dt    
\end{equation*}
 We can equip $\mathcal{G}^k$ with the associated sub-Riemannian distance $d_{SR}$ defined as in \cite{arguillère_trélat_2017,general_setting,gga}
\begin{equation*}
    d_{SR}\left((g,\varphi),(g',\varphi')\right) \coloneqq \inf \{L\left((g_t,\varphi_t),(X_t,u_t)\right)  \mid \left((g_t,\varphi_t),(X_t,u_t)\right)\in \operatorname{Hor}_{L^1}\left((g,\varphi),(g',\varphi')\right)\}.
\end{equation*}
where $\operatorname{Hor}_{L^1}\left((g,\varphi),(g',\varphi')\right)$ is the set of $L^1$ horizontal systems joining $(g,\varphi)$ to $(g',\varphi')$, i.e curves $\left((g_t,\varphi_t),(X_t,u_t)\right)$ with $(X_t,u_t)\in L^1([0,1],V),\, (g_t,\varphi_t)\in AC_{L^1}([0,1],\mathcal{G}^k)$ such that for all $t\in [0,1]$, $(\dot{g}_t,\dot{\varphi}_t)=d_{(e_G,\id)}R_{(g_t,\varphi_t)}(X_t,u_t)$. Equivalently, the sub-Riemannian distance can be obtained by minimizing the energy \cite{ARGUILLERE2015139,general_setting,gga}
$$
E\left((g_t,\varphi_t),(X_t,u_t)\right) = \int_0^1 \lvert X_t \rvert_{\mathfrak{g}}^2+\lvert u_t\rvert_V^2 \, dt.
$$
We will rely on this expression to determine minimizers.
The space $(\mathcal{G}^k,d_{SR})$ is therefore a metric space and we have the following completeness properties as a consequence of \cite[Theorem 3.12]{gga}.
\begin{prop}[Completeness of $\mathcal{G}^k$] We get the following
\begin{enumerate}
    \item The space $(\mathcal{G}^k,d_{SR})$ is metrically complete.
    \item \label{Mult:min_ex} Suppose $V$ is $G$-invariant, in the sens that $d_{\id}\rho_g$ is an isometry of $V$ for all $g\in G$. Then the space $(\mathcal{G}^k,d_{SR})$ is geodesically convex, meaning that for any $(g,\varphi),(g',\varphi')\in \mathcal{G}^k$ such that $d_{SR}\left((g,\varphi),(g',\varphi')\right) < \infty$, there exists a minimizing geodesic connecting $(g,\varphi)$ and $(g',\varphi')$
\end{enumerate}
\end{prop}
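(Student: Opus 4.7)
The plan is to reduce both statements to the general completeness result for right-invariant sub-Riemannian structures on Banach half-Lie groups stated in \cite[Theorem~3.12]{gga}. Proposition \ref{gga_struc_G} is tailored exactly so that the regularity conditions (G.1--5) required as hypotheses of that theorem are available for $\mathcal{G}^k$, so the proof is essentially a matter of verification and of identifying the role of the $G$-invariance assumption.

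For the first point, I would unwind the definition of $d_{SR}$ and consider a $d_{SR}$-Cauchy sequence $(g_n,\varphi_n) \subset \mathcal{G}^k$. By definition of the infimum, for each pair $(g_n,\varphi_n),(g_{n+1},\varphi_{n+1})$ one can pick a horizontal control $(X^n_t,u^n_t)\in L^2([0,1],\mathfrak{g}\times V)$ whose energy approximates $d_{SR}((g_n,\varphi_n),(g_{n+1},\varphi_{n+1}))^2$. Concatenating these controls after a time rescaling produces a single curve of bounded energy, and the regularity property (G.4) together with the existence and uniqueness of a flow (Proposition \ref{exist_flow}) ensure that this curve lies in $AC_{L^2}([0,1],\mathcal{G}^k)$ and depends continuously on the control. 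Passing to a weakly convergent subsequence in $L^2([0,1],\mathfrak{g}\times V)$ (which is a Hilbert space) and using that the flow map associates to such controls absolutely continuous paths of bounded length, one obtains a limit point in $\mathcal{G}^k$ and concludes metric completeness via the general framework of \cite{general_setting,gga}.

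For geodesic convexity, the $G$-invariance hypothesis enters in a crucial way. Consider $(g,\varphi),(g',\varphi')$ at finite sub-Riemannian distance and a minimizing sequence of horizontal controls $(X^n_t,u^n_t)\in L^2([0,1],\mathfrak{g}\times V)$. Bounded energy gives, up to extraction, weak $L^2$-convergence to some limit $(X_t,u_t)$. One then needs to pass to the limit in the flow equation
\[
(\dot{g}^n_t,\dot{\varphi}^n_t) = \bigl(d_{e_G}R_{g^n_t}(X^n_t),\; (d_{\id}\rho_{g^n_t}(u^n_t))\circ\varphi^n_t\bigr).
\]
The key difficulty, and the place where the main obstacle lies, is handling the nonlinear term $(d_{\id}\rho_{g^n_t}(u^n_t))\circ\varphi^n_t$: one has only weak convergence of $u^n_t$, but the composition with $\varphi^n_t$ and the twisting by $\rho_{g^n_t}$ must be passed to the limit strongly. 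The $G$-invariance of $V$, i.e.\ that $d_{\id}\rho_g$ is an isometry of $V$ for every $g\in G$, guarantees that the twisted control $d_{\id}\rho_{g^n_t}(u^n_t)$ remains bounded in the same Hilbert space $V$ and that the total energy is preserved under the action of $G$, so that weak convergence can be transferred through $\rho_{g^n_t}$ once $g^n_t$ converges strongly (which follows from finite-dimensionality of $G$).

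With this identification, lower semicontinuity of $\|\cdot\|_{L^2}^2$ under weak convergence yields that the limit control realizes the infimum of the energy, and the associated flow provided by Proposition \ref{exist_flow} furnishes a minimizing geodesic between $(g,\varphi)$ and $(g',\varphi')$. Both conclusions then follow by invoking \cite[Theorem~3.12]{gga}, whose abstract hypotheses are precisely (G.1--5) supplemented by the $G$-invariance of $V$ for the second item.
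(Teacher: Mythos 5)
Your proposal reaches the same conclusions and, like the paper, ultimately leans on \cite[Theorem~3.12]{gga}, but the route for geodesic convexity is genuinely different. The paper's proof does not attempt to pass to the limit in the twisted flow equation at all: it uses the isometry hypothesis to perform the change of variables $\tilde{u}_t = d_{\id}\rho_{g_t}(u_t)$, which leaves the length/energy unchanged and, crucially, \emph{decouples} the constraint into $\dot{g}_t = d_{e_G}R_{g_t}(X_t)$ and $\dot{\varphi}_t = \tilde{u}_t\circ\varphi_t$. The endpoint map of the reparameterized system is then weakly continuous simply because $G$ is finite-dimensional and the classical LDDMM endpoint map $u\mapsto\varphi^u_1$ is weakly continuous \cite{Trouve1998}, and geodesic convexity drops out of the general theorem. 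Your argument instead keeps the original variables and confronts the term $(d_{\id}\rho_{g^n_t}(u^n_t))\circ\varphi^n_t$ head-on, invoking a ``weak times strong'' convergence: this can be made to work, but the step where you transfer weak convergence of $u^n$ through $\rho_{g^n_t}$ is exactly the delicate point the change of variables is designed to avoid, and as stated it is not fully justified. The isometry hypothesis alone gives uniform boundedness of the operators $d_{\id}\rho_{g^n_t}$ on $V$ but not continuity of $g\mapsto d_{\id}\rho_g$; to conclude that $\int (w_t\mid d_{\id}\rho_{g^n_t}(u^n_t))\,dt \to \int (w_t\mid d_{\id}\rho_{g_t}(u_t))\,dt$ you also need $(d_{\id}\rho_{g^n_t})^*w_t \to (d_{\id}\rho_{g_t})^*w_t$ strongly in $L^2([0,1],V^*)$, which requires appealing to the regularity assumptions on $\rho$ (and dominated convergence), not merely to finite-dimensionality of $G$. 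If you add that justification your proof is complete; otherwise the paper's decoupling trick gets you there with strictly less analysis, and it is also the step that foreshadows the reduction philosophy used throughout the rest of the paper.
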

\begin{proof}
    First point is already proved in \cite[Theorem 3.12]{gga}.  We now prove the second point.
    Since for any $g\in G$, the mapping $d_{\id}\rho_g$ is an isometry of $V$, then we get that for any horizontal system $\left((g_t,\varphi_t),(X_t,u_t)\right)$,
    \begin{align*}
        L\left((g_t,\varphi_t),(X_t,u_t)\right) &= \int_0^1 \sqrt{\lvert X_t \rvert_{\mathfrak{g}}^2+\lvert u_t\rvert_V^2} dt \\
        &= \int_0^1 \sqrt{\lvert X_t \rvert_{\mathfrak{g}}^2+\lvert d_{\id}\rho_{g_t}(u_t)\rvert_V^2} dt \\
        &=  L\left((g_t,\varphi_t),(X_t, d_{\id}\rho_{g_t}(u_t))\right)
    \end{align*}
    Therefore, we consider the change of variable $\tilde{u}=d_{\id}\rho_{g}(u)
    $ and we get that 
    \begin{eqnarray}
    d_{SR}\left((g,\varphi),(g',\varphi')\right) &= &\inf_{(X,\tilde{u}) \in L^2([0,1],\mathfrak{g} \times V)} L\left((g_t,\varphi_t),(X_t,\tilde{u}_t)\right)   \\
     \text{ s.t }& &     \left\{
        \begin{array}{ll}
        \label{evol_G2}
            \dot{g}_t &= d_{e_G}R_{g_t}(X_t)\\
            \dot{\varphi}_t &= \tilde{u}_t\circ\varphi_t            
        \end{array} 
        \right.
\end{eqnarray}
   We define the  endpoint mapping $\operatorname{End}:L^2(I,\mathfrak{g}\times V)\to \mathcal{G}^k$ such that for any $(X,u)\in L^2([0,1],\mathfrak{g}\times V)$, $\operatorname{End}(X,u)=(g_1,\varphi_1)$ where the curve $(g_t,\varphi_t)$ satisfies the dynamic in \eqref{evol_G2}. To prove geodesic convexity, it suffices to prove that the endpoint mapping is weakly continuous \cite[Theorem 3.12]{gga}. This holds since $G$ is a (finite dimensional) Lie group and  by \cite{articleTro1995,Trouve1998,BegMillerTrouveYounes2005} the endpoint mapping $u\in L^2([0,1],V)\to \varphi^u_1$ is weakly continuous, where $\varphi_t^u$ satisfies
    \begin{equation*}
            \dot{\varphi}^u_t=u_t\circ\varphi_t^u, \quad \varphi_0^u=\id
        \end{equation*}
\end{proof}

We finish by a short discussion on the characterisation of the geodesics, and more particularly the critical points of the energy. In infinite dimension, as described by Arguillère and Trélat (2017)\cite{arguillère_trélat_2017,gga}, there are three types of sub-Riemannian geodesics: normal, abnormal and strictly abnormal. We will only focus here on the sub-Riemannian normal geodesics as they also correspond to the critical points for the inexact matching problem we will consider in this paper (cf. appendix \ref{app:critic_points}). An horizontal system $(g,\varphi,X,u)\in\operatorname{Hor}_{L^1}$ is said to be a \emph{normal geodesic} if there exists non-zero Lagrange multipliers $(\lambda,p^g,p^\varphi)\in\R\times T_{(g_1,\varphi_1)}^*\mathcal{G}^k$ such that 
\begin{equation}
    \lambda dE(X,u) + d\operatorname{End}(X,u)^*(p^g,p^\varphi)=0
\end{equation}
The sub-Riemannian structure on $\mathcal{G}^k$ defines a co-metric  $K : T^*\mathcal{G}^k\to T\mathcal{G}^k$ given by
\[
K_{(g,\varphi)}(p^g,p^\varphi) = \left((d_{e_G}R_g)K_\mathfrak{g} (d_{e_G}R_g)^*p^g,(d_{\id}R_\varphi )(d_{\id}\rho_g)K_V (d_{\id}\rho_g)^*(d_{\id}R_\varphi)^* p^\varphi\right).
\]
and the corresponding Hamiltonian given by
\begin{align*}
    H\left(g,\varphi,p^g,p^\varphi\right) &= \frac{1}{2}\left((p^g,p^\varphi)\mid K_{(g,\varphi)}(p^g,p^\varphi) \right) \\
    &= \frac{1}{2}(\lvert K_\mathfrak{g} (d_{e_G}R_g)^*p^g\rvert^2_{\mathfrak{g}} +\lvert K_V (d_{\id}\rho_g)^*(d_{\id}R_\varphi)^*p^\varphi\rvert^2_V)
\end{align*}
This Hamiltonian is the energy expressed in the cotangent variables.
It drives the geodesic equations in Hamiltonian form. We recall the following result about the characterisation of sub-Riemanian geodesics.

\begin{prop}[{Sub-Riemannian geodesics, \cite[Theorem~3.15]{gga}}]
    Let $(g_t,\varphi_t)\in \operatorname{Hor}_{L^1}$ a horizontal system. Then $(g_t,\varphi_t,X_t,u_t)$ is a normal geodesic if and only if there exists a covector $(p_t^g,p_t^\varphi)\in T_{(g_t,\varphi_t)}\mathcal{G}^k$ such that 
    \begin{equation}
        (\dot{g}_t,\dot{\varphi}_t)=\nabla^\omega H(g_t,\varphi_t,p^g_t,p^\varphi_t).
    \end{equation}
\end{prop}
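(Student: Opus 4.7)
The plan is to follow the standard Pontryagin-type argument for sub-Riemannian geodesics on Banach half-Lie groups, specialized from the general framework of \cite{gga,arguillère_trélat_2017} to our semidirect product $\mathcal{G}^k$. Since Proposition~\ref{gga_struc_G} places $\mathcal{G}^k$ exactly within the setting of \cite{gga}, the task is essentially to verify the hypotheses needed to apply their Theorem~3.15 and to identify the concrete form of the Hamiltonian gradient.

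First, I would set up the problem as constrained optimization: the energy $E(X,u) = \int_0^1 (|X_t|_{\mathfrak{g}}^2 + |u_t|_V^2)\,dt$ is to be minimized among $(X,u)\in L^2([0,1],\mathfrak{g}\times V)$ subject to $\operatorname{End}(X,u) = (g_1,\varphi_1)$, where $\operatorname{End}$ is the endpoint mapping defined by \eqref{evol_G}. Using property \textbf{(G.4)} and Proposition~\ref{exist_flow}, the endpoint map is well-defined and differentiable on $L^2([0,1],\mathfrak{g}\times V)$, so the Lagrange multiplier rule in Banach spaces applies: a normal critical point satisfies $dE(X,u) + d\operatorname{End}(X,u)^*(p^g,p^\varphi) = 0$ for some $(p^g,p^\varphi)\in T^*_{(g_1,\varphi_1)}\mathcal{G}^k$.

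Next, I would introduce the time-dependent adjoint covector $(p^g_t,p^\varphi_t)\in T^*_{(g_t,\varphi_t)}\mathcal{G}^k$ as the pullback of the terminal covector $(p^g,p^\varphi)$ along the linearization of the flow equation \eqref{evol_G}. Using right-invariance (again \textbf{(G.4)}), the computation of $d\operatorname{End}(X,u)^*$ yields a costate equation whose dual action on a variation $(\delta X,\delta u)$ is
\begin{equation*}
\int_0^1 \bigl\langle (d_{e_G}R_{g_t})^* p_t^g,\,\delta X_t\bigr\rangle + \bigl\langle (d_{\id}\rho_{g_t})^*(d_{\id}R_{\varphi_t})^* p_t^\varphi,\,\delta u_t\bigr\rangle \, dt.
\end{equation*}
Combining with $dE(X,u)$ and identifying through the Riesz isometries $K_{\mathfrak{g}}$ and $K_V$, the pointwise optimality conditions become
\begin{equation*}
X_t = K_{\mathfrak{g}}(d_{e_G}R_{g_t})^* p_t^g, \qquad u_t = K_V (d_{\id}\rho_{g_t})^*(d_{\id}R_{\varphi_t})^* p_t^\varphi,
\end{equation*}
which are precisely the $(p^g,p^\varphi)$-partial derivatives of $H$, i.e.\ the state half of the symplectic gradient $\nabla^\omega H$. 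The adjoint equation, read backwards, gives the costate half.

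Finally, I would package these together into the Hamiltonian system $(\dot{g}_t,\dot{\varphi}_t,\dot{p}_t^g,\dot{p}_t^\varphi) = \nabla^\omega H(g_t,\varphi_t,p_t^g,p_t^\varphi)$, noting that the state equations reproduce the horizontal system dynamics via the co-metric $K$. The converse direction (Hamiltonian flow $\Rightarrow$ normal geodesic) is then immediate by reversing the identification. The main obstacle is the infinite-dimensional dualization: one must justify that $d\operatorname{End}(X,u)^*$ has the expected integral representation, which requires the regularity \textbf{(G.2)}--\textbf{(G.4)} to propagate the linearized flow and its adjoint in the appropriate dual spaces. This is handled precisely as in \cite[Theorem~3.15]{gga}, whose hypotheses are fulfilled here by Proposition~\ref{gga_struc_G}, so no additional analytic input is required beyond what has already been established.
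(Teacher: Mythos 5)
Your proposal is correct and follows essentially the same route as the paper: the result is recalled from \cite[Theorem~3.15]{gga}, and the analogous Pontryagin-type argument (adjoint covector defined by a linear Cauchy problem, integration by parts to compute the dual of the endpoint/evolution map, identification of the optimality condition as $\partial_{X,u}\mathcal{H}=0$ and hence as the symplectic gradient of $H$) is exactly the one carried out in Theorem~\ref{app:critic_points} of Appendix~\ref{App:opt_cont}. The only cosmetic remark is that the forward implication does not need to invoke a Lagrange multiplier theorem, since being a normal geodesic is by definition the existence of the multipliers $(\lambda,p^g,p^\varphi)$ with $\lambda\neq 0$.
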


Note that the property of $V$ being $G$-invariant, that is $d_{\id}\rho_gV =V$ for all $g\in G$, and that $d_{\id}\rho_g$ is an isometry of $V$, is related to whether $G$ acts properly on $\R^d$, since the construction of such Hilbert spaces $V$ relies on the construction of $G$-invariant metrics in $\R^d$. In such case, the Hamiltonian simplifies to
\[
    H\left(g,\varphi,p^g,p^\varphi\right) = \frac{1}{2}(\lvert K_\mathfrak{g} (d_{e_G}R_g)^*p^g\rvert^2_{\mathfrak{g}} +\lvert K_V (d_{\id}R_\varphi)^*p^\varphi\rvert^2_V).
\]
In summary, the sub-Riemannian structure on the semidirect product group $\mathcal{G}^k$ induces a co-metric and an associated Hamiltonian formulation that fully characterize the normal geodesic, that is, the critical points of the energy relevant to the inexact matching problem. These geodesics correspond to trajectories 
$(g_t, \varphi_t)$ whose evolution is governed by the Hamiltonian flow $\nabla^\omega H$. When the space $V$ is $G$-invariant, the Hamiltonian simplifies, leading to a decoupled expression.
\subsection{Shape spaces}
\label{Sec:shape_space}
In this section, we adopt the framework developed in \cite{general_setting} and \cite{gga} to define shape spaces through actions of the groups previously defined on general Banach manifolds. This allow to induce sub-Riemannian structures on shape spaces, with a diffeomorphic part as in the classical LDDMM framework, enriched with deformations generated by a finite-dimensional Lie group $G$ (rigid motions, anisotropic motions, etc.). In particular, we will study how to separate both actions of $G$ and of the diffeomorphism part. Indeed, in many examples, the diffeomorphism part can actually learn the transport induced by $G$, and we would rather want the diffeomorphism to learn only the motion that cannot be performed by $G$. By requiring the metric on the diffeomorphism group to be invariant under the action of $G$, we can perform reduction techniques and decouple the differents modes of motions.

\subsubsection{Action on shape spaces}
\label{Action_Q}
Let $\mathcal{Q}$ be a Banach manifold, representing the shape space. We define then the augmented shape space $\tilde{\mathcal{Q}} = G\times \mathcal{Q}$, where the $G$ part gives a representation of the shape in a coarsest scale. For example, it can represent the position of the shapes in $\R^d$ if $G$ is the group of translations, or the orientation if $G$ is $SO_d$. The augmented shape space $\tilde{\mathcal{Q}}$ allows to keep track of the action of $G$ on the shapes.
We suppose now that the group $\operatorname{Diff}_{C_0^k}(\R^d)$ acts on $\mathcal{Q}$ with following regularity conditions \cite{ArguillereThesis}
\begin{description}[leftmargin=*]
    \label{HypActions}
\item[Continuity of the action] The action $(\varphi,q) \mapsto \varphi\cdot q$ is continuous.
\item[Infinitesimal action] For all $q \in \mathcal{Q}$, the mapping $\varphi \mapsto \varphi\cdot q$ is $C^{\infty}$, and we denote for $u\in C_0^k(\R^d,\R^d)$, $u\cdot q = \partial_\varphi |_{\varphi=\id }(\varphi\cdot q) $ its continuous differential in $\id$, also called the infinitesimal action of $u$ on $q$.
\item[Regularity of the action] For $l>0$, the mappings 
$$
\begin{array}{ccc}
  \begin{array}{ccl}
    \Diff_{C_0^{k+l}}\times \mathcal{Q} & \longrightarrow & \mathcal{Q} \\
    (\varphi, \ q) &\longmapsto & \varphi\cdot q \\
  \end{array}
     & \mbox{ and } & 
  \begin{array}{ccl}
    C_0^{k+l}(\R^d,\R^d)\times \mathcal{Q} & \longrightarrow & T\mathcal{Q} \\
    (u, \ q) &\longmapsto & u\cdot q \\
  \end{array}
\end{array}
$$
are $C^l$.
\end{description}
In addition, assume that $G$ acts smoothly via diffeomorphisms on $\mathcal{Q}$, and  assume the following compatibility condition
\begin{equation}
\label{comp_cond}
    \varphi\cdot(g\cdot q) = g \cdot (\rho_g (\varphi) \cdot q).
\end{equation}
Therefore, we define the group action of $\mathcal{G}^k = G \ltimes \Diff_{C_0^k}(\R^d)$ on the direct product $\tilde{\mathcal{Q}} \coloneqq G \times \mathcal{Q}$ by
\begin{equation}
    A : \app{\mathcal{G}^k \times \tilde{\mathcal{Q}}}{\tilde{\mathcal{Q}}}{(g,\varphi),(h,q)}{(gh,g\cdot (\varphi \cdot q))}
\end{equation}

\begin{rem}[Compatibility condition]
    Note that the compatibility condition \eqref{comp_cond} coincides with condition \eqref{comp_hyp} when $\mathcal{Q} = \mathbb{R}^d$. Therefore condition \eqref{comp_cond} is quite natural and is often a direct consequence of the particular condition \eqref{comp_hyp} when the shape space is one of the classical ones (landmarks, curves, images). Moreover, following remark \ref{incl_Diff}, we know that each pair $(g,\varphi)$ defines a diffeomorphism $\varphi_{g}\circ\varphi\in\Diff_{C^k}(\R^d)$. Hence condition \eqref{comp_cond} often guarantees that the action of $\mathcal{G}^k$ can be induced from the deformation action of $\Diff_{C^k}(\R^d)$.
\end{rem}

We prove in the following proposition that the group $\mathcal{G}^k$ and its action on the manifold $\tilde{\mathcal{Q}}=G \times \mathcal{Q}$ defines a shape space in the sense of \cite{gga}.
\begin{prop}[Shape space $\tilde{Q}$] \label{tilde_q_shape_space}
    The action of the half-Lie group $\mathcal{G}^k$ on $\tilde{\mathcal{Q}}=G \times \mathcal{Q}$ satisfies the following conditions 
    \begin{enumerate}
        \item The action $A : \mathcal{G}^k \times \tilde{\mathcal{Q}}\to\tilde{\mathcal{Q}}$ is continuous.
        \item For $(h,q) \in \mathcal{Q}$, the mapping $A_{(h,q)}:(g,\varphi)\mapsto (gh,g\cdot(\varphi\cdot q))$ is smooth, and we denote $\xi_{(h,q)}=d_{(e_G,\id)}A_{(h,q)}$ its derivative, called the infinitesimal action.
        \item For $l>0$, the mappings \[A : \apps{\mathcal{G}^{k+l} \times \tilde{\mathcal{Q}}}{\tilde{\mathcal{Q}}}{(g,\varphi),(h,q)}{(gh,g\cdot (\varphi \cdot q))} \ \ \text{and} \ \ \xi : \apps{T_{(e_G,\id)}\mathcal{G}^{k+l} \times \tilde{\mathcal{Q}}}{T\tilde{\mathcal{Q}}}{(X,u),(h,q)}{(Xh, X \cdot q + u\cdot q)}\] are $C^l$. 
    \end{enumerate}
\end{prop}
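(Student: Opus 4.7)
The plan is to prove each of the three items by decomposing the action $A$ into its two coordinate components $A_1(g,\varphi,h,q)=gh$ and $A_2(g,\varphi,h,q)=g\cdot(\varphi\cdot q)$, and then combining the hypotheses on the $G$-action on $\mathcal{Q}$, the $\Diff_{C_0^k}(\R^d)$-action on $\mathcal{Q}$, and the Lie group structure of $G$. The compatibility condition \eqref{comp_cond} is not required for the regularity statements themselves (it is only needed to verify that $A$ is a genuine group action); the proof essentially reduces to chain-rule bookkeeping.

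For item (1), continuity of $A_1$ is immediate from continuity of multiplication in the Lie group $G$. Continuity of $A_2$ follows by writing it as the composition
\[
(g,\varphi,h,q)\longmapsto (g,\varphi\cdot q)\longmapsto g\cdot(\varphi\cdot q),
\]
where the first arrow is continuous by the assumed continuity of the $\Diff_{C_0^k}(\R^d)$-action on $\mathcal{Q}$, and the second by smoothness (hence continuity) of the $G$-action on $\mathcal{Q}$.

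For item (2), fix $(h,q)\in\tilde{\mathcal{Q}}$. The map $(g,\varphi)\mapsto gh$ is smooth as a right translation in the Lie group $G$. For the second coordinate, the hypothesis that $\varphi\mapsto\varphi\cdot q$ is $C^\infty$ from $\Diff_{C_0^k}(\R^d)$ into $\mathcal{Q}$, together with smoothness of $g\mapsto g\cdot q'$ for any $q'\in\mathcal{Q}$, gives smoothness of $(g,\varphi)\mapsto g\cdot(\varphi\cdot q)$ by composition and product rules on a finite-dimensional factor. Differentiating at $(e_G,\id)$ in a direction $(X,u)\in\mathfrak{g}\times C_0^k(\R^d,\R^d)$, the first coordinate gives $Xh$ (differential of right translation in $G$), and for the second coordinate we split the differentiation into the $g$-part and the $\varphi$-part:
\[
\partial_g\big|_{e_G}(g\cdot q)\cdot X + \partial_\varphi\big|_{\id}(\varphi\cdot q)\cdot u = X\cdot q + u\cdot q.
\]
This yields the stated expression $\xi_{(h,q)}(X,u)=(Xh,\,X\cdot q+u\cdot q)$.

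For item (3), the same decomposition works, now invoking the $C^l$-regularity hypotheses. The map $(g,h)\mapsto gh$ is $C^\infty$ on $G\times G$. By hypothesis, $(\varphi,q)\mapsto\varphi\cdot q$ is $C^l$ from $\Diff_{C_0^{k+l}}(\R^d)\times\mathcal{Q}$ to $\mathcal{Q}$, and $(g,q')\mapsto g\cdot q'$ is $C^\infty$ on $G\times\mathcal{Q}$ by smoothness of the $G$-action; composition of $C^l$ maps is $C^l$, so $A$ is $C^l$ on $\mathcal{G}^{k+l}\times\tilde{\mathcal{Q}}$. For $\xi$, the map $(X,h)\mapsto Xh$ is smooth on $\mathfrak{g}\times G$, and for $(X,u,h,q)\mapsto X\cdot q+u\cdot q$ we combine smoothness of $(X,q)\mapsto X\cdot q$ on $\mathfrak{g}\times\mathcal{Q}$ with the $C^l$-regularity of $(u,q)\mapsto u\cdot q$ on $C_0^{k+l}(\R^d,\R^d)\times\mathcal{Q}$, which is exactly the second regularity hypothesis. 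I expect no genuine obstacle here; the argument is an exercise in compositions of maps with specified regularity, and the main thing to be careful about is lining up the indices $k$ and $k+l$ so that each factor lands in a space where the relevant hypothesis applies.
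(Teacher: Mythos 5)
Your proposal is correct and follows essentially the same route as the paper's proof: decompose $A$ into its $G$-coordinate and $\mathcal{Q}$-coordinate, then invoke smoothness of multiplication in $G$, smoothness of the $G$-action on $\mathcal{Q}$, and the continuity/$C^\infty$/$C^l$ hypotheses on the $\Diff_{C_0^k}(\R^d)$-action, with the compatibility condition playing no role in the regularity claims. Your version is in fact slightly more detailed than the paper's (which omits the explicit computation of $\xi_{(h,q)}(X,u)=(Xh,\,X\cdot q+u\cdot q)$), but the argument is the same.
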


\begin{proof}
    \par The action $A : \mathcal{G}^k \times \tilde{\mathcal{Q}} \rightarrow \tilde{\mathcal{Q}}$ is continuous since the multiplication is continuous in the Lie group $G$ and the groups $G$ and $\Diff_{C_0^k}(\R^d)$ acts continuously on $\mathcal{Q}$. 
    \par For $(h,q) \in \mathcal{Q}$, the mapping $A_{(h,q)} $ is smooth since  the multiplication is smooth in $G$  and the mappings $g \mapsto g\cdot q$ and $\varphi \mapsto \varphi \cdot q$ are smooth by hypothesis.
    \par For $l>0$, the mappings $A : \mathcal{G}^{k+l} \times \tilde{\mathcal{Q}} \to \tilde{\mathcal{Q}}$ and $\xi : T_{(e_G,\id)}\mathcal{G}^{k+l} \times \tilde{\mathcal{Q}} \to \tilde{\mathcal{Q}}$ are $C^l$. Indeed, the group operation in $G$ is smooth, the action of $G$ on $\mathcal{Q}$ is smooth and the mapping $(\varphi,q)\mapsto \varphi \cdot q$  is $C^l$.
\end{proof}

Consequently, under these conditions, the shape space $\tilde{\mathcal{Q}}$ inherits a sub-Riemannian structure from the group $\mathcal{G}^k$ equipped with a right invariant sub-Riemannian metric as presented in \cite{gga}.

\subsubsection{Inexact matching problem}
\label{Sec:var_pb}
Following the classical LDDMM framework \cite{BegMillerTrouveYounes2005}, we consider a Hilbert space $V$ of vector fields with continuous inclusion in $C_0^{k+2}(\R^d,\R^d)$. Here, $V$ is not necessarily $G$-invariant.  As shown in the previous subsection, we can define a strong sub-Riemannian structure on the group $\mathcal{G}^k$ which allows to perform matching on $\mathcal{Q}$ through the action of $\mathcal{G}^k$. Consider a template shape $q_S\in\mathcal{Q}$ and its corresponding shape in the augmented shape space $(e_G,q_S) \in \tilde{\mathcal{Q}}$. We recall that given two time-varying vector fields $(X_t,v_t) \in L^2([0,1],\mathfrak{g} \times V)$, we can generate a flow in $(g_t,\varphi_t)\in\mathcal{G}^k$ which defines a trajectory of shape $(h_t,q_t):=(g_t,\varphi_t)\cdot (e_G,q_S)$. Note that $h_t=g_t$ since we chose $e_G$ as the source for $G$. The dynamic of the deformed shape under the action of $\mathcal{G}^k$ can be deduced by the infinitesimal action :
\begin{equation}
    \label{dyn_tQ}
    (\dot{g}_t,\dot{q}_t)= \xi_{(g_t,q_t)}(X_t,v_t)=(X_tg_t,X_t\cdot q_t + v_t\cdot q_t), \quad (g_0,q_0) = (e_G,q_S)
\end{equation}
The matching of a source shape $(e_G,q_S)$ onto a target $(g_T,q_T)$ can be expressed as an energy minimization problem :

\begin{eqnarray}
    \label{var_pb}
    \inf_{(X,v) \in L^2([0,1],\mathfrak{g} \times V)} J(X,v) &=& \int_0^1 \frac{1}{2} \vert X_t \vert_{\mathfrak{g}}^2 + \frac{1}{2} \vert v_t \vert_V^2 \, dt + \mathcal{D}(g_1,q_1) \\
     \text{ s.t }& &     \left\{
        \begin{array}{l}
            \dot{g}_t = X_tg_t \\
            \dot{q}_t = X_t \cdot q_t + v_t \cdot q_t \\
            (g_0,q_0) = (e_G,q_S)        
        \end{array} 
        \right.\notag
\end{eqnarray}
where $\mathcal{D} : \tilde{\mathcal{Q}} \rightarrow \R$ is a data attachment term measuring the distance of the deformed shape at final time to the target shape. We will discuss in the next section several examples of data attachment terms. We recall the following result on existence of minimizers \cite[Prop. 3.16]{gga} (cf. also appendix \ref{App:opt_cont} for a discussion on variational problems on Banach manifold).
\begin{prop}[Existence of minimizers of $J$]
Suppose the data attachment term $\mathcal{D}$ is continuous, then the problem \eqref{var_pb} admits minimizers.
\end{prop}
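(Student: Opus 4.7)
The plan is to follow the classical direct method from the calculus of variations, as in the original LDDMM existence proof \cite{BegMillerTrouveYounes2005,Trouve1998}, adapted to the semidirect product setting by leveraging the regularity properties of the action established in Proposition \ref{tilde_q_shape_space}. First I would take a minimizing sequence $(X^n, v^n) \in L^2([0,1],\mathfrak{g}\times V)$ with $J(X^n,v^n)\to\inf J$. Assuming $\mathcal{D}$ is bounded below (the standard case where $\mathcal{D}$ is a squared distance-like quantity, so in particular non-negative), the kinetic energy terms of $J$ provide uniform bounds on $\|X^n\|_{L^2([0,1],\mathfrak{g})}$ and $\|v^n\|_{L^2([0,1],V)}$. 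Since $L^2([0,1],\mathfrak{g}\times V)$ is a Hilbert space, I would extract a (non-relabeled) subsequence such that $(X^n,v^n)\rightharpoonup (X^*,v^*)$ weakly.

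Next I would pass to the limit in the state equation \eqref{dyn_tQ}. The $G$-component $\dot g^n_t = X^n_t g^n_t$ is a Carathéodory ODE on the finite-dimensional Lie group $G$, so weak $L^2$-convergence of $X^n$ combined with the standard finite-dimensional ODE continuity-in-the-control argument yields $g^n_t\to g^*_t$ uniformly on $[0,1]$, where $g^*_t$ is the flow driven by $X^*$. For the $\mathcal{Q}$-component $\dot q^n_t = X^n_t\cdot q^n_t + v^n_t\cdot q^n_t$, I would invoke the weak continuity of the endpoint mapping on the shape space $\tilde{\mathcal{Q}}$, which is the content of \cite[Prop.~3.16]{gga} referenced in the statement. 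This delivers the convergence $(g^n_1,q^n_1)\to(g^*_1,q^*_1)$ in $\tilde{\mathcal{Q}}$.

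Finally I would combine weak lower-semicontinuity of the squared $L^2$-norm with continuity of $\mathcal{D}$ to obtain
\[
J(X^*,v^*) \;=\; \int_0^1 \tfrac{1}{2}\lvert X^*_t\rvert_{\mathfrak{g}}^2 + \tfrac{1}{2}\lvert v^*_t\rvert_V^2\, dt + \mathcal{D}(g^*_1,q^*_1) \;\leq\; \liminf_{n} J(X^n,v^n) \;=\; \inf J,
\]
which forces $(X^*,v^*)$ to be a minimizer.

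The main obstacle is the weak continuity of the endpoint mapping for the coupled $(g_t,q_t)$ system: the $G$-part and the $\mathcal{Q}$-part are genuinely intertwined through the infinitesimal action $X\cdot q$, so one cannot simply reuse the scalar LDDMM result of \cite{articleTro1995,Trouve1998} verbatim. This is precisely the technical content of \cite[Prop.~3.16]{gga}, itself a consequence of the regularity conditions on the action verified in Proposition \ref{tilde_q_shape_space} together with the flow existence result Proposition \ref{exist_flow}. Once this ingredient is granted, the remaining steps — coercivity, weak compactness in $L^2$, and lower-semicontinuity — are routine.
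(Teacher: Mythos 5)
Your proposal is correct and follows exactly the argument the paper relies on: the paper gives no proof of its own but defers to \cite[Prop.~3.16]{gga}, whose content is precisely the direct method you describe (coercivity of the kinetic term, weak $L^2$-compactness of the controls, weak continuity of the endpoint map on the shape space, and weak lower semicontinuity of the energy). Your explicit caveat that $\mathcal{D}$ must be bounded below is well taken — continuity alone, as literally stated in the proposition, would not suffice — but this matches the paper's implicit convention that data attachment terms are nonnegative.
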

The end of this section is devoted to the computation and the characterization of the critical points of the energy through an Hamiltonian formulation, similarly to the LDDMM framework. We define the following Hamiltonian

\[H(g,q,p^g,p,X,v) = (p^g\,|\, Xg) + (p \,|\, v\cdot q + X\cdot q) - \frac{1}{2} \vert X \vert_{\mathfrak{g}}^2 - \frac{1}{2} \vert v \vert_V^2 \]
By \cite{ArguillereThesis,gga}, critical points of the energy $J$ satisfies the Hamiltonian equations.
\begin{prop}[Critical points of energy and Hamiltonian dynamics]
\label{critical_point_H}
    Let $(X,v)\in L^2([0,1],\mathfrak{g}\times V)$, and denote $(g_t,q_t)$ the associated curve in $\tilde{\mathcal{Q}}$ satisfying the evolution equation \eqref{dyn_tQ}. Then $(X,v)$ is a critical point of the energy problem \eqref{var_pb} if and only if $(g_t,q_t)$ satisfies the Hamiltonian dynamic
    \[
    \left\{
        \begin{array}{ll}          (\dot{g}_t,\dot{q}_t,\dot{p}_t^{g},\dot{p}_t) = \nabla^\omega H(g_t,q_t,p_t^{g},p_t,X_t,v_t) \\
            \partial_{X,v} H(g_t,q_t,p_t^{g},p_t,X_t,v_t) = 0
        \end{array} 
        \right.\notag
\]
where $\nabla^\omega H$ is the partial symplectic gradient of $H$ with regards to the canonical weak symplectic form on $T^*\tilde{\mathcal{Q}}$.
\end{prop}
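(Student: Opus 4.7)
The plan is to apply the Pontryagin-type first-order optimality machinery for control problems on infinite-dimensional shape spaces, as developed in \cite{ArguillereThesis, gga}. The key preliminary observation is that Proposition \ref{tilde_q_shape_space} shows $\tilde{\mathcal{Q}} = G \times \mathcal{Q}$ carries the structure of a shape space for $\mathcal{G}^k$ in the sense of \cite{gga}. Combined with the regularity of $\mathcal{G}^k$ from Proposition \ref{gga_struc_G}, this guarantees that the endpoint map $(X,v) \mapsto (g_1, q_1)$ is smooth between the appropriate Banach spaces, so that classical Lagrange multiplier arguments apply.

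First, I would reformulate \eqref{var_pb} as a Mayer-type optimal control problem on $\tilde{\mathcal{Q}}$, with state $(g_t, q_t)$, control $(X_t, v_t) \in \mathfrak{g} \times V$, dynamics given by the infinitesimal action $\xi_{(g_t, q_t)}(X_t, v_t)$, running cost $\tfrac{1}{2}|X_t|_{\mathfrak{g}}^2 + \tfrac{1}{2}|v_t|_V^2$, and terminal cost $\mathcal{D}(g_1, q_1)$. Introducing co-states $(p^g_t, p_t) \in T^*_{g_t}G \oplus T^*_{q_t}\mathcal{Q}$ with transversality condition $(p^g_1, p_1) = -d_{(g_1,q_1)}\mathcal{D}$, stationarity of the augmented Lagrangian under variations of the state with fixed endpoints yields, after integration by parts, the co-state equations corresponding to $(\dot p^g_t, \dot p_t) = -\partial_{(g,q)} H$. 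Stationarity under variations of the control yields the optimality condition $\partial_{(X,v)} H = 0$, which, since $H$ is strictly concave quadratic in $(X,v)$, determines $(X_t, v_t)$ uniquely in terms of $(g_t, q_t, p^g_t, p_t)$ via the Riesz isometries $K_{\mathfrak{g}}$, $K_V$ and the adjoints of the infinitesimal actions.

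Next, I would repackage the state and co-state equations into the intrinsic Hamiltonian form $(\dot g_t, \dot q_t, \dot p^g_t, \dot p_t) = \nabla^\omega H$ using the canonical weak symplectic form $\omega$ on $T^*\tilde{\mathcal{Q}}$. The state half $(\dot g_t, \dot q_t) = \partial_{(p^g, p)} H$ is immediate from the explicit form of $H$, which is linear in $(p^g, p)$ and pairs against $\xi_{(g, q)}(X, v)$. The use of the partial symplectic gradient $\nabla^\omega H$ rather than a full symplectic gradient is necessary because $\omega$ is only weakly non-degenerate on a Banach cotangent bundle; however, the particular block-structure of $H$ guarantees that $dH$ lies in the image of $\omega^\flat$, so $\nabla^\omega H$ is well defined. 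The converse direction, that solutions of the Hamiltonian system with the correct transversality condition are critical points of $J$, is obtained by reversing the variational argument.

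The main obstacle is the rigorous treatment of the weak symplectic structure and of the differential of the endpoint map in the Banach manifold setting; this is precisely the technical content of \cite{ArguillereThesis, gga}, which treats shape spaces acted on by groups satisfying exactly the conditions (G.1-5) and the hypotheses on the action that Proposition \ref{tilde_q_shape_space} verifies for us. My proof would therefore reduce to invoking those general results, after checking that the product structure on $\mathfrak{g} \times V$ as control space and on $G \times \mathcal{Q}$ as state space, together with the compatibility condition \eqref{comp_cond}, introduces no new difficulty beyond the classical LDDMM case. In particular, the $G$-factor being finite-dimensional contributes no regularity loss, and the infinitesimal action splits cleanly as $(Xg, X\cdot q + v \cdot q)$, so the Hamiltonian decouples into components that match the setup of \cite{gga} directly.
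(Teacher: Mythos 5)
Your proposal is correct and follows essentially the same route as the paper: the paper proves this proposition by invoking the general control-theoretic result recalled in its appendix (Theorem \ref{app:critic_points}), whose proof is exactly the adjoint-state argument you describe — define the costate by the backward linear Cauchy problem with transversality condition $(p^g_1,p_1)=-d\mathcal{D}(g_1,q_1)$, differentiate the evolution map, integrate by parts to get $dJ(u)\delta u=-\int_I\partial_u\mathcal{H}\,\delta u\,dt$, and conclude the equivalence. The only presentational difference is that you phrase it as stationarity of an augmented Lagrangian while the paper constructs the covector directly and computes $dJ$ explicitly (after first establishing, via the implicit function theorem, that the evolution map is $C^1$ with derivative solving the linearized equation — the one technical lemma you implicitly assume under ``the endpoint map is smooth'').
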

This result is contained in \cite[Theorem 3.15]{gga} and the proof is recalled in appendix \ref{App:opt_cont}.
\begin{rem}
    In canonical coordinates of the cotangent bundle $T^*\tilde{\mathcal{Q}}= T^*(G\times\mathcal{Q})$, the partial symplectic gradient is given by
    \begin{multline*}
    \nabla^\omega H(g,q,p^{g},p,X,v) = \big(\partial_{p^{g}}H(g,q,p^{g},p,X,v),\partial_pH(g,q,p^{g},p,X,v),\\-\partial_gH(g,q,p^{g},p,X,v), -\partial_qH(g,q,p^{g},p,X,v) \big)
    \end{multline*}
\end{rem}
By denoting respectively $\xi^{\mathfrak{g}}_q$ and $\xi_q$ the infinitesimal actions of $\mathfrak{g}$ and $C_0^k(\R^d,\R^d)$ on $q$, we define the reduced Hamiltonian by

\begin{equation}\label{hamilt_1}
    H(g,q,p^g,p) =\frac{1}{2} \lvert K_{\mathfrak{g}} ((d_{e_G}R_g)^* p^{g} + \xi_q^{\mathfrak{g}*}p) \rvert^2_{\mathfrak{g}} + \frac{1}{2} \lvert K_V \xi_{q}^{*}p \rvert^2_V
\end{equation}
so that the critical points of the energy $J$ are equivalently expressed as the Hamiltonian flow of $H$
\begin{equation*}
    (\dot{g},\dot{q},\dot{p}^{g},\dot{p}) = \nabla^\omega H(g,q,p^{g},p)
\end{equation*}
We can notice that the control $X = K_{\mathfrak{g}} ((d_{e_G}R_g)^* p^{g} + \xi_q^{\mathfrak{g}*}p)$ is parameterized by two different covectors which complicates its interpretation. Indeed, in computational anatomy, the covector is a covariable allowing to perform some statistics on the studied deformation. In this framework, we consider two different deformations, so we would like to be able to measure the contribution of each type of deformations separately, which can be done by having a separate covariable per deformation. To do so, we introduce a new variable
\begin{equation}
\label{goat_change_variable}
    \tilde{q}=g^{-1}\cdot q
\end{equation}
This new variable represents the shape in its intrinsic frame of reference, allowing us to track the non-affine deformation before it is affinely transported. Moreover, in the next section, we will focus on reduction which is easier to do considering this change of variable. The idea is that we will remove the influence of $G$ on the shape, in order to only capture the deformation generated by the diffeomorphism. The dynamic of this new shape is given by 

\begin{equation}
    \dot{\tilde{q}}_t = d_{\id}\rho_{g_t}(v_t) \cdot \tilde{q}_t
\end{equation}
By denoting $\tilde{v}=d_{\id}\rho_{g_t}(v_t) \in \tilde{V}=d_{\id} \rho_{g_t}(V)$, the dynamic can be simply expressed through the infinitesimal action $\dot{\tilde{q}}_t = \xi_{\tilde{q}_t}(\tilde{v}_t) = \tilde{v} _t \cdot \tilde{q}_t$. The evolution of the shape $\tilde{q}_t$ no longer depends on $X$, which is consistent with the idea of removing the influence of the deformation induced by $G$ on the deformed shape. However, $\tilde{q}_t$ depends on the shape $g$ at time $t$. Then, the energy associated to the matching problem can be expressed with respect to this new shape.

\begin{eqnarray} 
    \label{var_pb2}
    \inf_ {(X,v) \in L^2([0,1],\mathfrak{g} \times V)} \tilde{J}(X,v) &=& \int_0^1 \frac{1}{2} \lvert X_t \rvert_{\mathfrak{g}}^2 + \frac{1}{2} \lvert v_t \rvert_V^2  \, dt + \mathcal{D}(g_1,g_1\cdot \tilde{q}_1) \\
     \text{ s.t }& &     \left\{
        \begin{array}{l}
            \dot{g}_t = X_tg_t \\
            \dot{\tilde{q}}_t = \xi_{\tilde{q}_t}(\tilde{v}_t) \\
            (g_0,\tilde{q}_0) = (e_G,q_S)   
        \end{array} 
        \right.\notag
\end{eqnarray}
Similarly to the previous problem \eqref{var_pb}, given that the data attachment term is continuous, we can prove existence of minimizers of problem \eqref{var_pb2}.

\begin{prop}[Existence of the minimizers of $\tilde{J}$]
\label{eq_var_pb}
Suppose the data attachment term $\mathcal{D}$ is continuous, then the problem \eqref{var_pb2} admits minimizers.
\end{prop}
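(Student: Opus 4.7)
The plan is to reduce problem \eqref{var_pb2} to the already-solved problem \eqref{var_pb} through the change of variable \eqref{goat_change_variable}. The two problems are linked by a bijection between admissible trajectories that preserves both the regularization term and the data attachment, so existence of minimizers transfers for free.

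First, I would establish the trajectory correspondence. Given any control $(X,v)\in L^2([0,1],\mathfrak{g}\times V)$, the ODE $\dot g_t = X_tg_t$, $g_0=e_G$ uniquely defines $g_t$ in both problems. Let $(g_t,q_t)$ be the trajectory solving the dynamics of \eqref{var_pb} from $(e_G,q_S)$, and set $\tilde q_t := g_t^{-1}\cdot q_t$. Then $\tilde q_0=q_S$, and differentiating $q_t = g_t\cdot\tilde q_t$ gives
\[
X_t\cdot q_t + v_t\cdot q_t \;=\; \dot q_t \;=\; X_t\cdot q_t + g_t\cdot\dot{\tilde q}_t,
\]
so $g_t\cdot \dot{\tilde q}_t = v_t\cdot q_t$. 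Applying $g_t^{-1}$ and using the infinitesimal form of the compatibility condition \eqref{comp_cond} (obtained by differentiating $\varphi\cdot(g\cdot q)=g\cdot(\rho_g(\varphi)\cdot q)$ at $\varphi=\id$), one recovers exactly
\[
\dot{\tilde q}_t = d_{\id}\rho_{g_t}(v_t)\cdot \tilde q_t = \xi_{\tilde q_t}(\tilde v_t),
\]
which is the dynamic appearing in \eqref{var_pb2}. This is precisely the computation already carried out in the paragraph preceding the statement. Conversely, given a solution $(g_t,\tilde q_t)$ of \eqref{var_pb2}, setting $q_t := g_t\cdot \tilde q_t$ yields a solution of the dynamics in \eqref{var_pb}, establishing the bijection.

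Second, I would check that both objective functionals agree on corresponding trajectories. The regularizer $\int_0^1 \tfrac12|X_t|_{\mathfrak g}^2+\tfrac12|v_t|_V^2\,dt$ is defined from $(X,v)$ alone, hence identical in both problems. The endpoint data attachment terms also coincide, since by construction $g_1\cdot\tilde q_1 = q_1$, so $\mathcal D(g_1,q_1)=\mathcal D(g_1,g_1\cdot\tilde q_1)$. Therefore $J(X,v)=\tilde J(X,v)$ for every $(X,v)\in L^2([0,1],\mathfrak g\times V)$.

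Third, I would conclude by invoking the existence result for $J$ stated just above (continuity of $\mathcal D$ together with Prop.~3.16 of \cite{gga}): any minimizer $(X^\ast,v^\ast)$ of $J$ is, by the equality $J=\tilde J$, a minimizer of $\tilde J$ as well. The only mildly nontrivial step is the derivation in the first paragraph, and that computation is essentially already done in the surrounding text; the rest is a transparent transfer of the existing existence theorem through the change of variable.
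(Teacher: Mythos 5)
Your proof is correct. The trajectory correspondence in your first paragraph is exactly the computation the paper performs (in the text preceding the statement and again in the proof of the equivalence proposition that immediately follows): differentiating $q_t=g_t\cdot\tilde q_t$ and using the infinitesimal form of the compatibility condition \eqref{comp_cond} yields $\dot{\tilde q}_t=d_{\id}\rho_{g_t}(v_t)\cdot\tilde q_t$, and since the control $(X,v)$ and hence the regularizer are untouched while $g_1\cdot\tilde q_1=q_1$ matches the endpoint terms, $J$ and $\tilde J$ agree as functionals on $L^2([0,1],\mathfrak g\times V)$. Where you diverge slightly from the paper is in the logical organization: the paper asserts existence for $\tilde J$ by saying the direct argument for \eqref{var_pb} (weak continuity of the endpoint map and weak lower semicontinuity of the energy, via \cite[Prop.~3.16]{gga}) applies "similarly", and only afterwards proves the equivalence of minimizers; you instead derive existence as a corollary of that equivalence, using the already-established existence for $J$. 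Your route is cleaner in that it avoids re-running the compactness argument, at the cost of making existence for $\tilde J$ logically dependent on existence for $J$ rather than self-contained; both are valid, and the underlying computation is identical.
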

Moreover, the problems \eqref{var_pb} and \eqref{var_pb2} are equivalent through the change of variable $\tilde{q}=g^{-1}\cdot q$
\begin{prop}[Equivalence of the problems $J$ and $\tilde{J}$]

The minimizers of $J$ are exactly the minimizers of $\tilde{J}$.
    
\end{prop}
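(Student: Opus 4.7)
The plan is to observe that both problems share the same control space $L^2([0,1],\mathfrak{g}\times V)$ and the same quadratic energy term, so it suffices to show that for every admissible control $(X,v)$, the trajectory $(g_t,q_t)$ generated by the dynamics of \eqref{var_pb} from $(e_G,q_S)$ is related to the trajectory $(g_t,\tilde{q}_t)$ generated by the dynamics of \eqref{var_pb2} from $(e_G,q_S)$ through the bijection $\tilde{q}_t=g_t^{-1}\cdot q_t$, and that the data attachment terms agree under this identification. Since the bijection preserves the objective functional, the infima coincide and the minimizers of $J$ are exactly those of $\tilde{J}$.

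The core step is to verify the transformation of the dynamics. Fix $(X,v)\in L^2([0,1],\mathfrak{g}\times V)$ and let $(g_t,q_t)$ solve \eqref{var_pb}. Setting $\tilde{q}_t=g_t^{-1}\cdot q_t$, so that $q_t=g_t\cdot\tilde{q}_t$ and $\tilde{q}_0=q_S$, I differentiate the identity $q_t=\Phi_{g_t}(\tilde{q}_t)$ where $\Phi_g(q)=g\cdot q$. Using $\dot{g}_t=X_tg_t$ and computing $\frac{d}{ds}\big|_{s=0}\exp(sX_t)g_t\cdot\tilde{q}_t=X_t\cdot q_t$, the chain rule yields
\begin{equation*}
\dot{q}_t = X_t\cdot q_t + g_t\cdot\dot{\tilde{q}}_t,
\end{equation*}
where $g_t\cdot\dot{\tilde{q}}_t$ denotes the pushforward of the tangent vector $\dot{\tilde{q}}_t$ by $\Phi_{g_t}$. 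Comparing with the dynamic $\dot{q}_t=X_t\cdot q_t+v_t\cdot q_t$ of \eqref{var_pb}, I obtain $g_t\cdot\dot{\tilde{q}}_t=v_t\cdot q_t$. Differentiating the compatibility condition \eqref{comp_cond} at $\varphi=\id$ gives the infinitesimal identity $v\cdot(g\cdot q)=g\cdot(d_{\id}\rho_g(v)\cdot q)$, from which $\dot{\tilde{q}}_t=d_{\id}\rho_{g_t}(v_t)\cdot\tilde{q}_t=\tilde{v}_t\cdot\tilde{q}_t=\xi_{\tilde{q}_t}(\tilde{v}_t)$, which is exactly the dynamic in \eqref{var_pb2}. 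The converse implication is symmetric: starting from a solution $(g_t,\tilde{q}_t)$ of \eqref{var_pb2} and setting $q_t=g_t\cdot\tilde{q}_t$, the same infinitesimal compatibility reconstructs the dynamic of \eqref{var_pb}.

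To conclude, the quadratic cost $\int_0^1\tfrac{1}{2}\lvert X_t\rvert_{\mathfrak g}^2+\tfrac{1}{2}\lvert v_t\rvert_V^2\,dt$ is literally the same expression in $J$ and $\tilde{J}$, and $\mathcal{D}(g_1,q_1)=\mathcal{D}(g_1,g_1\cdot\tilde{q}_1)$ by construction of $\tilde{q}_1$. Therefore $J(X,v)=\tilde{J}(X,v)$ for every admissible $(X,v)$, and the two problems have identical sets of minimizers. The step I expect to be most delicate is the careful bookkeeping of the differentiation of the $G$-action on $\mathcal{Q}$: one must distinguish the infinitesimal action of $\mathfrak{g}$ on the shape from the pushforward of tangent vectors under $\Phi_{g_t}$, and invoke the infinitesimal form of \eqref{comp_cond} at the correct point. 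The regularity hypotheses of \Cref{Action_Q} and \Cref{tilde_q_shape_space} ensure all these objects are well-defined and the chain rule applies in the Banach-manifold setting.
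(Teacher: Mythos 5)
Your proposal is correct and follows essentially the same route as the paper: both establish that under the change of variable $\tilde{q}_t=g_t^{-1}\cdot q_t$ the dynamics of \eqref{var_pb} transform into those of \eqref{var_pb2} (via the infinitesimal form of the compatibility condition \eqref{comp_cond}), so that $J(X,v)=\tilde{J}(X,v)$ and the minimizers coincide. Your version is in fact slightly cleaner, since you verify the equality of the functionals for every admissible control rather than only at minimizers as the paper's two-inequality argument does, and you spell out the differentiation of $q_t=g_t\cdot\tilde{q}_t$ that the paper leaves implicit.
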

\begin{proof}
Let \((X^*, v^*) \in L^2(I, \mathfrak{g} \times V)\) be minimizers of \(J\). From the dynamic equations associated to $J$ in \eqref{var_pb}, we can show that $ \tilde{q}_t = d_{\id} \rho_{g_t}(v_t^*) \cdot \tilde{q}_t $. Moreover, since $g_1^{-1} \cdot q_1 = \tilde{q}_1$, we have $J(X^*, v^*) = \tilde{J}(X^*, v^*) $ and thus  $\min J \geq \min \tilde{J}$. Similarly, we can prove the reverse inequality, which leads to the equality of the minimizers.
\end{proof}
In a same fashion, the minimizers of the energy are the geodesics of the Hamiltonian defined by
\[
\tilde{H}(g,\tilde{q},p^g,p',X,v) = (p^g \vert X g) + (p \vert d_{\id}\rho_g(v)\cdot\tilde{q})- \frac{1}{2} \vert X \vert^2_\mathfrak{g} - \frac{1}{2} \vert v \vert^2_V
\]
The reduced Hamiltonian associated is 
\begin{equation} \label{eq:hamilt_tilde_red}
\tilde{H}(g,\tilde{q},\tilde{p}^g,\tilde{p}) = \frac{1}{2}\vert K_{\mathfrak{g}} (d_{e_G}R_g)^*\tilde{p}^g\vert_\mathfrak{g}^2 + \frac{1}{2} \vert K_V(d_{\id}\rho_g)^*\xi_{\tilde{q}}^{*} \tilde{p}\vert_V^2
\end{equation}
We end this section by showing that the two Hamiltonian formulations, $H$ and $\tilde{H}$, are also equivalent.
\begin{prop}[Influence of the change of variable on covectors] \label{relation_moment}
    Let $\xi^\mathfrak{g}$ denote the infinitesimal action of $\mathfrak{g}$ on $\mathcal{Q}$. Let $(p^g_0,p_0),(\tilde{p}^g_0,\tilde{p}_0)\in T_{(e_G,q_S)}^*\tilde{\mathcal{Q}}$ be initial covectors such that $\tilde{p}^{g}_0 = p^{g}_0 + \xi_{q_S}^{\mathfrak{g}*}p_0$ and $\tilde{p}_0 = \big(\partial_q ( e_G \cdot q )\vert_{q=q_S}\big)^*p_0$. Let $(g_t,q_t)$ (resp. $(\tilde{g}_t,\tilde{q}_t)$) be the Hamiltonian flow of $H$ (resp. $\tilde{H}$). Then for all time $t$,
    \begin{equation}
     \left\{
        \begin{array}{l}
            \tilde{g}_t=g_t\\
            \tilde{q}_t= g_t\cdot q_t\\
            \tilde{p}_t = \big(\partial_q ( g_t\cdot q )\vert_{q=\tilde{q}_t}\big)^*p_t\\ 
            \tilde{p}^{g}_t = p^{g}_t + (d_{e_G}R_{g_t^{-1}})^* \xi_{q_t}^{\mathfrak{g}*}p_t
        \end{array} 
        \right.        
    \end{equation}
\end{prop}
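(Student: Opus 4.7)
The plan is to recognize the relations in the conclusion as defining the cotangent lift of the base diffeomorphism
\[
\Phi:\tilde{\mathcal{Q}}\to\tilde{\mathcal{Q}},\qquad (g,q)\longmapsto (g,g^{-1}\cdot q),
\]
and then exploit the general fact that cotangent lifts are automatic symplectomorphisms of the canonical weak symplectic form. Concretely I will introduce the map $\Psi:T^{*}\tilde{\mathcal{Q}}\to T^{*}\tilde{\mathcal{Q}}$ sending $(g,q,p^{g},p)$ to $(\tilde{g},\tilde{q},\tilde{p}^{g},\tilde{p})$ according to the four formulas given, rewritten in the form consistent with \eqref{goat_change_variable}, namely $\tilde{g}=g$, $\tilde{q}=g^{-1}\cdot q$. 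The goal reduces to checking that $\Psi$ is a symplectomorphism and that $H=\tilde{H}\circ\Psi$; since then the Hamiltonian flows of $H$ and $\tilde{H}$ are $\Psi$-related, and by hypothesis the initial conditions already match, the conclusion follows on every time interval on which both flows are defined.

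The first step is to make the lift explicit. Writing $q=g\cdot\tilde{q}$ and differentiating gives, for a tangent vector $(\delta g,\delta\tilde{q})$ at $(g,\tilde{q})$, the identity
\[
\delta q = \xi^{\mathfrak{g}}_{q}(\delta g\, g^{-1}) + \partial_{q}(g\cdot q)\big|_{q=\tilde{q}}\,\delta\tilde{q},
\]
which, by dualizing, gives exactly the two formulas $\tilde{p}=(\partial_{q}(g\cdot q)|_{q=\tilde{q}})^{*}p$ and $\tilde{p}^{g}=p^{g}+(d_{e_{G}}R_{g^{-1}})^{*}\xi^{\mathfrak{g}*}_{q}p$ that appear in the proposition. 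This identifies the four formulas as the canonical lift of $\Phi$, hence symplectic by the standard construction (cf.\ \cite{marsden1994introduction}).

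Next I would verify the identity $H=\tilde{H}\circ\Psi$. Substituting the momentum relations into \eqref{hamilt_1} and using the compatibility condition \eqref{comp_hyp}–\eqref{comp_cond} to relate the infinitesimal action $\xi_{q}(v)=\partial_{q}(g\cdot q)|_{\tilde{q}}\,\xi_{\tilde{q}}(d_{\id}\rho_{g}v)$, the contribution $\xi_{q}^{\mathfrak{g}*}p$ appearing in the $G$-momentum of $H$ is absorbed into $\tilde{p}^{g}$, so the first quadratic term of $H$ becomes $\frac{1}{2}|K_{\mathfrak{g}}(d_{e_{G}}R_{g})^{*}\tilde{p}^{g}|_{\mathfrak{g}}^{2}$, while the diffeomorphism part rewrites as $\frac{1}{2}|K_{V}(d_{\id}\rho_{g})^{*}\xi^{*}_{\tilde{q}}\tilde{p}|_{V}^{2}$, matching \eqref{eq:hamilt_tilde_red}. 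Because $\Psi$ is a symplectomorphism and the two Hamiltonians agree under it, their Hamiltonian vector fields are $\Psi$-related, which, combined with the matching initial conditions and the uniqueness part of Proposition~\ref{critical_point_H}, yields the equalities of the proposition for all $t$.

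The main obstacle is the verification that $H\circ\Psi^{-1}=\tilde{H}$ in the infinite-dimensional setting: one must handle the transpose of the infinitesimal action $\xi^{\mathfrak{g}}_{q}$ carefully, since it involves both the derivative of the $G$-action on $\mathcal{Q}$ and the compatibility equation linking $\rho_{g}$ to the action on $\R^{d}$. An alternative, more computational route would be to bypass the symplectic argument and directly differentiate $\tilde{q}_{t}=g_{t}^{-1}\cdot q_{t}$ and the proposed formulas for $\tilde{p}_{t},\tilde{p}^{g}_{t}$, then check term-by-term that the resulting ODE coincides with $\nabla^{\omega}\tilde{H}$; this approach avoids symplectic geometry but replaces the conceptual step by a longer Leibniz-rule computation, which is precisely where the bookkeeping of the $\xi^{\mathfrak{g}*}_{q}p$ correction becomes delicate.
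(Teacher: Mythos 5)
Your route is genuinely different from the paper's. The paper proves this proposition in Appendix~\ref{proof_moments} by first showing that both $H$ and $\tilde{H}$ are induced by the right-invariant structure on $\mathcal{G}^k$ together with two different actions on $\tilde{\mathcal{Q}}$, so that the associated momenta $m_t$ and $\tilde{m}_t$ both obey the integrated Euler--Poincar\'e equation $m_t=\operatorname{Ad}^{*}_{(g_t,\varphi_t)^{-1}}(m_0)$ and hence coincide for all $t$ once they coincide at $t=0$. That argument only yields $\xi_{q_t}^{*}p_t=\xi_{q_t}^{*}\partial_2A(g_t^{-1},q_t)^{*}\tilde{p}_t$, and since $\xi_{q_t}^{*}$ is not injective the authors cannot cancel it; they therefore finish with exactly the ``longer Leibniz-rule computation'' you relegate to a fallback, showing that $\tilde{p}_t$ and $\partial_2A(g_t,\tilde{q}_t)^{*}p_t$ solve the same linear Cauchy problem and invoking Picard--Lindel\"of. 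Your argument, if valid, is shorter and more conceptual, and your two key verifications do check out on my computation: the four formulas are obtained by dualizing $d_{(g,\tilde{q})}\Phi_0^{-1}(\delta g,\delta\tilde{q})=(\delta g,\ \xi^{\mathfrak{g}}_{q}(\delta g\,g^{-1})+\partial_2A(g,\tilde{q})\delta\tilde{q})$ for $\Phi_0(g,q)=(g,g^{-1}\cdot q)$, and $H=\tilde{H}\circ\Psi$ follows from $(d_{e_G}R_g)^{*}(d_gR_{g^{-1}})^{*}=\id_{\mathfrak{g}^{*}}$ together with the compatibility identity $\xi_q(v)=\partial_2A(g,\tilde{q})\,\xi_{\tilde{q}}(d_{\id}\rho_g v)$.

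There is nonetheless one point you must confront before this can stand: the paper closes its appendix proof with a remark asserting that the map $\Phi:(g,q,p^{\mathfrak{g}},p)\mapsto\bigl(g,\,g^{-1}\cdot q,\,p^{\mathfrak{g}}+(d_{e_G}R_{g^{-1}})^{*}\xi_q^{\mathfrak{g}*}p,\,\partial_2A(g,g^{-1}\cdot q)^{*}p\bigr)$ --- which is precisely your $\Psi$ --- is \emph{not} a symplectomorphism. Your entire proof rests on the opposite claim, asserted in one line as ``symplectic by the standard construction.'' I believe the remark is in error (a cotangent lift preserves the tautological one-form by a pointwise computation valid in the Banach setting, and $\Psi$ is exactly the lift of $\Phi_0$ once the mixing of the $p^{g}$ and $p$ components through $\partial_1A$ is accounted for), but you cannot leave this implicit when the source explicitly denies it: you must write out the identification of $\Psi$ with $T^{*}\Phi_0$ in full and verify $\Psi^{*}\theta=\theta$ directly, since that is where the whole weight of the argument sits. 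You should also state explicitly that the proposition's $\tilde{q}_t=g_t\cdot q_t$ must be read as $q_t=g_t\cdot\tilde{q}_t$ (consistently with \eqref{goat_change_variable} and needed for the third formula to type-check), which you did silently. With those two points made explicit, your proof is a legitimate and cleaner alternative to the paper's momentum-map-plus-ODE argument.
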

\begin{proof}
    The proof is given in appendix \ref{proof_moments}
\end{proof}

\subsubsection{Choice of the data attachment term}

In a registration task, the purpose is to match a template shape $q_S$ onto a target shape $q_T$. A classic data attachment term is a real valued mapping defined over the shape space $\mathcal{Q}$. However, in this framework we consider an enriched shape space $\tilde{\mathcal{Q}} = G \times Q$ so that the data attachment term is defined over both spaces. Covectors are related to the data attachment term by the relation (cf. Theorem \ref{app:critic_points}) \[(p^{g}_1,p_1)=-d\mathcal{D}(g_1,q_1),\] so that the choice of the data attachment term will have consequences on the expression of the geodesics. As detailed below, the data attachment term $\mathcal{D} : \tilde{\mathcal{Q}} \to \R$ can be defined via the data attachment terms on $G$ and $\mathcal{Q}$. 
%The choice of the data attachment term on shapes $\mathcal{D}^{\mathcal{Q}}$ depends on the studied shape, for example, varifolds \cite{Charon_2013}, landmarks, curves or surfaces, $L^2$ for images, etc.
The choice of data attachment term on $\mathcal{Q}$ depends also on the studied shape space, and is denoted by $\mathcal{D}^{\mathcal{Q}}$. For example, we can choose varifolds \cite{Charon_2013} for landmarks, curves or surfaces, or $L^2$ for images.
As introduced earlier in this section, the finite-dimensional group $G$ represents an information on the shape at a coarse scale, as the orientation or the position of the shape. Consequently, a possible choice of data attachment term $\mathcal{D}^G : G \to \R$ is $\mathcal{D}^G(g) = \mathcal{D}^{\mathcal{Q}}(g\cdot q_S)$. In other words, $\mathcal{D}^G$ only measures the deformation of $G$ to match the source onto the target. We propose the following ideas for the choice of the general term $\mathcal{D}$ :

\begin{itemize}

    \item A natural choice for the data attachment term is to consider one that depends only on the shape $\mathcal{D}(g,q)=\mathcal{D}(q)$ since it is the object we want to match. It follows that $p^{g}_1 = 0$ which implies that $p^{g}_t=0$ for every time $t \in [0,1]$ (or equivalently, after change of variable \eqref{goat_change_variable}, we get for all $t$, that $\tilde{p}_t^g$ is a variable depending on $g_t,\tilde{p}_t,\tilde{q}_t$). This means that we can get rid of the covector $p^g_t$, and that the dynamic of $g_t$ and of $q_t$ is only given by $p_t$ : we do not obtain in that case a decoupling of the deformations.

    \item Another possible choice is to split the data attachment term in two separate data attachment terms, one for the group element and one for the shape $\mathcal{D}(g,q)=\mathcal{D}^G(g) + \mathcal{D}^Q(q)$. Then, $(p_1^{g},p_1)=-(d \mathcal{D}^G(g_1),d\mathcal{D}^G(q_1))$ and the covectors at time $t=1$ are not directly related.

    \item Considering the shape $\tilde{q}= g^{-1} \cdot q$, we can consider a data attachment term that mixes the group element and the shape : $\mathcal{D}(g,\tilde{q})=\mathcal{D}^G(g) + \mathcal{D}^{\mathcal{Q}}(g\cdot \tilde{q})$. Thanks to the change of variable, the data attachment term simplifies to $\mathcal{D}(g,\tilde{q}) = \mathcal{D}^G(g) + \mathcal{D}^{\mathcal{Q}}(q)$.

    \item The data attachment can also be invariant under the action of G, i.e $\mathcal{D}(g,g\cdot \tilde{q})=\mathcal{D}(g,\tilde{q})=\mathcal{D}^G(g) + \mathcal{D}^Q(\tilde{q})$

\end{itemize}

\subsubsection{Invariance and symplectic reduction} \label{reduction}
In this section, we use symmetries in the Hamiltonian in order to decouple the action of $G$ and of the group of diffeomorphisms on shapes. In many case, the deformation induced by the finite dimensional Lie group $G$ can be performed by diffeomorphisms, and therefore we want to avoid the learning by diffeomorphisms of the motion that could be performed only by $G$. We suppose in all this section that the space $V$ is $G$-invariant, meaning that for all $g\in G$, the mapping $T_{\id}\rho_g$ preserves $V$ and is an isometry. This induces symmetries in the Hamiltonian and allows to perform symplectic reduction as in  \cite{MARSDEN1974121,marsden1994introduction,Satzer1977}. Some of the necessary definitions and tools used in this part are also more detailed in the appendix \ref{App:symplecti_geom}.

Let first recall how the symplectic reduction can be performed on the shape space $\mathcal{Q}$ in our setting and some properties of the Marsden-Weinstein symplectic quotient before extending this framework to the augmented shape space $\tilde{\mathcal{Q}} = G \times \mathcal{Q}$. The action of the Lie group G on $\mathcal{Q}$ can be lifted by symplectomorphisms (definition \ref{def:symplecto}) on an action of $G$ on $T^*\mathcal{Q}$ given by
\[
g\cdot (q,p) = (g\cdot q,g^{-1}\cdot^*p)
\]
meaning that for any $h\in T_{g\cdot q}\mathcal{Q}$, we have
\[
\left(g^{-1}\cdot^*p\mid h\right)=\left(p\mid g^{-1}\cdot h\right)
\]
This action is actually a Hamiltonian action (cf. appendix \ref{App:Ham_action} for more details on Hamiltonian action). Indeed, by denoting $\Gamma(T^*\mathcal{Q})$ the space of smooth sections of the cotangent bundle $T^*\mathcal{Q}$, we introduce the infinitesimal action 
\[
\xi^{G}:\app{\mathfrak{g}}{\Gamma(T^*\mathcal{Q})}{X}{\left((q,p)\mapsto  \frac{\partial}{\partial g}\big(g\cdot (q,p)\big)_{|g=e}X\right)},
\] 
and define the mapping $\hat{\mu}:\mathfrak{g}\to C^\infty(T^*\mathcal{Q},\R)$ by 
\begin{equation*}
\hat{\mu}(X)(q,p)= (p\,|\, X\cdot q) .  
\end{equation*} 
The mapping $\hat{\mu}$ defines for any $X\in \mathfrak{g}$, a Hamiltonian function $\hat{\mu}(X) : T^*\mathcal{Q}\to \R$, and the infinitesimal action $\xi^G(X)\in\Gamma(T^*\mathcal{Q})$ gives its Hamiltonian vector field, i.e. 
\[
\nabla^\omega \hat{\mu}(X) = \xi^G(X).
\]
The mapping $\hat{\mu}$ gives also rise to a momentum map $\mu:T^*\mathcal{Q}\to\mathfrak{g}^*$ defined by
$
(\mu(q,p)\,|\, X)= \hat{\mu}(X)(q,p)$.
Moreover, this momentum map is also $G$-equivariant since we have  
\begin{align*}
  (\mu(g\cdot(q,p))\,|\, X) &= \left(p\,|\, g^{-1}\cdot (X\cdot (g\cdot q)) \right) \\
  &= \left(p \,|\, \operatorname{Ad}_{g^{-1}}(X)\cdot q \right) \\
  &= (\operatorname{Ad}_{g^{-1}}^* \mu(q,p)\,|\, X)
\end{align*}
Consequently, the action of $G$ on $T^*Q$ is an Hamiltonian action.
In addition, if we assume that the action of $G$ on $\mathcal{Q}$ is proper and free, we can consider the Marsden–Weinstein quotient $\mu^{-1}(0)/G$ (also sometimes denoted by $T^*\mathcal{Q}//G$) which is a symplectic manifold equipped with the induced weak symplectic form. The notation $T^*\mathcal{Q}//G$ means that to quotient out by the action of $G$, we must follow a 2-step reduction process : we first restrict from $T^* \mathcal{Q}$ to $\mu^{-1}(0)$ and we reduce again via the quotient $\mu^{-1}(0)/G$. Note that in this particular case, we get an identification \cite[Th. 14]{Satzer1977}
\[\mu^{-1}(0)/G \simeq T^*(Q/G).\]
 Moreover, if $V$ is $G$-invariant, then the Hamiltonian $H$ is also $G$-invariant, and therefore, by Noether's theorem, the momentum is conserved on the flow of $H$. \\

We now adapt this framework to the shape space $G \times \mathcal{Q}$. In this setting, we consider that the group $G$ acts only on the second component of $G\times\mathcal{Q}$ as follows
\[
g\cdot (g',q)= (g',g\cdot q)
\]
and we perform, in the same fashion, a symplectic reduction with regards to this action. This action of $G$ on $G \times \mathcal{Q}$ can be lifted by symplectomorphism on $T^* G \oplus T^* \mathcal{Q}$, and will lead to the same momentum map and reduction process. Under the same hypothesis (free and proper action of $G$ on $\mathcal{Q}$), we can deduce that the space 
\[
T^*G\oplus \mu^{-1}(0)/G 
\] defines a symplectic manifold. As stated in the following proposition, a first consequence is that, since the Hamiltonian $H : T^* G \oplus T^* Q \to \R$ defined in \eqref{hamilt_1} is invariant by the action of $G$, it can be reduced to an Hamiltonian on the reduced space $T^*G \oplus \mu^{-1}(0)/G$.

\begin{prop}[Projection of the hamiltonian flow \cite{MARSDEN1974121}]
Let $i : T^*G \oplus \mu^{-1}(0) \hookrightarrow T^*G \oplus T^*\mathcal{Q}$ be the inclusion map and $\pi : T^*G\oplus \mu^{-1}(0)\to T^*G\oplus \mu^{-1}(0)/G$ denote the canonical projection. 
If the Hamiltonian $H$ is invariant under the action of $G$, then the Hamiltonian flow on $T^*G \oplus \mu^{-1}(0)$ associated to the Hamiltonian $H$ induces an Hamiltonian flow on $T^*G \oplus \mu^{-1}(0)/G$ whose Hamiltonian $\bar{H}$ is defined by $\bar{H} \circ \pi = H \circ i$.
\end{prop}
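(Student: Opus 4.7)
The plan is to recognize the statement as an application of the Marsden--Weinstein symplectic reduction to the extended symplectic manifold $T^*G \oplus T^*\mathcal{Q}$, where $G$ acts trivially on the first factor and by the lifted action on the second. The appendix on symplectic geometry referenced in the paper presumably contains the abstract version; the work here is to assemble the hypotheses and transport the conclusion.

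First I would exhibit the Hamiltonian structure of the $G$-action on the extended phase space. Because the action is trivial on $T^*G$, the total momentum map is simply $\tilde{\mu}(\alpha_g, (q,p)) = \mu(q,p)$, inheriting $G$-equivariance from $\mu$ as established earlier in the subsection. Consequently $T^*G \oplus \mu^{-1}(0) = \tilde{\mu}^{-1}(0)$, and under the assumption that $G$ acts freely and properly on $\mathcal{Q}$, the same holds for the $G$-action on $\tilde{\mu}^{-1}(0)$; hence $\tilde{\mu}^{-1}(0)/G \simeq T^*G \oplus \mu^{-1}(0)/G$ is a smooth symplectic manifold equipped with the unique (weak) symplectic form $\bar{\omega}$ characterized by $\pi^*\bar{\omega} = i^*\omega$, where $\omega$ is the canonical symplectic form on $T^*G \oplus T^*\mathcal{Q}$.

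Next I would argue that the Hamiltonian flow of $H$ on the full phase space preserves the level set $\tilde{\mu}^{-1}(0)$ and descends to the quotient. By Noether's theorem applied to the $G$-invariance hypothesis on $H$, the components $\hat{\mu}(X)$ are first integrals, so $\tilde{\mu}$ is conserved along trajectories; in particular $T^*G \oplus \mu^{-1}(0)$ is flow-invariant, which justifies pulling back by $i$. The $G$-invariance of $H$ further implies that the Hamiltonian vector field $\nabla^\omega H$ is $G$-equivariant on this invariant set, so it projects under $\pi$ to a well-defined vector field on the quotient. Since $H$ is constant on $G$-orbits inside $\tilde{\mu}^{-1}(0)$, the formula $\bar{H}\circ\pi = H \circ i$ defines $\bar{H}$ unambiguously on $T^*G \oplus \mu^{-1}(0)/G$.

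Finally I would verify that this projected vector field is precisely $\nabla^{\bar{\omega}} \bar{H}$. The computation is short once the previous steps are in place: for any tangent vector $\bar{v}$ at a point of the quotient, lift it to $v$ tangent to $\tilde{\mu}^{-1}(0)$, so that $\bar{\omega}(\pi_* \nabla^\omega H, \bar{v}) = i^*\omega(\nabla^\omega H, v) = dH(v) = d(\bar{H}\circ\pi)(v) = d\bar{H}(\bar{v})$, using the defining identity of the reduced form and the chain rule. The main obstacle I anticipate is not the algebra but the functional-analytic care needed because $T^*\mathcal{Q}$ carries only a weak symplectic form on a Banach manifold: one needs to ensure that the reduced form $\bar{\omega}$ is well defined and that the Hamiltonian vector field $\nabla^{\bar{\omega}}\bar{H}$ exists, i.e.~that $d\bar{H}$ lies in the image of the flat operator associated to $\bar{\omega}$. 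This existence is inherited from the existence of $\nabla^\omega H$ upstairs combined with $G$-equivariance, but it is the point where the proof is least formal and must be handled with the conventions adopted in the appendix.
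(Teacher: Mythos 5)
Your proposal is correct and follows exactly the classical Marsden--Weinstein reduction-of-dynamics argument that the paper itself does not reprove but delegates to the citation \cite{MARSDEN1974121}: the paper offers no proof of this proposition, so your reconstruction (trivial extension of the momentum map to the $T^*G$ factor, Noether invariance of the zero level set, $G$-equivariance of $\nabla^\omega H$, and the verification $\pi^*\bar{\omega}(\pi_*\nabla^\omega H,\cdot)=d\bar{H}\circ d\pi$) is precisely the intended argument. Your closing caveat about the weak symplectic form on the Banach manifold $T^*\mathcal{Q}$ is well placed, since that is the one point where the finite-dimensional theorem recalled in the paper's appendix does not apply verbatim.
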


Furthermore, since we get the isomorphism 
\[T^*G \oplus \mu^{-1}(0)/G \simeq T^*G \oplus T^*(\mathcal{Q}/G),\] the Hamiltonian flow of $H$ on $T^*G \oplus \mu^{-1}(0)$ can be projected on an Hamiltonian flow on $T^*G \oplus T^*(\mathcal{Q}/G)$ which can be interpreted as a decoupling between the diffeomorphism part and the action of $G$.

\paragraph{Variational problem and reduction.}We finish this part  by linking this reduction process with the variational problems we are interested in. First, since $V$ is $G$-invariant, for any $v \in V$ we have that $\lvert v\rvert_V = \lvert d_{\id}\rho_{g} v\rvert_V^2$ for all $g\in G$. Therefore the functional \eqref{var_pb2} that we want to minimize becomes :
\[
J(X,v) = \int_0^1 \lvert X_t \rvert_\mathfrak{g}^2 + \lvert v_t\rvert_V^2 \,dt+ \mathcal{D}(g_1,g_1\cdot \tilde{q}_1).
\] 
We start by supposing the the data attachment term $\mathcal{D} : G \times \mathcal{Q} \to \R$ is $G$-invariant with regards to the second variable, i.e. $\forall g'\in G, \  \mathcal{D}(g,g'\cdot q)=\mathcal{D}(g,q)$. In particular the mapping $\mathcal{D}$ induces a reduced data attachment term on $\bar{\mathcal{Q}}=G\times \mathcal{Q}/G$ with associated equivalence class $[q]$, that we denote $\bar{\mathcal{D}}$. In this case minimizing $J$ becomes equivalent to minimizing
\[
\bar{J}(X,v) = \int_0^1 \lvert X_t \rvert_\mathfrak{g}^2 + \lvert v_t\rvert_V^2 \, dt+ \bar{\mathcal{D}}(g_1,[\tilde{q}_1]),
\] 
that is to perform the registration directly in the space $G\times\mathcal{Q}/G$, leading to the reduced hamiltonian flow in $T^*G\oplus T^*(\mathcal{Q}/G)$. This quotient shows that the decoupling yields a more consistent parameterization of vector fields by covectors, ensuring a clear separation of each deformation mode in the matching process and preventing operlapping effect. 

However, the asumption of a data attachment term $\mathcal{D}$ being $G$-invariant is a bit strong in practice. A first idea would be to replace the term $\mathcal{D}$ by the $G$-invariant term
\[
\Bar{\mathcal{{D}}}(\tilde{q}) = \inf_{g\in G} \mathcal{D}(g_1,g\cdot\tilde{q})
\]
This term could be computationally demanding, and we might want to keep $\mathcal{D}$ in the optimization. Nonetheless, by \cite{ARGUILLERE2015139}, minimizing $J$ is equivalent  to minimizing
    \[
    \tilde{J}_2(p^g_0,p_0) = \tilde{H}(e_G,q_S,p^g_0,p_0) + \mathcal{D}\left(g_1,\tilde{q_1}\right)
    \] where we shoot from $(e_G,q_S)$ by the initial covectors $(p^g_0,p_0)$. Moreover, since $V$ is $G$-invariant, we recall that the Hamiltonian can be rewritten as a sum of Hamiltonian functions on $T^*G$ and $T^*\mathcal{Q}$ 
    \[
    \tilde{H}(g,\tilde{q},p^g,p) = H^G(g,p^g) + H^V(\tilde{q},p).
    \] where $H^G(g,p^g)=\frac{1}{2} \vert K_V\xi_{\tilde{q}}^* p\vert_V^2$ and $H^V(\tilde{q},p)=\frac{1}{2} \vert K_{\mathfrak{g}}d_{e_G}R_g^*p^g\vert^2_{\mathfrak{g}}$. Therefore, in this setting, we can still restrict to initial covectors $p_0$ in $\mu^{-1}(0)$ in order to get hamiltonian flow that descends to the reduced space.

..\section{Application to rigid and diffeomorphic motions}\label{Sec:Rigid+diffeo}
In this section, we focus on the example of the rigid motions represented by the group of isometries and its action on space of curves. 

\subsection{The semidirect product of isometries and diffeomorphisms}

The group of isometries of $\R^d$ is a finite-dimensional Lie group particularly interesting in the computational anatomy framework since they represent classic and simple deformations. We denote $\Isom(\R^d) = \operatorname{SO}_d \ltimes \R^d$ the group of isometries and its associated Lie algebra $\mathfrak{isom}_d = \operatorname{Skew}_d \oplus \R^d$. First, we can notice that the group of isometries acts on $\R^d$ smoothly via diffeomorphisms and properly by $(R,T) \cdot x = Rx +T$. Moreover, the action of the group $\Isom(\R^d)$ on $\Diff_{C_0^k}(\R^d)$ defined by
\begin{equation}
        \rho_{(R,T)}(\varphi)(x)= R^{\top}\varphi(Rx+T)-R^{\top}T , \ x \in \R^d
\end{equation}
satisfies the assumptions of the framework presented in the subsection \ref{first_def}.
Then, we can consider the semidirect product $\operatorname{Isom}(\R^d) \ltimes \Diff_{C_0^k}(\R^d)$, which is a Banach half-Lie group, with group operations

\begin{equation}
  \left\{
    \begin{array}{l}
        (R',T',\varphi')(R,T,\varphi) = (R'R,R'T+T',R^{\top}\varphi'(R \varphi  +T)-R^{\top}T) \\
        (R,T,\varphi)^{-1}=(R^{\top},-R^{\top} T,R \varphi^{-1}(R^{-1}-R^{\top}T)+T)
         \end{array} 
    \right.
    \end{equation}

Given $(A,\tau,u) \in L^2([0,1],\mathfrak{isom}_d \oplus C_0^k(\R^d,\R^d))$, a flow in   $\operatorname{Isom}(\R^d) \ltimes \Diff_{C_0^k}(\R^d)$ is defined by :

\begin{equation}
    \left\{
        \begin{array}{ll}
            \dot{R}_t = A_tR_t \\
            \dot{T}_t = \tau_t + A_tT_t\\
            \dot{\varphi}_t=R_T^{\top} u_t(R_t\varphi_t + T_t) \\
            (R_0,T_0,\varphi_0)=(I_d,0,\id)
        \end{array} 
        \right.\notag
\end{equation}

\subsection{The shape space of curves}

For $k\geq 1$, let $D$ be either the interval $[0,1]$ for open curves or the unit circle $\mathbb{S}^1$ for closed curves. The space of  $C^k$-immersions in $\mathbb{R}^d$,

\begin{equation*}
    \operatorname{Imm}_{C^k}(D,\mathbb{R}^d) = \{ q \in C^{k}(D,\mathbb{R}^d) \mid q'(t)  \ne 0 \text{ for all } t \in D\}
\end{equation*}
is an open set of the Banach space $C^k(D,\mathbb{R}^d)$ which induces a manifold structure with tangent space $T_{q} \operatorname{Imm}_{C^k}(D,\mathbb{R}^d) \simeq C^k(D,\mathbb{R}^d)$, the space of $C^k$ vector fields along $q$. Reparameterization of curves arises from the action of the group of orientation-preserving diffeomorphism $\operatorname{Diff}^{+}(D)$ on the space of immersions :
\begin{equation*}
    \app{ \operatorname{Diff}^{+}(D) \times \operatorname{Imm}_{C^k}(D,\R^d) }{\operatorname{Imm}_{C^k}(D,\R^d)}{(\gamma,q)}{q\circ \gamma}
\end{equation*}
The space of curves that we would originally consider is the space of immersions modulo reparameterizations $\mathcal{S}(D,\R^d) = \operatorname{Imm}_{C^k}(D, \R^d) / \Diff^+(D)$. However since we will use a varifold data attachment term, which is invariant by reparameterization, we can identify the space of curves as the space of $C^k$-immersions . The group of diffeomorphism $\Diff_{C_0^k}(\R^d)$ acts on $\operatorname{Imm}_{C^k}(D,\R^d) $ by left composition $(\varphi,q) \mapsto \varphi \circ q $ with infinitesimal action $\xi : (X,q) \mapsto X \circ q$ which makes $\operatorname{Imm}_{C^k}(D,\R^d)$ a shape space in the sense of \cite{general_setting}. Moreover, the group of isometries acts on curves via 
\begin{equation*}
    (R,T) \cdot q  = Rq +T
\end{equation*}
which allows to define a shape space structure on curves with respect to the semidirect product of isometries by diffeomorphisms as stated in the following proposition. 
\begin{prop}[Shape space $\operatorname{Isom}(\R^d) \times \operatorname{Imm}_{C^k}(D,\R^d)$]
    The action of $\operatorname{Isom}(\R^d)$  on $\operatorname{Imm}_{C^k}(D,\R^d)$ 
    is a smooth action via diffeomorphisms and satisfies the compatibility condition \label{Ex:comp_hyp}
    \begin{equation*}
        \varphi \cdot ((R,T)\cdot q) = (R,T) \cdot (\rho_{(R,T)}(\varphi) \cdot q)
    \end{equation*}
Consequently, $\operatorname{Isom}(\R^d) \ltimes \operatorname{Diff}_{C_0^k}(\R^d)$ induces a shape space structure on $\operatorname{Isom}(\R^d) \times \operatorname{Imm}_{C^k}(D,\R^d)$.
\end{prop}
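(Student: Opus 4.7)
The plan is to invoke Proposition \ref{tilde_q_shape_space} for $G=\operatorname{Isom}(\R^d)$, $\mathcal{Q}=\operatorname{Imm}_{C^k}(D,\R^d)$, and the action of $\operatorname{Diff}_{C_0^k}(\R^d)$ by left composition. Its hypotheses split into three: the regularity of the $\operatorname{Diff}_{C_0^k}(\R^d)$-action on $\operatorname{Imm}_{C^k}(D,\R^d)$, the smoothness via diffeomorphisms of the $\operatorname{Isom}(\R^d)$-action, and the compatibility condition \eqref{comp_cond}. Once all three are verified, the conclusion follows immediately from the abstract proposition.

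The first hypothesis is classical: the map $(\varphi,q)\mapsto\varphi\circ q$ gives a shape space action of $\operatorname{Diff}_{C_0^{k+l}}(\R^d)$ on $\operatorname{Imm}_{C^k}(D,\R^d)$ with $C^l$ regularity (see \cite{general_setting, ArguillereThesis}), since $\operatorname{Imm}_{C^k}(D,\R^d)$ is an open subset of $C^k(D,\R^d)$ preserved by precomposition with a diffeomorphism. For the $\operatorname{Isom}(\R^d)$-action, I observe that $((R,T),q)\mapsto Rq+T$ is polynomial in $(R,T)$ and continuous-affine in $q$, hence jointly smooth; for each fixed $(R,T)$ it is an affine continuous automorphism of $C^k(D,\R^d)$ with smooth inverse $q\mapsto R^\top q-R^\top T$. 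Moreover it preserves the open set of immersions since $(Rq+T)'=Rq'$ vanishes exactly where $q'$ does, because $R$ is invertible. Thus the action is by smooth diffeomorphisms.

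The compatibility condition is then a pointwise computation using the definition of $\rho_{(R,T)}$:
\[
(R,T)\cdot\bigl(\rho_{(R,T)}(\varphi)\cdot q\bigr) = R\bigl(R^\top\varphi(Rq+T)-R^\top T\bigr)+T = \varphi\circ(Rq+T) = \varphi\cdot\bigl((R,T)\cdot q\bigr),
\]
which is exactly \eqref{comp_cond}. Morally, this is the curve-level echo of condition \eqref{comp_hyp} on $\R^d$: since the $\operatorname{Isom}(\R^d)$-action on immersions is defined pointwise through its action on $\R^d$, the shape-space compatibility reduces to the ambient compatibility built into the definition of $\rho$.

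No step presents a genuine obstacle; each reduces either to classical LDDMM regularity for curves or to a short finite-dimensional smoothness check. The only subtlety worth stating is that the invertibility of $R$ is precisely what keeps $q\mapsto Rq+T$ within the open set of immersions, ensuring the $\operatorname{Isom}(\R^d)$-action is well-defined on $\operatorname{Imm}_{C^k}(D,\R^d)$ rather than merely on $C^k(D,\R^d)$.
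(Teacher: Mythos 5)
Your proof is correct and follows essentially the same route as the paper: verify smoothness and invertibility of the isometry action directly from its affine expression, check the compatibility condition by the pointwise computation $R\bigl(R^\top\varphi(Rq+T)-R^\top T\bigr)+T=\varphi(Rq+T)$, and conclude via Proposition \ref{tilde_q_shape_space}. You merely spell out in more detail the hypotheses the paper leaves implicit (the regularity of the left-composition action of $\Diff_{C_0^k}(\R^d)$ and the preservation of the open set of immersions); the only nitpick is that $\varphi\circ q$ is post-composition of $q$ with $\varphi$, not precomposition, though your argument clearly intends the right thing.
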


\begin{proof}
    The smoothness and the inversibility of the action can be deduced directly from its expression. 
    Using the definitions of the differents actions, the left and right hand side of the compatibility condition are equal. Indeed, on one hand we have 
    \begin{equation*}
        \varphi \cdot ((R,T)\cdot q)  = \varphi(Rq + T)
    \end{equation*}
and on the other hand,
    \begin{eqnarray*}
        (R,T) \cdot (\rho_{(R,T)}(\varphi) \cdot q) &=& (R,T)\cdot (R^{\top} \varphi(Rq+T)-R^{\top}T)\\
        &=& R(R^{\top} \varphi(Rq+T)-R^{\top}T) + T \\
        &=& \varphi(Rq+T).
    \end{eqnarray*}
    Finally, it follows  from proposition \ref{tilde_q_shape_space} that $\operatorname{Isom}(\R^d) \times \operatorname{Imm}_{C^k}(D,\R^d)$  is a shape space.
\end{proof}
In particular, the action of $\operatorname{Isom}(\R^d) \ltimes \operatorname{Diff}_{C_0^k}(D,\R^d)$ on $\operatorname{Isom}(\R^d) \times \operatorname{Imm}_{C^k}(D,\R^d)$ is defined by 
\begin{equation*}
 (R,T,\varphi) \cdot (R',T',q)  = (RR', RT'+T, R(\varphi \circ q) + T)
\end{equation*}
and leads to the following infinitesimal action, for $(A,\tau,u) \in \mathfrak{isom}_d \oplus  C_0^k(\R^d,\R^d)$:
\begin{equation*}
        (A,\tau,u) \cdot (R,T,q)  = (AR, AT+\tau, Aq + \tau + u\circ q)
\end{equation*}

\subsection{Matching problem and reduction}
Let $V \hookrightarrow C_0^{k+2}(\R^d,\R^d)$ be a RKHS induced by the gaussian kernel $k_{\sigma}(x,y)=\exp(-\frac{\Vert y -x\Vert^2}{2\sigma^2})$, i.e the Hilbert space defined by the completion of the vector space spanned by the functions $k_{\sigma}(x,\cdot) : y \mapsto k_{\sigma}(x,y)$ \cite{glaunes2005transport}. Let also $q_S\in \operatorname{Imm}_{C^k}(D,\R^d)$ be a source curve that we want to match to a target curve $q_T$. We consider the following minimization problem  

\begin{eqnarray}
    \label{energy_isom}
    \inf_{(A,\tau,v)} J(A,\tau,v) &=& \int_0^1 \frac{1}{2} \vert A_t \vert^2 + \frac{1}{2} \vert \tau_t \vert^2 + \frac{1}{2} \vert v_t \vert^2 \, dt 
+ \mathcal{D}((R_1,T_1), R_1 \tilde{q}_1 + T_1) \\
     \text{ s.t }& &     \left\{
        \begin{array}{l}
            \dot{R}_t = A_t R_t \\
            \dot{T}_t = A_t T_t + \tau_t \\
            \dot{\tilde{q}}_t = R_t^{-1} v_t(R_t \tilde{q}_t + T_t) \\
            (R_0,T_0,\tilde{q}_0) = (I_d,0,q_S)        
        \end{array} 
        \right.\notag
\end{eqnarray}
where  $\tilde{q} = R^{-1}(q-T) $. This allows to perform a registration between the source curve $q_S$ and the target, taking into account both rigid and non rigid deformations.

\begin{prop}[Hamiltonian associated with \eqref{energy_isom}]

The hamiltonian associated with the variational problem \eqref{energy_isom} is 

\begin{equation}
H(R,T,\tilde{q},\tilde{p}^A,\tilde{p}^{\tau},\tilde{p}) = \frac{1}{2} (\vert \tilde{p}^A R^{\top} + \tilde{p}^{\tau} T^{\top}\vert^2  + \vert \tilde{p}^{\tau} \vert^2 + \vert  K_V \xi_{\tilde{q}}^* \tilde{p}\vert^2)
\end{equation}
 and the critical points of the energy satisfy the geodesic equations.
\end{prop}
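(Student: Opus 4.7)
The approach is to specialize the general reduced Hamiltonian given by Equation \eqref{eq:hamilt_tilde_red} to the present setting, where $G=\operatorname{Isom}(\R^d)$ acts on curves and $V$ is the RKHS of a scalar Gaussian kernel, and then to invoke Proposition \ref{critical_point_H} to identify critical points of the energy with the Hamiltonian flow.

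First, I would compute the right trivialization on $G$. From the product $(R',T')(R,T)=(R'R,R'T+T')$ on $\operatorname{Isom}(\R^d)$, the differential at the identity is
\[
d_{(I_d,0)}R_{(R,T)}(A,\tau)=(AR,\,AT+\tau).
\]
Identifying $\mathfrak{isom}_d$ with $\mathfrak{isom}_d^{*}$ through the Frobenius and Euclidean pairings (so that $K_{\mathfrak{g}}$ reduces to the identity, modulo the implicit skew-symmetric projection on the $\mathfrak{so}_d$ factor), dualization yields
\[
(d_{(I_d,0)}R_{(R,T)})^{*}(\tilde p^{A},\tilde p^{\tau})=(\tilde p^{A}R^{\top}+\tilde p^{\tau}T^{\top},\,\tilde p^{\tau}).
\]
This already produces the two terms $|\tilde p^{A}R^{\top}+\tilde p^{\tau}T^{\top}|^2$ and $|\tilde p^{\tau}|^2$ of the announced Hamiltonian.

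Next, I would verify that $V$ is $G$-invariant. By a short computation on the reproducing family $k_\sigma(\cdot,x)\alpha$, using the rigid-motion invariance of the scalar Gaussian $k_\sigma$ and the orthogonality $R^{\top}R=I_d$, one checks that $d_{\id}\rho_{(R,T)}:V\to V$ is a linear isometry. Hence $(d_{\id}\rho_{(R,T)})^{*}$ is an isometry of $V^{*}$ and
\[
|K_V(d_{\id}\rho_{(R,T)})^{*}\xi^{*}_{\tilde q}\tilde p|_V = |K_V\xi^{*}_{\tilde q}\tilde p|_V,
\]
so the $V$-contribution simplifies to the $g$-independent third term $|K_V\xi^{*}_{\tilde q}\tilde p|^2$.

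Assembling these three pieces in \eqref{eq:hamilt_tilde_red} yields the claimed formula for $H$, and the second assertion, that critical points of the energy \eqref{energy_isom} satisfy the associated geodesic equations, is then a direct application of Proposition \ref{critical_point_H} to the shape space $\operatorname{Isom}(\R^d)\times\operatorname{Imm}_{C^k}(D,\R^d)$, whose shape-space structure was established just above. The main technical care required is the bookkeeping of the dual pairings on $\mathfrak{isom}_d=\operatorname{Skew}_d\oplus\R^d$ (to justify writing $\tilde p^{A}R^{\top}+\tilde p^{\tau}T^{\top}$ as a representative of a functional on $\operatorname{Skew}_d$) and the explicit verification of the isometric action of $\operatorname{Isom}(\R^d)$ on the Gaussian RKHS; everything else is a direct substitution into the general results already proved.
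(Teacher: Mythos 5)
Your proposal is correct and follows essentially the same route as the paper: the paper writes the pre-Hamiltonian for \eqref{energy_isom} and eliminates the controls $(A,\tau,v)$ to obtain $\frac{1}{2}\vert \tilde{p}^A R^{\top}+\tilde{p}^{\tau}T^{\top}\vert^2+\frac{1}{2}\vert\tilde{p}^{\tau}\vert^2+\frac{1}{2}\vert K_V\xi_{R\tilde{q}+T}^{*}(R\tilde{p})\vert^2$, then invokes the rigid-motion invariance of the Gaussian kernel to reduce the last term to $\frac{1}{2}\vert K_V\xi_{\tilde{q}}^{*}\tilde{p}\vert^2$, which is exactly your specialization of \eqref{eq:hamilt_tilde_red} via $(d_{(I_d,0)}R_{(R,T)})^{*}$ together with the isometry property of $d_{\id}\rho_{(R,T)}$ on $V$. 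The concluding appeal to Proposition \ref{critical_point_H} for the geodesic characterization is also the argument used in the paper.
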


\begin{proof}
    
The Hamiltonian associated with this problem is 

\begin{equation*}
    H(R,T,\tilde{q},\tilde{p}^A,\tilde{p}^{\tau},\tilde{p},A,\tau,v) = (\tilde{p}^A \vert AR) + (\tilde{p}^{\tau} \vert AT +\tau) + (\tilde{p} \vert R^{-1}v(R\tilde{q}+T)) - \frac{1}{2}(\vert A \vert^2 + \vert \tau\vert^2 +\vert v \vert^2)
\end{equation*}
The geodesic equations lead to the following Hamiltonian 
\begin{equation*}
    H(R,T,\tilde{q},\tilde{p}^A,\tilde{p}^{\tau},\tilde{p}) = \frac{1}{2} \vert \tilde{p}^A R^{\top} + \tilde{p}^{\tau} T^{\top}\vert^2  + \frac{1}{2}\vert \tilde{p}^{\tau} \vert^2 + \vert  K_V \xi_{R\tilde{q}+T}^* (R\tilde{p}) \vert^2
\end{equation*}
The invariance by isometries of the kernel implies that $ \vert  K_V \xi_{R\tilde{q}+T}^* (R\tilde{p}) \vert = \vert K_V \xi_{\tilde{q}}^* \tilde{p} \vert$.
Therefore, the critical points of the energy satisfy the geodesic equations following proposition \ref{critical_point_H}.
\end{proof}
Thanks to this invariance, we can perform reduction as presented in the previous section. The action of isometries on immersions $(R,T) \cdot q = Rq +T$ is proper but is not necessarily free. Indeed, certain symmetric immersions, such as straight lines or circles, are stabilized by nontrivial subgroups of $\operatorname{SO}(d)$. To overcome this issue, we consider the space of immersions that have trivial stabilizer.
\begin{equation}
    \operatorname{Imm}_{C^k}(D,\R^d)_e = \{ q \in \operatorname{Imm}_{C^k}(D,\R^d) \mid \operatorname{Stab}(q)= \{e\}\}
\end{equation} This assumption is not too restrictive since this space is an open set of the space of immersions, so it is a manifold too, and is dense in $\operatorname{Imm}_{C^k}(D,\R^d)$, so it still represents almost all the curves we will be interested in.  Moreover, the invariance of the kernel implies that the space $V$ is $\operatorname{Isom}(\R^d)$-invariant under the action of $\operatorname{Isom}(\R^d)$, i.e  $d_{\id} \rho_{(R,T)} : v \mapsto (x \mapsto R^{-1}v(Rx +T))$ is an isometry and preserves $V$. Then, we define the momentum map $\mu : T^*\operatorname{Imm(D,\R^d)}_e \rightarrow \mathfrak{isom}_d^*$ by
\begin{eqnarray*}
    (\mu(q,p) \vert (A,\tau) ) &=& (\xi^{\mathfrak{g}*}_qp \vert (A,\tau)) \\
    &=& \frac{1}{2} \int_D  \operatorname{Tr}((q(s) p(s)^{\top} - p(s)q(s)^{\top})A) ds + \int_D p(s)^{\top} \tau \, ds \\
    &=& (\mu_A(q,p)\vert A) + (\mu_{\tau}(q,p) \vert \tau)
\end{eqnarray*}
The momentum map $\mu_A$ corresponds to the angular momenta in classical mechanic, denoted $p \wedge q$.
Moreover, Noether's theorem states that the momentum maps $\mu_A(q,p)$ and $\mu_{\tau}(q,p)$ are constants of the motion, which we can retrieve by direct computations.
\begin{eqnarray*}
    \frac{d}{dt} \mu_{\tau}(q_t,p_t) &=& \int_D \dot{p}_t(s) ds \\
    &=& - \int_D \int_D \nabla k_{\sigma}(q(s),q(s')) \langle p(s),p(s')\rangle  + \int_D Ap(s)^{\top}\\
    &=& 0
\end{eqnarray*}
and 
\begin{eqnarray*}
    \frac{d}{dt}\mu_{A}(q_t,p_t) &=& \int_D \dot{q}(s)p(s)^{\top} + q(s)\dot{p}(s)^{\top} - (\dot{p}(s)q(s)^{\top} + p(s)\dot{q}(s)^{\top})) ds \\
    &=& \int_D \int_D k(q(s),q(s'))p(s')p(s)^{\top} - q(s) \nabla k(q(s),q(s'))^{\top} \langle p(s),p(s')\rangle ds ds' \\
    &+& \int_D \int_D \nabla k(q(s),q(s'))q(s)^{\top} \langle p(s),p(s')\rangle- k(q(s),q(s'))p(s)p(s')^{\top} ds ds' \\
    &=& 0
\end{eqnarray*}
So, if we enforce that $(\mu_A,\mu_{\tau})=0$ at $t=0$, then $(\mu_A,\mu_{\tau})=0$ for all time t.
Thus, by restricting to the zero level set of the momentum map and considering the canonical projection $\pi : \mathfrak{isom}_d^* \times (\mu_A,\mu_{\tau})^{-1}(0) \rightarrow \mathfrak{isom}_d^* \times (\mu_A,\mu_{\tau})^{-1}(0)/\operatorname{Isom}(\R^d)$.  
, we can define a reduced Hamiltonian on 
\begin{equation}
    \mathfrak{isom}_d^* \times ( (\mu_A,\mu_{\tau})^{-1}(0)/\operatorname{Isom}(\R^d ) )\simeq \mathfrak{isom}_d^* \times (T^*\operatorname{Imm}_e(D,\R^d)/\operatorname{Isom}(\R^d))
\end{equation} 
Coming back to an inexact match problem, we want to minimize the functional $J$ defined in \ref{energy_isom} over $(\mu_A,\mu_{\tau})^{-1}(0)$ using a two-term data attachment:
$$
J(p^A_0,p^\tau_0,p_0) = H(p^A_0,p^\tau_0,p_0) + \mathcal{D}_{\mbox{rigid}}(R_1,T_1) + \mathcal{D}(R_1\tilde{q}_1+T_1)
$$
So, enforcing the constraints $(\mu_A,\mu_{\tau})=(0,0)$ leads to minimizing $J$ over $ \mathfrak{isom}_d^* \times (T^*\operatorname{Imm}_e(D,\R^d)/\operatorname{Isom}(\R^d))$ instead of $\mathfrak{isom}_d^* \oplus T^{*}\operatorname{Imm}_e(D,\R^d)$. A consequence is that the rigid and diffeomorphic deformations are decoupled and have clear separate contributions.
 Note that if we choose a coefficient in front of the rigid energy small enough, this problem becomes approximately equivalent to replacing $\mathcal{D}(R_1q_1+T_1)$ by $\Bar{\mathcal{D}}(\tilde{q}_1)=\inf_{R,T}\mathcal{D}(Rq +T)$.
\begin{rem}
    Since $(p^A_0,p^\tau_0,p_0) \in (\mu_A,\mu_{\tau})^{-1}(0)$, then we simply have
    \begin{equation*}    H(p_0^A,p_0^\tau,p_0)=\frac{1}{2}\left(\lvert p_0^A \rvert^2 + \lvert p_0^\tau\rvert^2 + \lvert K_V \xi_{q_0}^*p_0\rvert^2 \right)
    \end{equation*}
\end{rem}

\subsection{Numerical applications}

For numerical applications, curves in $\R^d$ are encoded by a set of landmarks $(q_i)_{i=1,...,n} \in (\R^d)^n$ linked by edges belonging to $F_q \subset \llbracket 1,n \rrbracket^2 $. Then, the action of vector fields on discrete curve is defined by the action on landmarks $v \cdot q_i=v(q_i)$. The covectors associated to the curve $q$ are represented by vectors $p_i \in \R^d$ with base point $q_i$. Then, the hamiltonian can be expressed as a discrete sum.
\begin{equation}
    H(R,T,\tilde{q},\tilde{p}^A,\tilde{p}^{\tau},\tilde{p}) = \frac{1}{2} \left(\vert \tilde{p}^A R^{\top} + \tilde{p}^{\tau} T^{\top} \vert^2 + \vert \tilde{p}^{\tau} \vert^2 + \sum_{i,j}\tilde{p}_i^{\top} K(\tilde{q}_i,\tilde{q}_j)\tilde{p}_j \right)
\end{equation}
In this framework, the constraints can be expressed as a sum over the landmarks 
\begin{equation} \label{constraints_discrete_curve}
\left\{
\begin{aligned}
    \mu_A = 0 &\;\Longleftrightarrow\; \sum_i \left( \tilde{p}_i \tilde{q}_i^\top - \tilde{q}_i \tilde{p}_i^\top \right) = 0 \\
    \mu_\tau = 0 &\iff \sum_i \tilde{p}_i = 0
\end{aligned}
\right.
\end{equation}
The covector $p$ have to be orthogonal to the curve to preserve the invariance by reparameterization, which can be expressed by projecting the covectors on the orthogonal of the curve $\tilde{p} \mapsto \tilde{p} - \langle \tilde{p},\tilde{q}\rangle \tilde{q}$. Therefore, the conditions on covectors can be enforced by projecting them onto the null space 
\begin{equation}
    \Big\{(\tilde{p}_i)_{i=1}^n\in (\mathbb R^d)^n\ \Big\vert\ \sum_{i=1}^n(\tilde{p}_i\tilde{q}_i^\top-\tilde{q}_i\tilde{p}_i^\top)=0,\ \sum_{i=1}^n \tilde{p}_i=0,\ \text{and }\langle \tilde{q}_i,\tilde{p}_i\rangle=0 \ \text{for all }i\in [\![1,n]\!]\Big\}
\end{equation}
However, we will often not impose the conditions $\langle \tilde{q}_i, \tilde{p}_i\rangle=0$ required for invariance by reparameterization, but will instead use a varifold data attachment that implicitly incorporates them \cite{Charon_2013}.

\begin{rem}
    If we want to enforce the condition on rotations only, $\mu_A= 0$ can be satisfied by projecting the covectors $\tilde{p}_i$ onto the null space of the functional $F : \tilde{p} \mapsto \sum_i \tilde{p}_i\tilde{q}_i^{\top} - \tilde{q}_i \tilde{p}_i^{\top}$ using a Singular Value Decomposition (SVD). On the other hand, to enforce the condition on translation only $\mu_{\tau}=0$, we can center the covectors $\tilde{p}- \sum_i \tilde{p}_i$. Due to Noether's theorem, these conditions have to be satisfied at time t=0 only and then will remain equal to zero during the motion. 
\end{rem}

\paragraph{Results}  
In this section, we present numerical experiments illustrating the matching problem and the influence of constraints $ \mu_A = 0$. We consider a registration through a deformation generated by a diffeomorphic deformation and a rotation. The optimization problem is thus
\begin{eqnarray}
\label{var_pb_numexp}
    \inf_{(A,v) \in L^2([0,1],\mathfrak{so}_d \times V)} J(A,v) &=& \int_0^1 \frac{1}{2} \vert A_t \vert^2 + \frac{1}{2} \vert v_t \vert^2 \, dt + \mathcal{D}(R_1 \tilde{q}_1) \\
     \text{ s.t }& &     \left\{
        \begin{array}{l}
        \dot{R}_t = A_t R_t \\
        \dot{\tilde{q}}_{i,t} = R_t^{-1} v_t(R_t \tilde{q}_{i,t}) \quad \text{ for } i \in \{ 1,\ldots,n \} \\
        (R_0,\tilde{q}_0) = (I_d,q_S)        
        \end{array} 
        \right.\notag
\end{eqnarray}
where  $\tilde{q} = R^{-1}q $.  We consider a varifold data attachment term $\mathcal{D}$ which is particularly well-adapted to the matching of curves due to the invariance by reparameterization \cite{Charon_2013}. Ideally, the rotation component of the model would encode the full rotation between the source and the target, while the diffeomorphic deformation would capture remaining deformations. We represent $\tilde{q}=R^{-1}q$, which isolates the action of the diffeomorphic deformation by removing the rotational component. Consequently, we expect that the geodesic $\tilde{q}_t$ does not rotate.

\begin{figure}[!ht]
\centering
\begin{subfigure}{0.485\textwidth}
        \centering
        \includegraphics[width=0.4\linewidth]{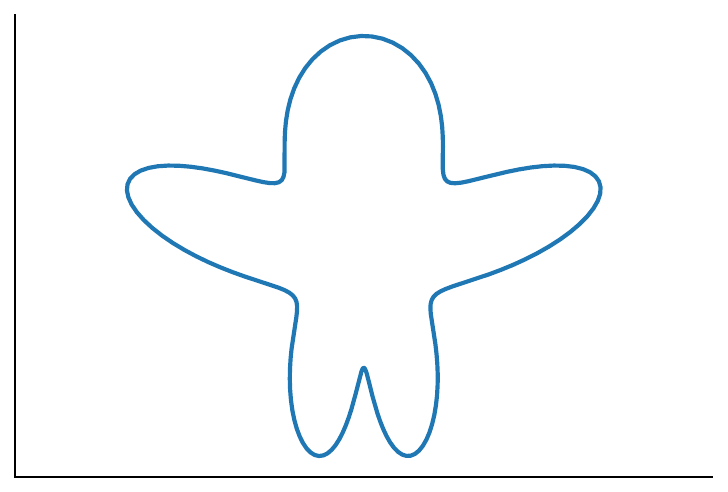}
        \caption{Source}
        \label{source1}
\end{subfigure}
\hspace{-3cm}
\begin{subfigure}{0.485\textwidth}
        \centering
        \includegraphics[width=0.4\linewidth]{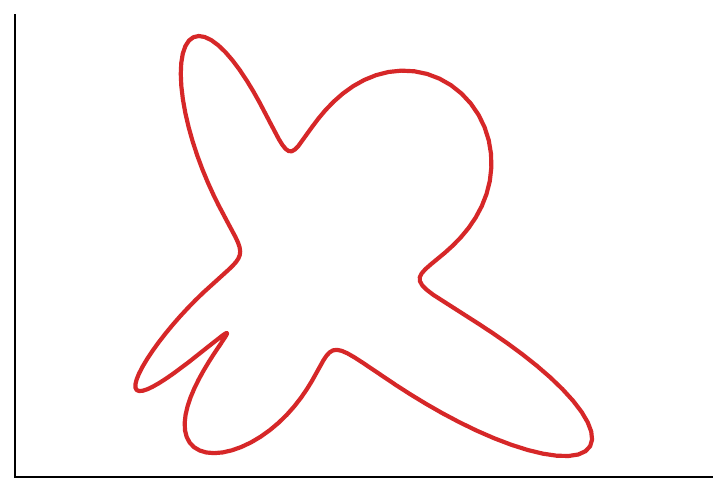}
        \caption{Target}
        \label{target1}
\end{subfigure}
%--- Bottom row ---
\vspace{1em}

\begin{subfigure}{0.9\textwidth}
    \centering  
\begin{minipage}[c]{0.8\textwidth}
\centering
\includegraphics[scale=0.7]{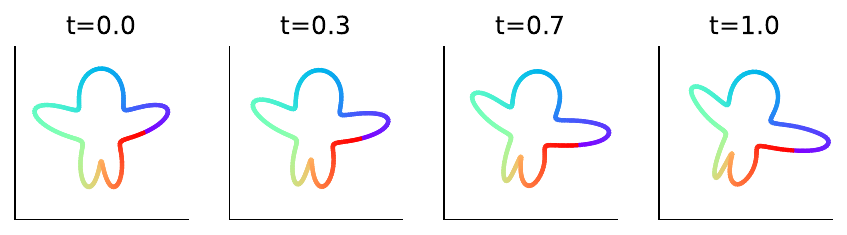}
\end{minipage}%
\begin{minipage}[c]{0.2\textwidth}
    \centering
    \includegraphics[scale=0.3]{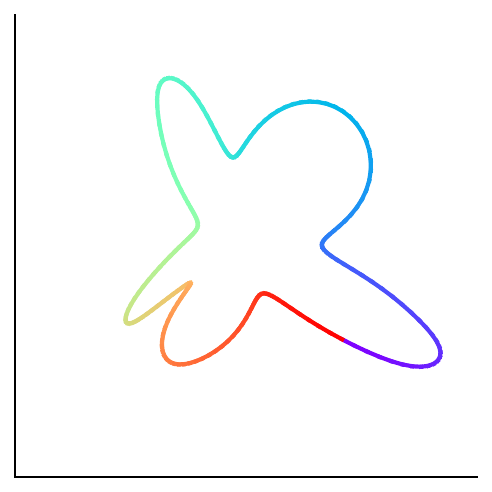}
\end{minipage} 
\caption{Without constraints $\mu_A=0$}
\label{fig:no_constraints1}
\end{subfigure}

 \vspace{0.5em}
\begin{subfigure}{0.9\textwidth}
\centering
\begin{minipage}[c]{0.8\textwidth}
\centering
\includegraphics[scale=0.7]{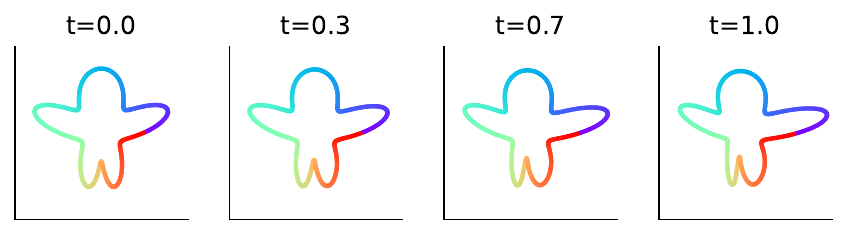}
\end{minipage}%
\begin{minipage}[c]{0.2\textwidth}
    \centering
    \includegraphics[scale=0.3]{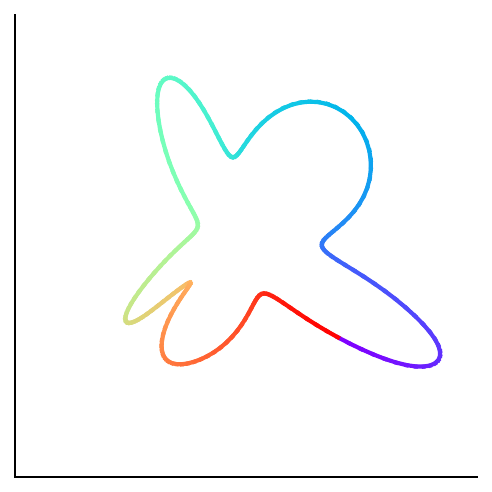}
\end{minipage}
\caption{With constraints $\mu_A=0$}
\label{fig:constraints}
\end{subfigure}

\caption{\textbf{Influence of the constraints on geodesics}. This experiment illustrates a matching task between the source \ref{source1} and the target \ref{target1}. The matching is performed through deformations generated by rotations and diffeomorphic deformation induced by a Gaussian RKHS, solving problem \eqref{var_pb_numexp}. For both experiments, we show the evolution of $\tilde{q}_t = R_t^{-1} q_t$, in order to represent only deformations induced by diffeomorphisms. Figures \ref{fig:no_constraints1} and \ref{fig:constraints} illustrate the registration performed without and with the constraint $\mu_A = 0$ enforced at time $t=0$, respectively.  The right column corresponds to the final matching where both the rotation and the  diffeomorphism are applied. }
\label{Mult:fig:toy_example_curve1}
\end{figure}

Figure \ref{Mult:fig:toy_example_curve1} compares the effect of enforcing the constraint $\mu_A = 0$ \eqref{constraints_discrete_curve} at time $t=0$ when matching the source \ref{source1} onto the target \ref{target1}.
Figure \ref{fig:no_constraints1} shows that without constraints, the diffeomorphic deformation can capture rotational motion while this issue does not arise when the constraints are enforced. We highlight that the constraints reduce the space of possible non-rigid deformations, and thereby facilitate the convergence of the LDDMM component.

\begin{figure}[h!]
    \centering

    %--- Top row: two images side by side ---
    \begin{subfigure}{0.485\textwidth}
        \centering
        \includegraphics[width=0.4\linewidth]{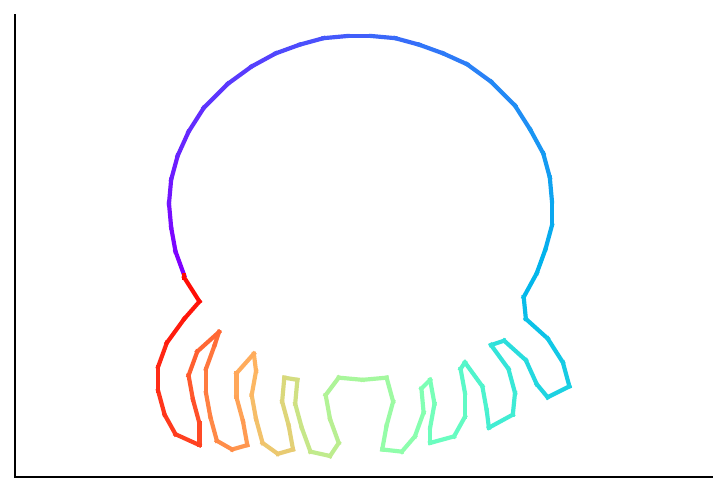}
        \caption{Source}
        \label{source2}
    \end{subfigure}
    \hspace{-3cm}
    \begin{subfigure}{0.485\textwidth}
        \centering
        \includegraphics[width=0.4\linewidth]{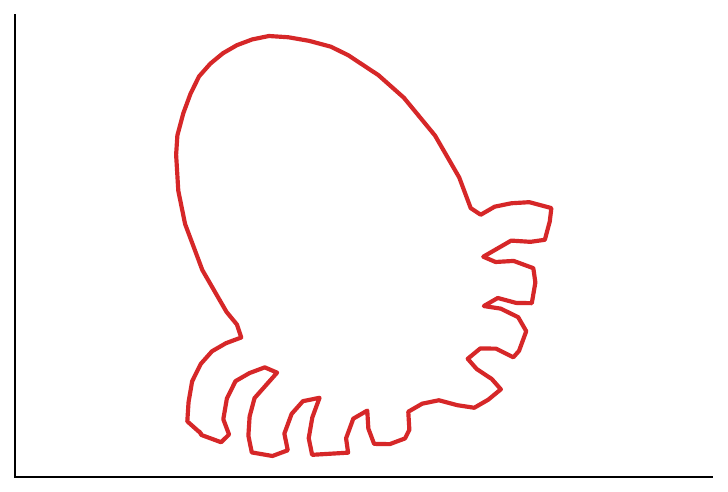}
        \caption{Target}
        \label{target2}
    \end{subfigure}

    %--- Bottom row ---
    \vspace{0.5em}
    \begin{subfigure}{0.8\textwidth}
        \centering
        \includegraphics[width=\linewidth]{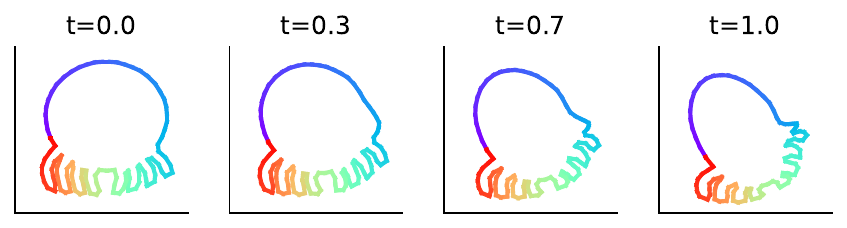}\\ 
           \caption{Rigid alignment followed by LDDMM}
           \label{rigid_lddmm_1}
    \end{subfigure}

    \vspace{0.5em}
    \begin{subfigure}{0.8\textwidth}
        \centering
     \includegraphics[width=\linewidth]{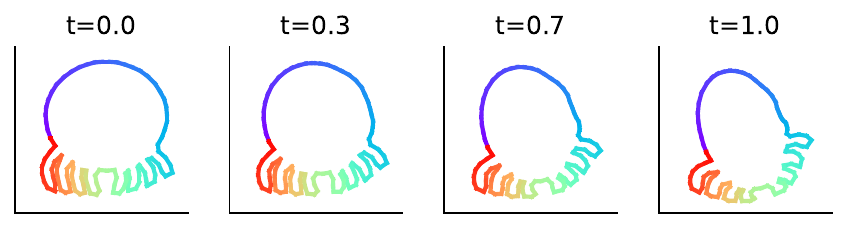}\\
     \caption{Joint optimization without constraints $\mu_A=0$}
     \label{without_constraints_1}
    \end{subfigure}    \vspace{0.5em}
    \begin{subfigure}{0.8\textwidth}
        \centering
        \includegraphics[width=\linewidth]{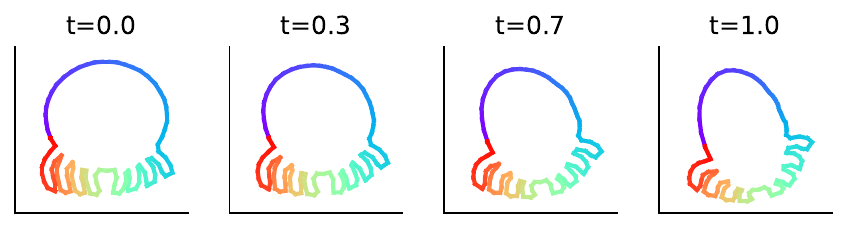}
           \caption{Joint optimization with constraint $\mu_A = 0$}
        \label{with_constraints_1}
    \end{subfigure}
    \caption{\textbf{Comparison of registration methods.} This experiment illustrates a matching task between the source \ref{source2} and the target \ref{target2}, minimizing three different variational problems using deformations generated by rotations and diffeomorphic deformations induced by a Gaussian RKHS. Second row \ref{rigid_lddmm_1} corresponds to a diffeomorphic registration after a first rigid pre-alignment. The two bottom rows \ref{without_constraints_1} and \ref{with_constraints_1} correspond to minimization of problem \eqref{var_pb_numexp} where we respectively not enforced and then enforced the constraint $\mu_A=0$. For all the experiments, we show the evolution of $\tilde{q}_t=R_t^{-1}q_t$ in order to represent only deformations induced by diffeomorphisms.}
    \label{Mult:fig:toy_example_curve3}
\end{figure}

\begin{figure}[h!]
    \centering

    %--- Top row: two images side by side ---
    \begin{subfigure}{0.485\textwidth}
        \centering
        \includegraphics[width=0.4\linewidth]{figures/octopus_goat2/source1.pdf}
        \caption{Source}
    \end{subfigure}
    \hspace{-3cm}
    \begin{subfigure}{0.485\textwidth}
        \centering
        \includegraphics[width=0.4\linewidth]{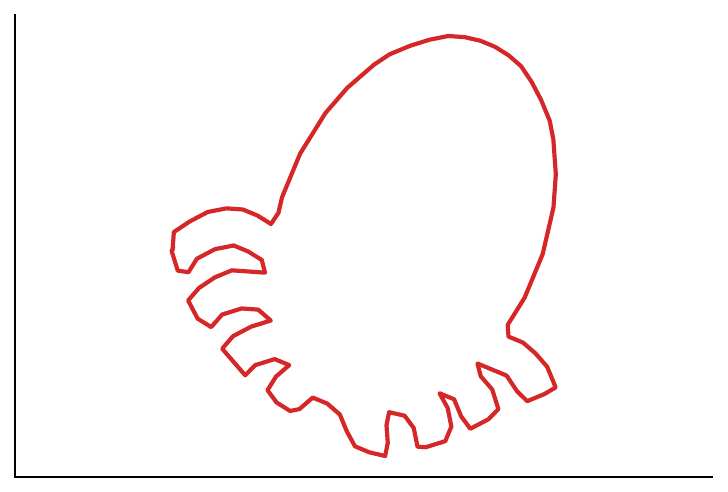}
        \caption{Target}
    \end{subfigure}

    %--- Bottom row ---
    \vspace{0.5em}
    \begin{subfigure}{0.8\textwidth}
        \centering
        \includegraphics[width=\linewidth]{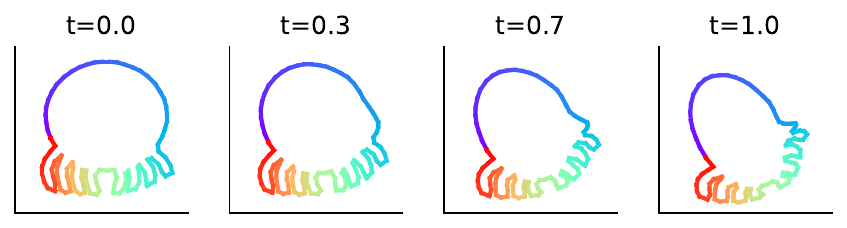}\\ 
           \caption{Rigid alignment followed by LDDMM}
        \label{rigid_lddmm_2}
    \end{subfigure}

    \vspace{0.5em}
    \begin{subfigure}{0.8\textwidth}
        \centering
     \includegraphics[width=\linewidth]{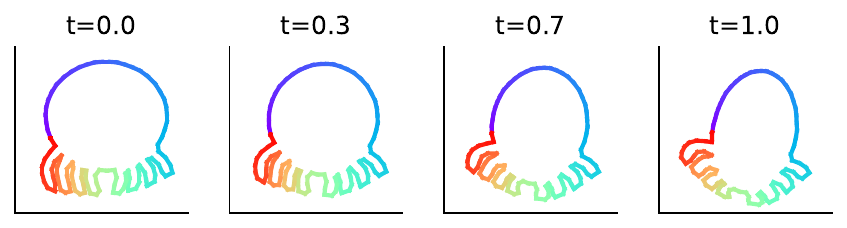}\\
     \caption{Joint optimization without constraint $\mu_A=0$}
    \end{subfigure}    \vspace{0.5em}
    \begin{subfigure}{0.8\textwidth}
        \centering
        \includegraphics[width=\linewidth]{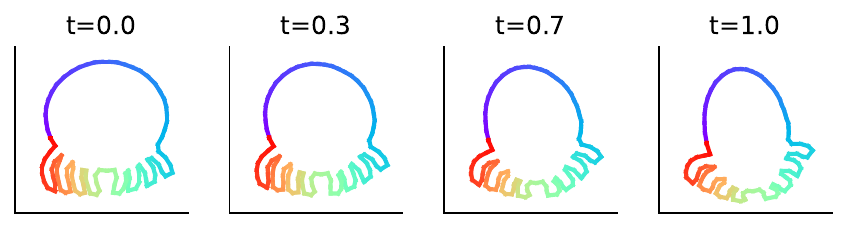}
           \caption{Joint optimization with constraint $\mu_A=0$}
    \end{subfigure}
    \caption{\textbf{Comparison of registration methods.} Similarly to Figure \ref{Mult:fig:toy_example_curve3}, this experiment illustrates the same matching tasks, with the same source, while the target has been slightly rotated.}
    \label{Mult:fig:toy_example_curve4}
\end{figure}

We further experimented our method on the dataset \cite{Enseiht}, shown in Figures \ref{Mult:fig:toy_example_curve3} and in a similar experiment with the target rotated \ref{Mult:fig:toy_example_curve4}. For these experiments, we also compare the joint optimization (problem \eqref{var_pb_numexp}) with the standard two-stage procedure, where rigid pre-alignment precedes diffeomorphic registration, and analyze the resulting differences. Figures illustrate that, when solving problem \eqref{var_pb_numexp} with the constraints enforced, the evolution of $\tilde{q}_t$ does not depend on the initial orientation of either the target or the source. Moreover, in all these experiments, our method consistently yields the most accurate matching, while the two-step approach and the joint optimization without the constraint enforced produce diffeomorphic deformations that perform a rotation too.  In particular, the two-step optimization presented in Figure \ref{rigid_lddmm_2} demonstrates that the diffeomorphic deformation rotates the shape in the opposite direction of the target. This results from the pre-alignement step over-rotating the shape, which the diffeormorphism subsequently attempts to compensate. Furthermore, as illustrated in Figures \ref{Mult:fig:toy_example_curve3} and \ref{Mult:fig:toy_example_curve4}, the orientation of the target causes an emergence of an additional limb in the two-step approach. This phenomenon arises because the rotation becomes fixed during the second optimization phase (purely diffeomorphic registration), leading to an overly constrained deformation process.

\newpage \newpage \newpage
\section{Transport of landmarks using anisotropic Gaussian kernels}
\label{Ani:Sec:sp_const_kernels}
In this section, we assume that the space $V$ generating vector fields for the diffeomorphism part is a RKHS induced by an anisotropic Gaussian kernel that allows to favor motions along certain privileged axes. Such a kernel is built by replacing the classic euclidian norm in the gaussian kernel by an anisotropic metric encoded by a symmetric definite positive matrix. While deforming shapes, we would like to keep track of the favorized axis encoded by this metric by transporting it along with the shape during the motion (cf. Figures \ref{Ani:fig:toy_example_scaling_fixed} and \ref{Ani:fig:toy_example_scaling_non_fixed}). To do so, we enrich the shape space by considering the metric as a part of it.  Following \ref{Sec:diff_structure} we define the group of deformations as a semidirect product of scalings, isometries and diffeomorphisms, acting on both the metric and the shape encoded as landmarks.

In this setting, we will not obtain a right-invariant sub-Riemannian metric on the group of deformations, since the Gaussian kernel depends now on the metric which is part of the shape. 

\subsection{Anisotropic Gaussian kernel}
Let $S_d^{++}$ be the space of symmetric definite positive matrices on $\R^d$. Any symmetric definite positive matrix $\Sigma\in S_d^{++}$ defines an anistropic scalar product $\langle x,y\rangle_\Sigma = \langle \Sigma x, y\rangle$ and its associated anisotropic norm $\lVert q\rVert_\Sigma^2=\langle q,q\rangle_\Sigma$. We define the (normalized) anisotropic Gaussian kernel associated with this metric :
\[
k_\Sigma(x,y) = \exp\left(-\frac{1}{2}\lVert x-y\rVert_{\Sigma^{-1}}^2\right)\Sigma.
\]
and we denote $V_\Sigma$ the reproducing kernel Hilbert space associated to $k_\Sigma$ and $K_\Sigma$ : $V_\Sigma^* \to V_\Sigma$ the Riesz isomorphism.

Note that in particular, if $\Sigma$ is not a multiple of the identity, i.e. if its eigenvalues are distinct, then the kernel $k_\Sigma$ is not invariant by rotation anymore, compared to classic isotropic Gaussian kernels. This highlights the fact that the metric $\Sigma$ privileges some axis.

\subsubsection*{A first (naive) model}

A first naive idea would be to use this Gaussian kernel to perform a coupled deformation of isometries and diffeomorphisms as in section \ref{Sec:diff_structure}. However, the introduction of anisotropy in the space $V_\Sigma$ through the metric $\Sigma$ will make the deformation induced by the diffeomorphism easier along certain directions, leading to a lower energy cost and tends to align the shape along those favorized axis. We illustrate this phenomenon with an example on the space of $n$-landmarks $\operatorname{Lmk}_n(\R^d)$.

We define, following section \ref{Sec:Rigid+diffeo}, the group $SO_d\ltimes \Diff_{C_0^k}(\R^d)$ of deformations, allowing to couple rotations and diffeomorphisms. This group acts on the shape space $\operatorname{Lmk}_n(\R^d)$ by 
\[
(R,\varphi)\cdot (q_i)_{i\leq n} = (R\varphi(q_i))_{i\leq n}
\]
and the infinitesimal action becomes $(A,u)\cdot (q_i)_{i\leq n} = (Aq_i +u(q_i))_{i\leq n}$ where $(A,u)\in \mathfrak{so}_d\oplus C_0^k(\R^d,\R^d)$. This allows, given a source shape $\bq_S = (q_{S,i})$ and a target $\bq_T =(q_{T,i})$, to introduce the matching problem (cf. general problem \ref{var_pb})
\begin{eqnarray}
    \label{Ani:var_pb_an_naive}
    \inf_{(A,u) \in L^2([0,1],\mathfrak{so}_d \times V_\Sigma)} J(A,u) &=& \frac{1}{2}\int_0^1 \vert A \vert_{\mathfrak{so}_d}^2 +  \vert v_t \vert_{V_\Sigma}^2 \, dt + \mathcal{D}(q_1) \\
     \text{ s.t }& &     \left\{
        \begin{array}{l}
            \dot{q}_{i,t} = A_t q_{i,t} + u_t(q_{i,t}) \\
            q_{i,0} = q_{S,i}        
        \end{array} 
        \right.\notag
\end{eqnarray}
where $\mathcal{D}:\operatorname{Lmk}_n\to\R$ is a varifold data attachment term \cite{Charon_2013,BME} defined by

\[
\mathcal{D}(\bq) = \sum_{i,j} k_W(q_i,q_j) + k_W(q_{T,i},q_{T,j}) - 2k_W(q_i,q_{T,j}) 
\]
and $k_W$ is a Gaussian kernel (or a sum of Gaussian kernels).
\paragraph{Results} We test this model by minimizing variational problem \ref{Ani:var_pb_an_naive} with a given source and target represented in Figure \ref{Ani:fig:source_target}. Here the source and target shape are encoded by landmarks with $200$ points, and represents elongated shapes along the $y$-axis with the target being larger than the source. We use for the diffeomorphisms an anisotropic gaussian kernel with scales$\Sigma_S=\begin{bmatrix}
1 & 0  \\
0        & 0.1
\end{bmatrix}$, meaning the deformation is easier along the $x$-axis and harder along the $y$-axis. We show in \ref{Ani:fig:toy_example_scaling_fixed} the geodesic obtained after the minimization of \eqref{Ani:var_pb_an_naive}. We clearly observe that the shape tends to align along the $x$-axis using the rotation part of the deformation, before performing the stretching of the shape necessary to obtain the target with a lower cost. Indeed, the minimization algorithm manages to use the rotation part in order to align the shape with the $x$-axis (favored by the anisotropic kernel) instead of performing a vertical stretching which is the expected deformation.  This problem is due to the non-transport of the anisotropy by the rotation. Indeed, the axis of the anisotropy are fixed so the algorithm will tend to first align with the favored axis with the rotation, deforms with the diffeomorphism and then rotating back to the unfavored axis. If the axis of the anisotropy are transported, this trick would not work anymore since they will rotate along with the shape. Note that the alignment with the $x$-axis is not complete and only partial here, this is because the cost for the rotation is not free.

\begin{figure}[ht]
\centering
\includegraphics[scale=0.5]{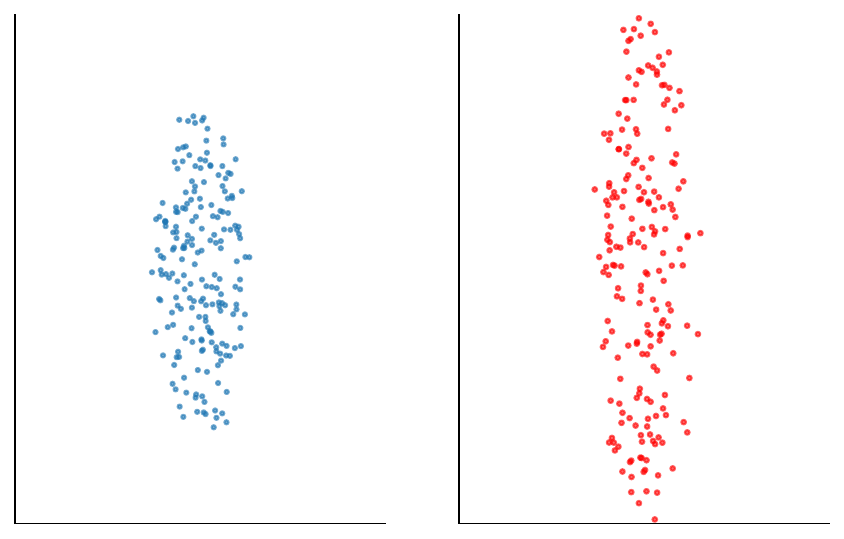}
\caption{\emph{Performing jointly rigid and non rigid registration, with an anisotropic kernel.} Source shape (in blue), $200$ points and target shape (in red), $200$ points.}
\label{Ani:fig:source_target}
\end{figure}

\begin{figure}[ht]
\centering
\includegraphics{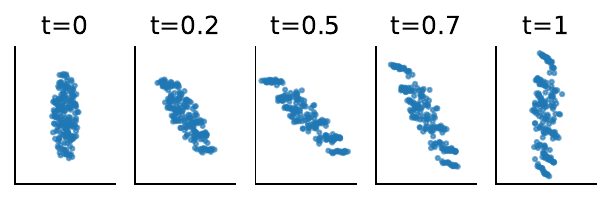}
\caption{\emph{Performing jointly rigid and non rigid registration, minimizing problem \ref{Ani:var_pb_an_naive} with an anisotropic kernel.} This experiment illustrates a matching task between the source (blue) and the target (red). The matching is performed through deformations generated by rotations and diffeomorphic deformation induced by a anisotropic gaussian kernel with scale $(\sigma_x,\sigma_y)=(1.,0.1)$.}
\label{Ani:fig:toy_example_scaling_fixed}
\end{figure}

\subsection{Group of deformations}
In this section, we define the group of deformations as a semidirect product between the finite dimensional group composed of scalings and isometries, and the group of diffeomorphisms, following section \ref{Sec:diff_structure}. As a first step to tackle the issue illustrated in Figure \ref{Ani:fig:source_target}, we will define an action of this semidirect product on the space of metrics $S_d^{++}$ such that the anisotropic metric is also transported by the group of deformations, in particular through the action of isometries and scalings.

\subsubsection{The semidirect product of isometries and scalings}
We define the (finite dimensional) Lie group $\alpha\operatorname{-Isom}(\R^d)=(\R_{>0} \times \operatorname{SO}_d)\ltimes\R^d$ consisting of scalings, rotations and translations, with the composition law
\[
(\alpha, R, T)\cdot(\alpha',R',T') = (\alpha \alpha', RR', \alpha RT' + T)
\]
The group $\alpha\operatorname{-Isom}(\R^d)$ acts on $\R^d$ via the transformation $\varphi_{(\alpha,R,T)}$ defined by
\[
\varphi_{(\alpha,R,T)} : \app{\R^d}{\R^d}{q}{\alpha Rq + T}
\]
so that \[\varphi_{(\alpha,R,T)}\circ\varphi_{(\alpha',R',T')}=\varphi_{(\alpha,R,T)\cdot(\alpha',R',T')}\]
 The group of rigid motions $\alpha\operatorname{-Isom}(\R^d)$ also acts on the left on the space $S_d^{++}$ in the following way 
\[
(\alpha,R,T)\cdot \Sigma= \alpha^2 R\Sigma R^\top
\]
This action is compatible with the definiton of the anisotropic Gaussian kernel we defined before in the sense that we get isometries between RKHS.

\begin{prop}[Isometries between Gaussian RKHS]
\label{Ani:isom_RKHS}
Let $\Sigma\in S_d^{++}$ and $u\in V_\Sigma$, where $V_\Sigma$ is the Gaussian RKHS induced by the metric $\Sigma$. Let $(\alpha,R,T)\in \alpha\operatorname{-Isom}(\R^d)$, and define the vector field $u'=d\varphi_{(\alpha,R,T)^{-1}}u\circ\varphi_{(\alpha,R,T)}$ by
\[
\forall q \in\R^d, \quad u'(q) = \alpha^{-1}R^\top u(\alpha Rq+T).
\]
Then $u'\in V_{\Sigma'}$ with $\Sigma'=(\alpha,R,T)^{-1}\cdot \Sigma=\alpha^{-2}R^\top \Sigma R$ and we get 
\[
\lvert u'\rvert_{V_{\Sigma'}} = \lvert u \rvert_{V_\Sigma}
\]
\end{prop}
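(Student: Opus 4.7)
I would prove the statement by verifying it first on the dense subspace generated by finite kernel combinations, where the reproducing kernel Hilbert space structure of $V_\Sigma$ allows a direct computation, and then extend by density and linearity. Concretely, $V_\Sigma$ is the completion of $\Vect\{k_\Sigma(x,\cdot)\xi : x,\xi\in\R^d\}$ with inner product fixed by the reproducing property $\langle k_\Sigma(x,\cdot)\xi,\, k_\Sigma(y,\cdot)\eta\rangle_{V_\Sigma}=\xi^\top k_\Sigma(x,y)\eta$, so it is enough to check that the pullback map $u\mapsto u'$ sends such finite sums into the analogous dense subset of $V_{\Sigma'}$ and is length-preserving on them.

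\textbf{Key identities.} The main algebraic lemma is that, writing $\Sigma'=\alpha^{-2}R^\top\Sigma R$, one has $(\Sigma')^{-1}=\alpha^2 R^\top\Sigma^{-1}R$ since $R\in \operatorname{SO}_d$. Under the substitution $x=\alpha R y+T$ (i.e.\ $y=\varphi_{(\alpha,R,T)}^{-1}(x)$) this gives the crucial identity
\[
\|x-(\alpha R q+T)\|^2_{\Sigma^{-1}}=\alpha^2(y-q)^\top R^\top \Sigma^{-1} R (y-q)=\|y-q\|^2_{(\Sigma')^{-1}},
\]
so the exponential factor of $k_\Sigma$ transports unchanged. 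For the matrix factor of the kernel, the same algebra yields $\alpha^{-1}R^\top\Sigma=\alpha\,\Sigma' R^\top$, which is exactly the identity needed to absorb the outer prefactor $\alpha^{-1}R^\top$ in the definition of $u'$.

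\textbf{Assembling on generators and density.} Applying these two identities to $u=\sum_i k_\Sigma(x_i,\cdot)\xi_i$ and setting $y_i=\alpha^{-1}R^\top(x_i-T)$, $\eta_i=\alpha R^\top\xi_i$, a direct computation should give $u'=\sum_i k_{\Sigma'}(y_i,\cdot)\eta_i$, which already shows $u'\in V_{\Sigma'}$. Norm preservation then reduces to a bilinear expansion: $|u|_{V_\Sigma}^2=\sum_{i,j}\xi_i^\top k_\Sigma(x_i,x_j)\xi_j$ versus $|u'|_{V_{\Sigma'}}^2=\sum_{i,j}\eta_i^\top k_{\Sigma'}(y_i,y_j)\eta_j$; the exponentials match by the first identity, while the matrix contribution yields $\eta_i^\top\Sigma'\eta_j=\alpha^2\xi_i^\top R\Sigma' R^\top\xi_j=\xi_i^\top\Sigma\xi_j$ since $\alpha^2 R\Sigma' R^\top=\Sigma$. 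Hence the two quadratic forms agree on generators. The linear map $u\mapsto u'$ is continuous for the kernel topology, so it extends uniquely as an isometry from $V_\Sigma$ to $V_{\Sigma'}$, and applying the same construction to $(\alpha,R,T)^{-1}$ furnishes an inverse, giving surjectivity.

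\textbf{Main obstacle.} The only non-mechanical point is the bookkeeping of the matrix factors: both the intrinsic $\Sigma$ multiplier inside the matrix-valued kernel $k_\Sigma$ and the external $\alpha^{-1}R^\top$ in the definition of $u'$ must recombine to reproduce exactly the normalized kernel $k_{\Sigma'}$ with the correctly transformed weights $\eta_i=\alpha R^\top \xi_i$. Once the two scalar and matrix identities above are isolated, the rest of the argument is a standard bilinear expansion followed by an RKHS density and continuity argument.
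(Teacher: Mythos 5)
Your proposal is correct and follows essentially the same route as the paper: both reduce to the total subset of finite kernel combinations $u=\sum_i k_\Sigma(x_i,\cdot)\xi_i$, show that the pullback maps these to $\sum_i k_{\Sigma'}(y_i,\cdot)\eta_i$ with $y_i=\alpha^{-1}R^\top(x_i-T)$ and $\eta_i=\alpha R^\top\xi_i$, and verify norm preservation by the bilinear expansion $\eta_i^\top\Sigma'\eta_j=\xi_i^\top\Sigma\xi_j$ together with the invariance of the exponential factor. Your explicit isolation of the two matrix identities and the final density-plus-continuity extension only makes explicit what the paper leaves implicit.
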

\begin{rem}
    In particular, the map
    \[
    \app{V_\Sigma}{V_{\alpha^{-2}R^\top \Sigma R}}{u}{d\varphi_{(\alpha,R,T)^{-1}}u\circ\varphi_{(\alpha,R,T)}}
    \] is an isometry of Hilbert spaces.
\end{rem}
\begin{proof}
Since the family $\{k_\Sigma(q,\cdot)b\mid q,b\in\R^d\}$ forms a total subset of the RKHS $V_\Sigma$ \cite{Aro50}, we restrict to the case  $u = \sum_i k_\Sigma(q_i,\cdot)b_i$ for some $q_i,b_i\in\R^d$. Let $(\alpha,R,T)\in\alpha\operatorname{-Isom}(\R^d)$, we get for any $q\in\R^d$, 
\begin{align*}
    d&\varphi_{(\alpha,R,T)^{-1}}u\circ\varphi_{(\alpha,R,T)}(q)=  \sum_i\alpha^{-1}R^\top k_\Sigma(q_i,\alpha Rq + T) b_i\\ 
    &= \sum_i \exp\left(-\frac{1}{2}\langle \Sigma^{-1}(q_i - \alpha Rq - T),q_i -  \alpha Rq - T\rangle\right) \alpha^{-1}R^\top \Sigma b_i\\
    &= \sum_i\exp\left(-\frac{1}{2}\langle \alpha^2R^\top \Sigma^{-1}R(\alpha^{-1}R^\top( q_i - T) - q),\alpha^{-1}R^\top( q_i - T) - q \rangle\right) \alpha^{-2}R^\top \Sigma R (\alpha R^\top b_i)\\
    &= \sum_i k_{\alpha^{-2}R^\top \Sigma R}(\alpha^{-1}R^\top( q_i - T), q) \alpha Rb_i 
\end{align*}
and thus $d\varphi_{(\alpha,R,T)^{-1}}u\circ\varphi_{(\alpha,R,T)} = K_{\alpha^{-2}R^\top \Sigma R}\sum_i\delta_{\alpha^{-1}R^\top( q_i - T)}^{\alpha R^\top b_i} \in V_{(\alpha,R,T)^{-1}\cdot \Sigma}$. Moreover, by definition we see that 
\begin{align*}
\lvert&\varphi_{(\alpha,R,T)^{-1}}u\circ\varphi_{(\alpha,R,T)}\rvert_{V_{(\alpha,R,T)^{-1}\cdot \Sigma}}^2 =\sum_{i,j} (\alpha R^\top b_i)^\top k_{\alpha^{-2}R^\top \Sigma R}(\alpha R^\top( q_i - T), \alpha R^\top( q_j - T)) \alpha R^\top b_j \\
   &= \sum_{i,j}\exp\left(-\frac{1}{2}\langle \alpha^2R^\top \Sigma^{-1}R(\alpha^{-1}R^\top( q_i - q_j)),\alpha^{-1}R^\top( q_i - q_j) \rangle\right)\langle \alpha^{-2}R^\top \Sigma R (\alpha R^\top b_i),\alpha R^\top b_j\rangle \\
   &= \sum_{i,j}\exp\left(-\frac{1}{2}\langle  \Sigma^{-1}( q_i - q_j)),( q_i - q_j) \rangle\right)\langle  \Sigma b_i, b_j\rangle \\
   &= \lvert u\rvert_{V_\Sigma}^2
   \end{align*}
   which concludes the proof.
\end{proof}

\subsubsection{Adding the group of diffeomorphisms}
Following section \ref{Sec:diff_structure}, we can now define the half-Lie group of deformations as a semidirect product of the finite dimensional Lie group $\alpha\operatorname{-Isom}(\R^d)$ and of the group of $C^k_0$ diffeomorphisms.
\[
\alpha\operatorname{-Isom}(\R^d)\ltimes\operatorname{Diff}_{C_0^k}(\R^d)
\]
We recall that the group operations are defined by
\[
\left\{\begin{array}{l}
     (\alpha,R,T,\varphi)\cdot(\alpha',R',T',\varphi')= \left(\alpha\alpha',RR', \alpha R T'+T, \varphi_{(\alpha',R',T')}^{-1}\circ\varphi\circ\varphi_{(\alpha',R',T')}\circ\varphi'\right)\\
     (\alpha,R,T,\varphi)^{-1}=\left(\alpha^{-1},R^\top,-\alpha^{-1} R^\top T,\varphi_{(\alpha,R,T)}\circ\varphi^{-1}\circ\varphi_{(\alpha,R,T)}^{-1}\right) 
\end{array}
\right.
\]
and by proposition \ref{gga_struc_G} the family of half-Lie groups $\{\alpha\operatorname{-Isom}(\R^d)\ltimes\operatorname{Diff}_{C_0^k}(\R^d),\, k\geq1\}$ satisfies conditions (G.1-5).
\subsection{The coupled dynamic on the space of landmarks}
We can now define a new model using the group of deformations $\alpha\operatorname{-Isom}(\R^d)\ltimes\operatorname{Diff}_{C_0^k}(\R^d)$ in order to transport the shape and the anisotropy, avoiding the behaviour of Figure \ref{Ani:fig:toy_example_scaling_fixed}. Note that this model, and the idea of creating interactions between the shapes and the kernel that generates vector fields, is close to the general modular approach for diffeomorphic deformations introduced in \cite{gris2015sub}. In that work, the authors define a set of geometric descriptor to construct vector fields that transport the shape along with these geometric descriptors via its infinitesimal action. In this section, the global metric inducing the anisotropic Gaussian kernel can be interpreted as one of these geometric descriptors. In this part, for sake of readiness, we only deal with shapes encoded by landmarks.

\subsubsection{The shape space}
We consider a source shape $q_S\in \operatorname{Lmk}_n(\R^d)$ that we want to match to a given target $q_T \in \operatorname{Lmk}_n(\R^d)$. Moreover, we assume that we have a prior on the distribution of $q_S$ given by a symmetric definite positive matrix $\Sigma_S\in S_d^{++}$ representing the anisotropy of the shape. This matrix defines a metric, as stated previously, that we want to transport together with the shape.
Consequently, we define in the same fashion as section \ref{Sec:shape_space} the augmented shape space 
\[\tilde{\mathcal{Q}}= \alpha\operatorname{-Isom}(\R^d)\times S_d^{++}\times(\R^d)^n\] representing the orientation position, and a finer description of the shape with an anisotropic metric associated. The next step is to define the action of the semidirect product $\alpha\operatorname{-Isom}(\R^d)\ltimes\operatorname{Diff}_{C_0^k}(\R^d)$ on the manifold $\tilde{\mathcal{Q}}$. First, the group $\alpha\operatorname{-Isom}(\R^d)$ acts on the shape space $\tilde{\mathcal{Q}}$ by
\[
(\alpha,R,T)\cdot (\alpha',R',T',\Sigma,(q_i)_i) = \left(\alpha\alpha',RR', \alpha R T'+T, \alpha^2R\Sigma R^\top, (\alpha R q_i+T)_i\right)
\]
with infinitesimal action
\[
\xi_{\alpha,R,T,\Sigma,q_i}^{\alpha\operatorname{-Isom}}(s,A,\tau) = \big(s\alpha, AR, (s+A)T+\tau, 2s\Sigma+A\Sigma-\Sigma A, sq_i+Aq_i+sT+\tau\big).
\]
Moreover the group of diffeomorphisms simply acts on $\tilde{\mathcal{Q}}$ by transporting the points.
\[
\varphi\cdot (\alpha',R',T',\Sigma,(q_i)_i) =  (\alpha',R',T',\Sigma,(\varphi(q_i))_i)
\]
The combined action of the total group of deformations is thus given by
\begin{align*}
    \left((\alpha,R,T),\varphi \right)\cdot(\alpha',R',T',\Sigma,(q_i)_i) &\coloneqq (\alpha,R,T)\cdot \varphi\cdot (\alpha',R',T',\Sigma,(q_i)_i)\\
    &= \left(\alpha\alpha',RR', \alpha R T'+T, \alpha^2R\Sigma R^\top, (\alpha R \varphi(q_i)+T)_i\right)
\end{align*}
Differentiating this expression on the left at identity, we get the infinitesimal action
\begin{equation}
    \xi : \left\{\begin{array}{cll}
         \left(\alpha\text{-}\mathfrak{isom}(\R^d)\times C_0^k(\R^d,\R^d)\right)\times \tilde{\mathcal{Q}} &\longrightarrow & T\tilde{\mathcal{Q}}  \\
         (s,A,\tau,u), (\alpha,R,T,\Sigma,q) & \longmapsto & \big(s\alpha, AR, (s+A)T+\tau, 2s\Sigma+A\Sigma-\Sigma A,\\ & & sq+Aq+u(q)+sT+\tau\big)
    \end{array}
    \right.
\end{equation}

\subsubsection{The matching problem}
We suppose we are given a data attachment term $\mathcal{D}:\tilde{\mathcal{Q}}\to \R$. We recall that we are given a source shape $\bqS\in\operatorname{Lmk}_n(\R^d)$ together with a source metric $\Sigma_S\in S_d^{++}$. Note that, since the metric $\Sigma_S$ is transported and is used to compute the energy of the vector field transporting the shape $\vert u \vert^2$, the problem and the sub-Riemannian structure we are studying here is not right-invariant anymore. The induced structure on $\tilde{\mathcal{Q}}$ is given by the bundle $\overline{V} \hookrightarrow  T\tilde{\mathcal{Q}}$ such that for any $(\alpha,R,T,\Sigma,\bq)\in \tilde{\mathcal{Q}}$, we get 
\[
\overline{V}_{\alpha,R,T,\Sigma,\bq} = \alpha\text{-}\mathfrak{isom}(\R^d)\oplus V_\Sigma,
\]
where $V_\Sigma$ is the RKHS induced by the Gaussian kernel $k_D$, together with the bundle morphism $\xi:\overline{V}\to T\tilde{\mathcal{Q}}$ and the metric defined on $\overline{V}$ by 
\[
\langle (s,A,\tau,u),(s,A,\tau,u)\rangle_{\alpha,R,T,\Sigma,\bq} = \lvert s\rvert^2+\lvert A\rvert^2+\lvert\tau\rvert^2+\lvert u\rvert^2_{V_\Sigma}
\]
Moreover, even though this structure is not right-invariant, it depends here only on the metric $\Sigma$ and is still invariant by the action of diffeomorphism part.
We define, as in section \ref{Sec:var_pb}, the following variational problem
\begin{eqnarray}
    \label{Ani:var_pb_an_beforechgtvar}
    \inf_{(s_t,A_t,\tau_t,u_t) \in L^2([0,1],\overline{V})} J(s_t,A_t,\tau_t,u_t) &=& \frac{1}{2}\int_0^1\lvert s_t\rvert^2+\lvert A_t\rvert^2+\lvert\tau_t\rvert^2+\lvert u_t\rvert^2_{V_{\Sigma_t}} \, dt \\ & & \qquad \qquad \qquad+ \mathcal{D}(\alpha_1,R_1,T_1,\Sigma_1,\bq_1)\notag\\[0.5em]
     \text{ s.t }& &     \left\{
        \begin{array}{l}
            (\dot{\alpha}_t,\dot{R}_t,\dot{T}_t,\dot{\Sigma}_t,\dot{\bq}_t) = \xi_{\alpha_t,R_t,T_t,\Sigma_t,\bq_t}(s_t,A_t,\tau_t,u_t) \\
            (\alpha_0,R_0,T_0,\Sigma_0,\bq_0)=(1,Id,0,\Sigma_S,\bqS)      
        \end{array} 
        \right.\notag
\end{eqnarray}
To simplify this problem, we can actually consider the same change of variables as in \eqref{goat_change_variable},
\begin{equation*}
    \left\{\begin{array}{l}
         \tilde{\bq}_i = (\alpha,R,T)^{-1}\cdot \bq_i= \alpha^{-1}R^\top \bq_i - \alpha^{-1}R^\top T  \\
          \Tilde{\Sigma} = (\alpha,R,T)^{-1}\cdot \Sigma = \alpha^{-2}R^\top \Sigma R
    \end{array}
    \right.
\end{equation*}
which removes the action of the Lie group $\alpha\operatorname{-Isom}(\R^d)$ on the space of landmarks. The dynamic for these new variables is given by the infinitesimal action
\begin{equation}\label{Ani:tltxi}
    \tilde{\xi} : \left\{\begin{array}{cll}
         \left(\alpha\text{-}\mathfrak{isom}(\R^d)\times C_0^k(\R^d,\R^d)\right)\times \tilde{\mathcal{Q}} &\longrightarrow & T\tilde{\mathcal{Q}}  \\
         (s,A,\tau,u), (\alpha,R,T,\tilde{\Sigma},\tilde{q}) & \longmapsto & \big(s\alpha, AR, (s+A)T+\tau,0, \alpha^{-1}R^\top u(\alpha R\tilde{\bq}+ T)\big)
    \end{array}
    \right.
\end{equation}
In particular, we now see that the metric $\tilde{\Sigma}$ becomes a constant of the dynamic. In addition, by defining $\tilde{u} := \alpha^{-1}R^\top u(\alpha R\cdot+ T)$, proposition \ref{Ani:isom_RKHS} states that $\lvert\tilde{u}\rvert_{V_{\tilde{\Sigma}}}=\lvert u \rvert_{V_\Sigma}$  and we obtain the equivalent following matching problem (cf. proposition \ref{eq_var_pb}).
\begin{eqnarray}
    \label{Ani:var_pb_an_afterchgtvar}
    \inf_{(s_t,A_t,\tau_t,\tilde{u}_t) \in L^2([0,1],\alpha\text{-}\mathfrak{isom}(\R^d)\oplus V_{\Sigma_S})}& \tilde{J}(s_t,A_t,\tau_t,\tilde{u}_t) &= \frac{1}{2}\int_0^1\lvert s_t\rvert^2+\lvert A_t\rvert^2+\lvert\tau_t\rvert^2+\lvert \tilde{u}_t\rvert^2_{V_{\Sigma_S}} \, dt \\  & & +\mathcal{D}(\alpha_1,R_1,T_1,\Sigma_1,(\alpha_1,R_1,T_1)\cdot\tilde{\bq}_1)\notag
\end{eqnarray}
with dynamic given by
\begin{equation*}
    \left\{\begin{array}{l}
            (\dot{\alpha}_t,\dot{R}_t,\dot{T}_t) = (s_t\alpha_t,A_tR_t,(s_t+A_t)T_t+\tau_t) \\
            \tilde{\Sigma}_t=\Sigma_S\\
            \dot{\tilde{\bq}}_t=\tilde{u}_t(\tilde{\bq}_t) \\
            (\alpha_0,R_0,T_0,\Sigma_0,\tilde{\bq})=(1,I_d,0,\Sigma_S,\bqS)      
        \end{array} 
        \right.\notag
\end{equation*}
In the next proposition, we characterize the critical points of problem \eqref{Ani:var_pb_an_afterchgtvar}.
\begin{prop}[Hamiltonian characterization of \eqref{Ani:var_pb_an_afterchgtvar}]
The Hamiltonian $H : T^*\tilde{\mathcal{Q}}\to\R$ associated with the matching problem \eqref{Ani:var_pb_an_afterchgtvar} is given by :
    \begin{equation}
    \label{Ani:Hamil_Dtransport}
        H=\frac{1}{2}\left(\lvert p^\alpha \alpha+p^\tau T^\top\rvert^2+\lvert p^AR^\top+\operatorname{Skew}(p^\tau T^\top)\rvert^2+\lvert p^\tau \rvert^2+\left\lvert K_{\tilde{\Sigma}}\sum_i\delta_{\tilde{q}_i}^{ \tilde{p}_i}\right\rvert_{V_{\tilde{\Sigma}}}^2 \right)
    \end{equation}
where $\operatorname{Skew}(A)=\frac{1}{2}(A-A^\top)$ denotes the skew-symmetric of a matrix $A \in M_d(\R)$.
\end{prop}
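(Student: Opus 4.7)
The strategy is exactly the one used for Proposition~\ref{critical_point_H}: write the extended control Hamiltonian $H_{\text{ext}}$ on the cotangent bundle $T^*\tilde{\mathcal{Q}}$ associated with the constrained dynamics of \eqref{Ani:var_pb_an_afterchgtvar}, then apply the Pontryagin-type optimality condition $\partial_{(s,A,\tau,\tilde u)}H_{\text{ext}}=0$ to eliminate the controls and obtain the reduced Hamiltonian $H$. Concretely, using the infinitesimal action $\tilde\xi$ from \eqref{Ani:tltxi} and noting that $\dot{\tilde\Sigma}_t=0$ so the $p^{\tilde\Sigma}$-term drops out, the extended Hamiltonian reads
\[
H_{\text{ext}}=(p^\alpha\mid s\alpha)+(p^A\mid AR)+(p^\tau\mid (s+A)T+\tau)+\sum_i \bigl(\tilde p_i\mid \tilde u(\tilde q_i)\bigr)-\tfrac12\bigl(|s|^2+|A|^2+|\tau|^2+|\tilde u|_{V_{\tilde\Sigma}}^2\bigr).
\]

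Next, I would rewrite each pairing in a form where the coefficients of the controls appear explicitly. For the rotation block, using the Frobenius pairing $\langle M_1,M_2\rangle_F=\operatorname{Tr}(M_1^\top M_2)$ one has $(p^A\mid AR)=\langle p^A R^\top,A\rangle_F$, and similarly $(p^\tau\mid AT)=\langle p^\tau T^\top,A\rangle_F$, so that the total linear coefficient of $A\in\mathfrak{so}_d$ equals $\operatorname{Skew}(p^AR^\top+p^\tau T^\top)$ (only the skew-symmetric part acts on skew matrices). With the usual right-trivialization identification of $T_R^*SO_d$ with skew-symmetric matrices, $p^AR^\top$ is itself skew, so this reduces to $p^AR^\top+\operatorname{Skew}(p^\tau T^\top)$. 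The coefficient of $s\in\R$ is the scalar $p^\alpha\alpha+(p^\tau)^\top T$, written $p^\alpha\alpha+p^\tau T^\top$ in the notation of the statement; the coefficient of $\tau\in\R^d$ is $p^\tau$; and for the diffeomorphism control, the reproducing property gives $\sum_i(\tilde p_i\mid \tilde u(\tilde q_i))=\bigl(\sum_i\delta_{\tilde q_i}^{\tilde p_i}\,\big|\,\tilde u\bigr)_{V_{\tilde\Sigma}^*\times V_{\tilde\Sigma}}$.

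Then I would set the partial derivatives to zero, yielding the optimal controls
\[
s^*=p^\alpha\alpha+p^\tau T^\top,\quad A^*=p^AR^\top+\operatorname{Skew}(p^\tau T^\top),\quad \tau^*=p^\tau,\quad \tilde u^*=K_{\tilde\Sigma}\sum_i\delta_{\tilde q_i}^{\tilde p_i},
\]
and substitute back. Because $H_{\text{ext}}$ is quadratic in the controls with maximizers $(s^*,A^*,\tau^*,\tilde u^*)$, plugging them in gives $H=\tfrac12(|s^*|^2+|A^*|^2+|\tau^*|^2+|\tilde u^*|_{V_{\tilde\Sigma}}^2)$, which is exactly formula \eqref{Ani:Hamil_Dtransport}; the last term simplifies via the reproducing property to $\bigl\langle \sum_i\delta_{\tilde q_i}^{\tilde p_i},K_{\tilde\Sigma}\sum_j\delta_{\tilde q_j}^{\tilde p_j}\bigr\rangle=\lvert K_{\tilde\Sigma}\sum_i\delta_{\tilde q_i}^{\tilde p_i}\rvert_{V_{\tilde\Sigma}}^2$.

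The only real subtlety, and the main source of care in writing this up, is the bookkeeping on $T^*SO_d$: one must be explicit about the trivialization used to identify $p^A$ with a matrix, justify that $p^AR^\top$ is skew-symmetric, and explain why only $\operatorname{Skew}(p^\tau T^\top)$ survives when paired against $A\in\mathfrak{so}_d$. Once this identification is fixed, the remaining computations are routine Legendre-transform substitutions, and the existence of critical points together with the Hamiltonian form of the geodesic equations is then inherited from the general result \cite[Theorem~3.15]{gga} invoked earlier (cf. Proposition~\ref{critical_point_H} and appendix~\ref{App:opt_cont}), since the shape-space structure of $\tilde{\mathcal{Q}}$ and the regularity of $\tilde\xi$ have already been established in Section~\ref{Sec:general_framework}.
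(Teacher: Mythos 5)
Your proposal follows exactly the paper's own argument: write the extended (pre-)Hamiltonian from the infinitesimal action \eqref{Ani:tltxi}, apply the optimality condition $\partial_{(s,A,\tau,\tilde u)}H=0$ to obtain the optimal controls $s=p^\alpha\alpha+T^\top p^\tau$, $A=p^AR^\top+\operatorname{Skew}(p^\tau T^\top)$, $\tau=p^\tau$, $\tilde u=K_{\tilde\Sigma}\sum_i\delta_{\tilde q_i}^{\tilde p_i}$, and substitute back. The extra care you take with the trivialization of $T^*\operatorname{SO}_d$ and the projection onto skew-symmetric matrices is a welcome elaboration of a step the paper leaves implicit, but it is the same proof.
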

\begin{proof}
We define the Hamiltonian corresponding to the problem \eqref{Ani:var_pb_an_afterchgtvar}
\begin{multline*}
H(\alpha,R,T,\tilde{\Sigma},\tilde{\bq},p^\alpha,p^A,p^{\tau},\tilde{p}^\Sigma,\tilde{\bp},s,A,\tau,\tilde{u})=(p^\alpha\mid s\alpha)+(p^A\mid AR)+(p^\tau\mid \tau + (A+s)T) + \\ \sum_i(\tilde{p}_i\mid \tilde{u}(\tilde{q}_i)) - 
        \frac{1}{2} \left(\lvert s\rvert^2+\lvert A\rvert^2 + \lvert\tau \rvert^2 + \lvert \tilde{u}\rvert_{\tilde{\Sigma}}^2\right)  
\end{multline*}
The condition on the controls
\begin{equation*}
    (\partial_sH,\partial_A H, \partial_{\tau} H, \partial_{\tilde{u}} H) = 0
\end{equation*}
therefore gives us
\[
\left\{\begin{array}{l}
     s=p^\alpha \alpha + T^{\top} p^\tau  \\
     A=p^AR^\top + \operatorname{Skew}(p^\tau T^\top) \\
     \tau=p^\tau \\
     \tilde{u} = K_{\tilde{\Sigma}} \sum_i\delta_{\tilde{q}_i}^{\tilde{p}_i}
\end{array}\right.
\] and the result follows.
\end{proof}

\paragraph{Numerical result} We experiment our new model on the same source and target of Figure \ref{Ani:fig:source_target}, using the same initial metric $\Sigma_S=\begin{bmatrix}
1 & 0  \\
0        & 0.1
\end{bmatrix}$, meaning we favor deformations along the $x$-axis. However, note that in this new model, any rotation $R$ of the source will also rotate the axis of the anisotropy which will induce a new metric $R\Sigma R^\top$. Consequently, the algorithm cannot rotate the shape to align it with the favored axis before stretching, as illustrated previously in Figure \ref{Ani:fig:toy_example_scaling_fixed}, since the axis will also rotate. Results of this new model are reported in Figure \ref{Ani:fig:toy_example_scaling_non_fixed}, where we observe that no rotations are applied and the stretching is done along the $y$-axis even though the energy cost is higher.
\begin{figure}[ht]
\centering
\includegraphics{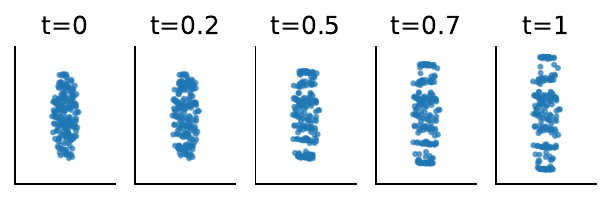}
\caption{\emph{Performing jointly rigid and non rigid registration minimizing problem \eqref{Ani:var_pb_an_afterchgtvar}, with an anisotropic kernel.} This experiment illustrates a matching task between the source (blue) and the target (red). The matching is performed through deformations generated by rotations and diffeomorphic deformation induced by an anisotropic Gaussian kernel with scale $\Sigma_S=\begin{bmatrix}
1 & 0  \\
0        & 0.1
\end{bmatrix}$.}
\label{Ani:fig:toy_example_scaling_non_fixed}
\end{figure}

\subsubsection{Invariance property of the non rigid part of the Hamiltonian}
Following section \ref{reduction}, and by proposition \ref{Ani:isom_RKHS} we see that the term in the Hamiltonian \eqref{Ani:Hamil_Dtransport} corresponding to the non-rigid part 
\[
H(\tilde{\Sigma},\tilde{\bq},\tilde{p}^\Sigma,\tilde{\bp}) = \frac{1}{2}\sum_{i,j} \left\langle \tilde{p}_i, k_{\tilde{\Sigma}}(\tilde{q}_i,\tilde{q}_j)\tilde{p}_j \right\rangle.
\]
is invariant by the action of the Lie group $\alpha\operatorname{-Isom}(\R^d)$.
This kernel is normalized in the sense that it allows to define Hamiltonians that are invariant under the action of rigid motions $\alpha\operatorname{-Isom}(\R^d)$:
\begin{prop}[$\alpha\operatorname{-Isom}(\R^d)$-invariance of the Hamiltonian]
    The group $\alpha\operatorname{-Isom}(\R^d)$  defines a canonical Hamiltonian action (cf. \ref{App:Ham_action}) on the symplectic manifold $T^*(S_d^{++}\times \operatorname{Lmk}_n)$, with momentum map 
    \begin{equation}
        \mu : \app{T^*(S_d^{++}\times \operatorname{Lmk}_n)}{\alpha\text{-}\mathfrak{isom}(\R^d)}{(p^\Sigma,p)}{\left(\sum_i\langle p_i,q_i\rangle + p^\Sigma \Sigma, \sum_i\operatorname{Skew}(p_iq_i^\top) + \operatorname{Skew}(p^\Sigma \Sigma), \sum_ip_i\right)}
    \end{equation}
    Moreover, the Hamiltonian H defined previously is invariant by the action of $\alpha\operatorname{-Isom}(\R^d)$ and the momentum map is a constant of the motion.
\end{prop}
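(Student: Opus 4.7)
The strategy is threefold: identify the cotangent-lifted action on $T^*(S_d^{++}\times \operatorname{Lmk}_n)$, compute its momentum map from the infinitesimal action, then combine the invariance of the non-rigid part of $H$ with Noether's theorem to conclude conservation.

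First, the action $(\alpha,R,T)\cdot(\Sigma,(q_i))=(\alpha^2R\Sigma R^\top,(\alpha R q_i + T))$ on the base manifold extends canonically to a symplectomorphic action on $T^*(S_d^{++}\times\operatorname{Lmk}_n)$ by the standard cotangent lift. As recalled in Section \ref{reduction} and in \cite{MARSDEN1974121,marsden1994introduction}, cotangent-lifted actions are always Hamiltonian, with momentum map $\hat{\mu}(X)(q,p)=(p\mid \xi_q X)$ and equivariance built in. Differentiating the action at the identity along $(s,A,\tau)\in \alpha\text{-}\mathfrak{isom}(\R^d)$ yields the infinitesimal action
\[
\xi_{\Sigma,\bq}(s,A,\tau) = \bigl(2s\Sigma + A\Sigma - \Sigma A,\, (sq_i + Aq_i + \tau)_i\bigr).
\]
Pairing this expression with $(p^\Sigma,\bp)$ via the Frobenius and Euclidean inner products and collecting the coefficients of $s\in\R$, $A\in\mathfrak{so}_d$, and $\tau\in\R^d$ separately produces the three components of $\mu$ announced in the proposition. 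The $\operatorname{Skew}(\cdot)$ projections appear precisely because the coefficient of $A$ is obtained by pairing with a skew-symmetric matrix, so only the skew-symmetric part of $\sum_i p_iq_i^\top + p^\Sigma\Sigma$ contributes to the trace.

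Second, to establish invariance of $H$, I would split it into its rigid and non-rigid parts. The rigid terms depend only on $(\alpha,R,T)$ and their conjugate momenta, on which the group $\alpha\operatorname{-Isom}(\R^d)$ acts trivially in the present construction (since the action is declared on $S_d^{++}\times \operatorname{Lmk}_n$ only), so invariance is automatic. For the non-rigid term $\lvert K_{\tilde{\Sigma}}\sum_i\delta_{\tilde{q}_i}^{\tilde{p}_i}\rvert_{V_{\tilde{\Sigma}}}^2$, the key ingredient is Proposition \ref{Ani:isom_RKHS}: since $u\mapsto d\varphi_{g^{-1}} u \circ \varphi_g$ is a Hilbert space isometry from $V_\Sigma$ onto $V_{g^{-1}\cdot\Sigma}$, dualizing under the Riesz isomorphism $K_\Sigma$ translates directly into the invariance of the quadratic form $\sum_{i,j}\langle \tilde{p}_i, k_{\tilde{\Sigma}}(\tilde{q}_i,\tilde{q}_j)\tilde{p}_j\rangle$ under the simultaneous cotangent action on $\tilde{\Sigma}$, the base points $\tilde{q}_i$, and the dual momenta $\tilde{p}_i$.

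Finally, once the action is Hamiltonian with momentum map $\mu$ and $H$ is $\alpha\operatorname{-Isom}(\R^d)$-invariant, Noether's theorem applied within the symplectic framework immediately yields that $\mu$ is constant along the Hamiltonian flow. The main technical obstacle is the careful bookkeeping of the cotangent-lifted action on the momenta, specifically checking that the lift of $\Sigma\mapsto \alpha^2R\Sigma R^\top$ acts on $p^\Sigma$ as $p^\Sigma\mapsto \alpha^{-2}R\,p^\Sigma R^\top$ and the lift of $q_i\mapsto \alpha Rq_i+T$ acts on $p_i$ as $p_i\mapsto \alpha^{-1}R\,p_i$, so that the isometry of Proposition \ref{Ani:isom_RKHS} transports correctly into the invariance of the kernel evaluation. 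Once the dual actions are fixed, the remaining verifications reduce to routine trace manipulations.
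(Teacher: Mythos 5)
Your proposal is correct and follows essentially the same route as the paper's (much terser) proof: identify the cotangent-lifted action, with the same explicit dual actions $p^\Sigma\mapsto\alpha^{-2}Rp^\Sigma R^\top$ and $p_i\mapsto\alpha^{-1}Rp_i$, note that cotangent lifts are Hamiltonian, read off the momentum map from the pairing with the infinitesimal action, and conclude invariance and conservation via Proposition \ref{Ani:isom_RKHS} and Noether's theorem. Your write-up simply fills in more of the intermediate bookkeeping than the paper does.
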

\begin{proof}
    The action of $\alpha\operatorname{-Isom}(\R^d)$ on $T^*(S_d^{++}\times \operatorname{Lmk}_n))$ is given by 
    \[(\alpha,R,T)\cdot(\Sigma,q,p^\Sigma,p) = \left(\alpha^2 R\Sigma R^\top,\alpha Rq+ T,\alpha^{-2}Rp^\Sigma R^\top,\alpha^{-1}Rp\right)\] 
    which is the cotangent lift of the action of $\alpha\operatorname{-Isom}(\R^d)$ on $\tilde{\mathcal{Q}}$, and therefore is a Hamiltonian action.
The rest of the proof follows from proposition \ref{Ani:isom_RKHS} and from Noether theorem.
\end{proof}

\subsubsection{Transport of the metric by diffeomorphisms}
In the previous variational problem \eqref{Ani:var_pb_an_beforechgtvar} (or equivalently \eqref{Ani:var_pb_an_afterchgtvar}),  the metric $\Sigma\in S_d^{++}$ representing the anisotropy of the shape was transported by the action of the group $\alpha\operatorname{-Isom}(\R^d)$. This section presents a method to transport this metric by the group of diffeomorphisms too.

The tricky part is that there is no natural action of diffeomorphisms on the space $S_d^{++}$ of metrics in $\R^d$. However, even though there is no such group action on this manifold, it is still possible to define an infinitesimal action of the space of $C_0^k$-vector fields on $S_d^{++}$. We assume that there exists a matrix 
\[
M : \app{ (\R^d)^n \times C_0^k(\R^d,\R^d)}{\R^{d\times d}}{(q,u)}{M_q(u)}
\]
such that the infinitesimal transport of a shape ($\bq,\Sigma)$ by a vector field $u\in C_0^k(\R^d,\R^d)$ is defined by
\[
\xi_{\Sigma,\bq}^{\Diff}= \left(M_{\bq}(u)\Sigma+\Sigma M_{\bq}(u)^\top,u(q_i)_i\right)
\]
\begin{rem}
    Here, the connection with the modular framework \cite{gris2015sub} becomes clearer. Indeed, we use the shape $q$ and the control $u$ to generate a matrix that transports the metric $D$ representing anisotropy in the shape.
\end{rem}
\begin{exam}
 $M : (q,u) \mapsto M_q(u) := \frac{1}{N}\sum_{i\leq N}du(q_i)$
represents the mean of the jacobian on the shape $(q_i)_{i\leq N}$ and thus summarizes how the vector field $u$ acts on the anisotropy.
\end{exam}
Now taking the sum of the two infinitesimal actions $\xi^{\alpha\operatorname{-Isom}}$ and $\xi^{\Diff}$, we can thus modify the previous infinitesimal action and define
\begin{equation*}
    \xi : \left\{\begin{array}{cll}
         \left(\alpha\text{-}\mathfrak{isom}(\R^d)\times C_0^k(\R^d,\R^d)\right)\times \tilde{\mathcal{Q}} &\longrightarrow & T\tilde{\mathcal{Q}}  \\
         (s,A,\tau,u), (\alpha,R,T,\Sigma,q) & \longmapsto & \big(s\alpha, AR, (s+A)T+\tau, 2s\Sigma+(A+M_{q}(u))\Sigma- \\ & &\Sigma(A-M_{q}(u)^\top), sq+Aq+u(q)+sT+\tau\big)
    \end{array}
    \right.
\end{equation*}
The same change of variables
\begin{equation*}
    \left\{\begin{array}{l}
         \tilde{q}_i = (\alpha,R,T)^{-1}\cdot q_i= \alpha^{-1}R^\top q_i - \alpha^{-1}R^\top T  \\
          \Tilde{\Sigma} = (\alpha,R,T)^{-1}\cdot \Sigma = \alpha^{-2}R^\top \Sigma R\\
          \tilde{u} = \alpha^{-1}R^\top u(\alpha R\cdot +T)
    \end{array}
    \right.
\end{equation*}
leads to a new dynamic associated with the infinitesimal action
\begin{equation*}
    \tilde{\xi} : \left\{\begin{array}{cll}
         \left(\alpha\text{-}\mathfrak{isom}(\R^d)\times C_0^k(\R^d,\R^d)\right)\times \tilde{\mathcal{Q}} &\longrightarrow & T\tilde{\mathcal{Q}}  \\
         (s,A,\tau,\tilde{u}), (\alpha,R,T,\tilde{\Sigma},\tilde{q}) & \longmapsto & \big(s\alpha, AR, (s+A)T+\tau, \tilde{M}_{\alpha,R,T,\tilde{q}}(\tilde{u})\tilde{\Sigma}+ \\ & & \tilde{\Sigma}\tilde{M}_{\alpha,R,T,\tilde{q}}(\tilde{u})^\top ,\, \tilde{u}(\tilde{q})\big)
    \end{array}
    \right.
\end{equation*}
where $\tilde{M}:\tilde{\mathcal{Q}}\times C_0^k(\R^d,\R^d)\to \R^{d\times d}$ is given by $\tilde{M}_{\alpha,R,T,\tilde{q}}(\tilde{u}) = R^\top M_{\alpha R\tilde{q}+T}\left(d\varphi_{(\alpha,R,T)}\tilde{u}\circ\varphi_{(\alpha,R,T)^{-1}} \right)R$ 
In such case, the new variable $\tilde{\Sigma}$ is not a constant of the dynamic anymore (as in \eqref{Ani:tltxi}) since it is transported by the vector field $u$.
The matching problem associated to this framework leads to the minimization of the following new functional
\begin{eqnarray}
    \label{Ani:var_pb_an_afterchgtvar_fin}
    \inf_{(s,A,\tau,u) \in L^2([0,1],\overline{V})}& \tilde{J}(s,A,\tau,\tilde{u}) &= \frac{1}{2}\int_0^1\lvert s\rvert^2+\lvert A\rvert^2+\lvert\tau\rvert^2+\lvert \tilde{u}\rvert^2_{V_{\tilde{\Sigma}}} \, dt  \\ & & +\mathcal{D}(\alpha_1,R_1,T_1,\Sigma_1,(\alpha_1,R_1,T_1)\cdot\tilde{\bq}_1) \notag
\end{eqnarray}
where $(\Tilde{\Sigma}_t,R_t,T_t,\tilde{q}_t)$ satisfies the dynamic
\[
\left\{\begin{array}{l}     
      (\dot{\alpha}_t,\dot{R}_t,\dot{T}_t,\dot{\Tilde{\Sigma}}_t,\dot{\tilde{\bq}}_t) =  \tilde{\xi}_{(\tilde{\Sigma}_t,R_t,T_t,\tilde{\bq}_t)}(s_t,A_t,\tau_t,u_t),\\
      (\alpha_0,R_0,T_0,\Tilde{\Sigma}_0,\tilde{q}_0) = (1,I_d,0,\Sigma_S,\bqS)
\end{array}\right.
\]
In the next proposition, we compute the Hamiltonian associated with the variational problem \eqref{Ani:var_pb_an_afterchgtvar_fin}:

\begin{prop}
    The Hamiltonian $H : T^*\tilde{\mathcal{Q}}\to\R$ associated with the matching problem \eqref{Ani:var_pb_an_afterchgtvar_fin} is given by :
    \begin{multline}
        H=\frac{1}{2}(\lvert p^\alpha \alpha+p^\tau  T^\top\rvert^2+\lvert p^AR^\top+\operatorname{Skew}( T^\top p^{\tau})\rvert^2+\lvert p^\tau\rvert^2)+ \\
        \frac{1}{2} \left\lvert K_{\Tilde{\Sigma}} \left(\sum_i\delta_{\tilde{q}_i}^{\tilde{p}_i} + 2M^*_{\alpha,R,T,\tilde{\bq}}(\tilde{p}^\Sigma\tilde{\Sigma})\right)\right\rvert_{V_{\tilde{D}}}^2  
    \end{multline}
\end{prop}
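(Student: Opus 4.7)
The plan is to follow the exact same recipe as for the previous proposition characterizing the Hamiltonian of \eqref{Ani:var_pb_an_afterchgtvar}: first write the extended Hamiltonian with controls, then extremize over the controls and substitute back. The new ingredient here is simply the additional coupling term between the vector field $\tilde{u}$ and the metric $\tilde{\Sigma}$ through the mapping $\tilde{M}$.

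First, using the infinitesimal action $\tilde{\xi}$ introduced just before the proposition, I would write the extended Hamiltonian on $T^*\tilde{\mathcal{Q}}\times (\alpha\text{-}\mathfrak{isom}(\R^d)\oplus C_0^k(\R^d,\R^d))$ as
\begin{align*}
\tilde{H} = & (p^\alpha\,|\, s\alpha) + (p^A\,|\, AR) + (p^\tau\,|\, \tau + (A+s)T) \\
& + \left(\tilde{p}^\Sigma\,\big|\, \tilde{M}_{\alpha,R,T,\tilde{\bq}}(\tilde{u})\tilde{\Sigma} + \tilde{\Sigma}\tilde{M}_{\alpha,R,T,\tilde{\bq}}(\tilde{u})^\top \right) \\
& + \sum_i (\tilde{p}_i \,|\, \tilde{u}(\tilde{q}_i)) - \tfrac{1}{2}\big(\lvert s \rvert^2 + \lvert A\rvert^2 + \lvert \tau\rvert^2 + \lvert \tilde{u}\rvert^2_{V_{\tilde{\Sigma}}}\big).
\end{align*}
The only genuinely new term compared to the previous proposition is the pairing $(\tilde{p}^\Sigma\,|\, \tilde{M}\tilde{\Sigma}+\tilde{\Sigma}\tilde{M}^\top)$.

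Next, I would set $\partial_s\tilde H = \partial_A\tilde H = \partial_\tau\tilde H = 0$. These three conditions are exactly as in the proof of the previous proposition and yield
\[
s = p^\alpha\alpha + p^\tau T^\top, \quad A = p^AR^\top + \operatorname{Skew}(p^\tau T^\top), \quad \tau = p^\tau.
\]
Squaring these gives the first three terms of the stated Hamiltonian.

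The nontrivial step is $\partial_{\tilde u}\tilde H = 0$. Using the trace/transposition identity $(\tilde p^\Sigma \,|\, M\tilde\Sigma + \tilde\Sigma M^\top) = 2\,\mathrm{tr}(\tilde\Sigma\tilde p^\Sigma M)$ valid because $\tilde p^\Sigma$ and $\tilde\Sigma$ are symmetric, the map $\tilde u \mapsto (\tilde p^\Sigma\,|\, \tilde M(\tilde u)\tilde\Sigma + \tilde\Sigma \tilde M(\tilde u)^\top)$ is linear in $\tilde u$, so its differential pairs against $\tilde u$ as a linear form, which I denote $2\tilde M^*_{\alpha,R,T,\tilde{\bq}}(\tilde p^\Sigma\tilde\Sigma) \in V_{\tilde\Sigma}^*$ by definition of the adjoint of $\tilde M_{\alpha,R,T,\tilde{\bq}}(\cdot)$ (acting on $V_{\tilde\Sigma}$). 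Similarly the landmarks term contributes the usual $\sum_i \delta_{\tilde q_i}^{\tilde p_i}$. Setting the derivative equal to $\tilde u$ in $V_{\tilde\Sigma}^*$ (via the Riesz isometry $K_{\tilde\Sigma}$) yields
\[
\tilde u = K_{\tilde\Sigma}\!\left(\sum_i \delta_{\tilde q_i}^{\tilde p_i} + 2\tilde M^*_{\alpha,R,T,\tilde{\bq}}(\tilde p^\Sigma\tilde\Sigma)\right).
\]

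Finally, substituting all four optimal controls into $\tilde H$ and using that the linear pairings collapse into the half-squared-norms (standard manipulation with $\langle p, Ku\rangle - \tfrac12 |u|^2$ giving $\tfrac12 |Kp|^2$ at the critical point), one obtains the announced expression. The one care point I anticipate is keeping the transpose/skew-symmetric conventions consistent in the $A$ and $s$ derivations (in particular that $p^A \in \R^{d\times d}$ pairs naturally against the Lie algebra element $A$ via $(p^A|AR)=\operatorname{tr}((AR)^\top p^A)$, which is what produces the $\operatorname{Skew}$ in the optimal $A$); everything else is a direct transposition of the previous proof with the extra $\tilde M^*$ contribution.
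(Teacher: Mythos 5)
Your proposal follows exactly the paper's own argument: write the extended pre-Hamiltonian including the new pairing $(\tilde{p}^\Sigma\mid \tilde{M}(\tilde{u})\tilde{\Sigma}+\tilde{\Sigma}\tilde{M}(\tilde{u})^\top)$, extremize over the controls $(s,A,\tau,\tilde{u})$ to obtain the same four optimality relations (including $\tilde{u}=K_{\tilde{\Sigma}}(\sum_i\delta_{\tilde{q}_i}^{\tilde{p}_i}+2\tilde{M}^*(\tilde{p}^\Sigma\tilde{\Sigma}))$), and substitute back. Your added justification of the factor $2$ via symmetry of $\tilde{p}^\Sigma$ and $\tilde{\Sigma}$ is a welcome detail the paper leaves implicit, but the route is the same.
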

\begin{proof}
We define the Hamiltonian corresponding to the problem \eqref{Ani:var_pb_an_afterchgtvar_fin}
\begin{multline*}
H(\alpha,R,T,\tilde{\Sigma},\tilde{\bq},p^\alpha,p^A,p^{\tau},\tilde{p}^\Sigma,\tilde{\bp},s,A,\tau,\tilde{u})=(p^\alpha\mid s\alpha)+(p^A\mid AR)+(p^\tau\mid \tau + (A+s)T) + \\ (\tilde{p}^\Sigma\mid  M_{\alpha,R,T,\tilde{q}}(\tilde{u})\Tilde{\Sigma}+ \Tilde{\Sigma}M_{\alpha,R,T,\tilde{q}}(\tilde{u})^\top)+ \sum_i(\tilde{p}_i\mid \tilde{u}(\tilde{q}_i)) - 
        \frac{1}{2} \left(\lvert s\rvert^2+\lvert A\rvert^2 + \lvert\tau \rvert^2 + \lvert \tilde{u}\rvert_{\Tilde{\Sigma}}^2\right)  
\end{multline*}
The condition on the controls
\begin{equation*}
    (\partial_sH,\partial_A H, \partial_{\tau} H, \partial_{\tilde{u}} H) = 0
\end{equation*}
therefore gives us
\[
\left\{\begin{array}{l}
     s=p^\alpha \alpha + T^{\top} p^\tau  \\
     A=p^AR^\top + \operatorname{Skew}(p^\tau T^\top) \\
     \tau=p^\tau \\
     \tilde{u} = K_{\Tilde{\Sigma}} \left(\sum_i\delta_{\tilde{q}_i}^{\tilde{p}_i} + 2M^*_{\alpha,R,T,\tilde{\bq}}(p^\Sigma\tilde{\Sigma})\right)
\end{array}\right.
\] and the result follows.
\end{proof}

\paragraph{Acknowledgments.} The authors would also like to express their gratitude to Sylvain Arguillère (University of Lille), Anton François (ENS Paris-Saclay), Barbara Gris (Sorbonne University), Irène Kaltenmark (Université Paris Cité) and Alain Trouvé (ENS Paris-Saclay) for their insightful comments and valuable discussions that contributed to improve the quality of this work.

%\nocite{*}
\addcontentsline{toc}{section}{References}
% \bibliographystyle{abbrv}
% \bibliography{ref}
\printbibliography

@misc{general_setting,
  doi = {10.48550/ARXIV.1504.01767},  
  url = {https://arxiv.org/abs/1504.01767},  
  author = {Arguillère, Sylvain},  
  keywords = {Differential Geometry (math.DG), FOS: Mathematics, FOS: Mathematics, 53C17 (Primary), 49K15 (Secondary)}, 
  title = {The general setting for Shape Analysis}, 
  publisher = {arXiv},  
  year = {2015},
  copyright = {arXiv.org perpetual, non-exclusive license}
}

@article{ARGUILLERE2015139,
title = {Shape deformation analysis from the optimal control viewpoint},
journal = {Journal de Mathématiques Pures et Appliquées},
volume = {104},
number = {1},
pages = {139-178},
year = {2015},
issn = {0021-7824},
doi = {https://doi.org/10.1016/j.matpur.2015.02.004},
url = {https://www.sciencedirect.com/science/article/pii/S0021782415000203},
author = {Sylvain Arguillère and Emmanuel Trélat and Alain Trouvé and Laurent Younes},
keywords = {Shape deformation analysis, Optimal control, Reproducing kernel Hilbert spaces, Pontryagin maximum principle, Geodesic equations}
}

@article{arguillère_trélat_2017, title={SUB-RIEMANNIAN STRUCTURES ON GROUPS OF DIFFEOMORPHISMS}, volume={16}, DOI={10.1017/S1474748015000249}, number={4}, journal={Journal of the Institute of Mathematics of Jussieu}, publisher={Cambridge University Press}, author={Arguillère, Sylvain and Trélat, Emmanuel}, year={2017}, pages={745–785}}

@misc{bauer2023regularity,
      title={Regularity and completeness of half-Lie groups}, 
      author={Martin Bauer and Philipp Harms and Peter W. Michor},
      year={2023},
      eprint={2302.01631},
      archivePrefix={arXiv},
      primaryClass={math.DG}
}

@article{glo,
  author       = {Gl\"ockner, Helge},
  title        = {{Measurable regularity properties of infinite-dimensional Lie groups}},
  journal      = {arXiv preprint arXiv:1601.02568},
  year         = {2016},
  note         = {Preprint},
}

@book{Younes2019,
	author = {L. Younes},
	title = {Shapes and diffeomorphisms},
	publisher = {Springer},
	year = {2019}
}

@article{risser2011simultaneous,
  title={Simultaneous multi-scale registration using large deformation diffeomorphic metric mapping},
  author={Risser, Laurent and Vialard, Fran{\c{c}}ois-Xavier and Wolz, Robin and Murgasova, Maria and Holm, Darryl D and Rueckert, Daniel},
  journal={IEEE transactions on medical imaging},
  volume={30},
  number={10},
  pages={1746--1759},
  year={2011},
  publisher={IEEE}
}

@article{sommer2013sparse,
  title={Sparse multi-scale diffeomorphic registration: the kernel bundle framework},
  author={Sommer, Stefan and Lauze, Fran{\c{c}}ois and Nielsen, Mads and Pennec, Xavier},
  journal={Journal of mathematical imaging and vision},
  volume={46},
  number={3},
  pages={292--308},
  year={2013},
  publisher={Springer}
}

@article{MMRI,
author = {Bruveris, M. and Gay-Balmaz, François and Holm, Darryl and Ratiu, Tudor},
year = {2009},
month = {12},
pages = {},
title = {The Momentum Map Representation of Images},
volume = {21},
journal = {Journal of Nonlinear Science},
doi = {10.1007/s00332-010-9079-5}
}

@article{
BME,
author = {Michael Miller  and Daniel Tward  and Alain Trouvé },
title = {Molecular Computational Anatomy: Unifying the Particle to Tissue Continuum via Measure Representations of the Brain},
journal = {BME Frontiers},
volume = {2022},
number = {},
pages = {},
year = {2022},
doi = {10.34133/2022/9868673},
URL = {https://spj.science.org/doi/abs/10.34133/2022/9868673},
eprint = {https://spj.science.org/doi/pdf/10.34133/2022/9868673},
abstract = {Objective. The objective of this research is to unify the molecular representations of spatial transcriptomics and cellular scale histology with the tissue scales of computational anatomy for brain mapping. Impact Statement. We present a unified representation theory for brain mapping based on geometric varifold measures of the microscale deterministic structure and function with the statistical ensembles of the spatially aggregated tissue scales. Introduction. Mapping across coordinate systems in computational anatomy allows us to understand structural and functional properties of the brain at the millimeter scale. New measurement technologies in digital pathology and spatial transcriptomics allow us to measure the brain molecule by molecule and cell by cell based on protein and transcriptomic functional identity. We currently have no mathematical representations for integrating consistently the tissue limits with the molecular particle descriptions. The formalism derived here demonstrates the methodology for transitioning consistently from the molecular scale of quantized particles—using mathematical structures as first introduced by Dirac as the class of generalized functions—to the tissue scales with methods originally introduced by Euler for fluids. Methods. We introduce two mathematical methods based on notions of generalized functions and statistical mechanics. We use geometric varifolds, a product measure on space and function, to represent functional states at the micro-scales—electrophysiology, molecular histology—integrated with a Boltzmann-like program to pass from deterministic particle descriptions to empirical probabilities on the functional states at the tissue scales. Results. Our space-function varifold representation provides a recipe for traversing from molecular to tissue scales in terms of a cascade of linear space scaling composed with nonlinear functional feature mapping. Following the cascade implies every scale is a geometric measure so that a universal family of measure norms can be introduced which quantifies the geodesic connection between brains in the orbit independent of the probing technology, whether it be RNA identities, Tau or amyloid histology, spike trains, or dense MR imagery. Conclusions. We demonstrate a unified brain mapping theory for molecular and tissue scales based on geometric measure representations. We call the consistent aggregation of tissue scales from particle and cellular scales, molecular computational anatomy.}}

@article{article,
author = {Trouvé, Alain},
year = {1995},
month = {01},
pages = {},
title = {An Infinite Dimensional Group Approach for Physics based Models in Pattern Recognition},
journal = {International Journal of Computer Vision - IJCV}
}

@article{articleTro1995,
author = {Trouvé, Alain},
year = {1995},
month = {01},
pages = {},
title = {An Infinite Dimensional Group Approach for Physics based Models in Pattern Recognition},
journal = {International Journal of Computer Vision - IJCV}
}

@article{Satzer1977,
  author    = {Satzer, William J.},
  title     = {Canonical Reduction of Mechanical Systems Invariant Under Abelian Group Actions with an Application to Celestial Mechanics},
  journal   = {Indiana University Mathematics Journal},
  volume    = {26},
  number    = {5},
  pages     = {951--976},
  year      = {1977},
  publisher = {Indiana University Mathematics Department},
  url       = {http://www.jstor.org/stable/24891569},
  note      = {Accessed: 2025-08-08}
}

@article{Charon_2013,
  title={The Varifold Representation of Nonoriented Shapes for Diffeomorphic Registration},
  volume={6},
  ISSN={1936-4954},
  url={http://dx.doi.org/10.1137/130918885},
  DOI={10.1137/130918885},
  number={4},
  journal={SIAM Journal on Imaging Sciences},
  publisher={Society for Industrial \& Applied Mathematics (SIAM)},
  author={Charon, Nicolas and Trouv{\'e}, Alain},
  year={2013},
  month=jan,
  pages={2547--2580},
}

@article{MODIN20191009,
title = {A multiscale theory for image registration and nonlinear inverse problems},
journal = {Advances in Mathematics},
volume = {346},
pages = {1009-1066},
year = {2019},
issn = {0001-8708},
doi = {https://doi.org/10.1016/j.aim.2019.02.014},
url = {https://www.sciencedirect.com/science/article/pii/S0001870819301057},
author = {Klas Modin and Adrian Nachman and Luca Rondi},
keywords = {Multiscale decomposition, Image registration, Diffeomorphisms, LDDMM, Inverse problems, Calderón problem},
abstract = {In an influential paper, Tadmor et al. (2004) [42] introduced a hierarchical decomposition of an image as a sum of constituents of different scales. Here we construct analogous hierarchical expansions for diffeomorphisms, in the context of image registration, with the sum replaced by composition of maps. We treat this as a special case of a general framework for multiscale decompositions, applicable to a wide range of imaging and nonlinear inverse problems. As a paradigmatic example of the latter, we consider the Calderón inverse conductivity problem. We prove that we can simultaneously perform a numerical reconstruction and a multiscale decomposition of the unknown conductivity, driven by the inverse problem itself. We provide novel convergence proofs which work in the general abstract settings, yet are sharp enough to prove that the hierarchical decomposition of Tadmor, Nezzar and Vese converges for arbitrary functions in L2, a problem left open in their paper. We also give counterexamples that show the optimality of our general results.}
}

@article{debroux2023multiscale,
  title={A Multiscale Deformation Representation},
  author={Debroux, No{\'e}mie and Le Guyader, Carole and Vese, Luminita A},
  journal={SIAM Journal on Imaging Sciences},
  volume={16},
  number={2},
  pages={802--841},
  year={2023},
  publisher={SIAM}
}

@book{marsden1994introduction,
  title={Introduction to Mechanics and Symmetry},
  author={Marsden, Jerrold E. and Ratiu, Tudor S.},
  year={1994},
  publisher={Springer}
}

@article{MARSDEN1974121,
title = {Reduction of symplectic manifolds with symmetry},
journal = {Reports on Mathematical Physics},
volume = {5},
number = {1},
pages = {121-130},
year = {1974},
issn = {0034-4877},
doi = {https://doi.org/10.1016/0034-4877(74)90021-4},
url = {https://www.sciencedirect.com/science/article/pii/0034487774900214},
author = {Jerrold Marsden and Alan Weinstein},
abstract = {We give a unified framework for the construction of symplectic manifolds from systems with symmetries. Several physical and mathematical examples are given; for instance, we obtain Kostant’s result on the symplectic structure of the orbits under the coadjoint representation of a Lie group. The framework also allows us to give a simple derivation of Smale's criterion for relative equilibria. We apply our scheme to various systems, including rotationally invariant systems, the rigid body, fluid flow, and general relativity.}
}

@misc{glaunes2005transport,
  title={Transport par diff{\'e}omorphismes de points, de mesures et de courants pour la comparaison de formes et l'anatomie num{\'e}rique},
  author={Glaun{\`e}s, Joan},
  year={2005},
}

@article{diez2024symplectic,
  title={Symplectic Reduction in Infinite Dimensions},
  author={Diez, Tobias and Rudolph, Gerd},
  journal={arXiv preprint arXiv:2409.05829},
  year={2024}
}

@book{sympl_geom_dasilva,
  title={Lectures on symplectic geometry},
  author={Da Silva, Ana Cannas and Da Salva, A Cannas},
  volume={3575},
  year={2008},
  publisher={Springer}
}

@book{koszul1965lectures,
  author    = {Jean-Louis Koszul},
  title     = {Lectures on Groups of Transformations},
  publisher = {Tata Institute of Fundamental Research},
  year      = {1965},
  series    = {Tata Institute of Fundamental Research Lectures on Mathematics and Physics},
  volume    = {32},
  note      = {Notes by R. R. Simha and R. Sridharan},
  address   = {Bombay},
  pages     = {97},
}

@article{gga,
  author       = {Thomas Pierron and Alain Trouv{\'e}},
  title        = {The graded group action framework for sub‐Riemannian orbit models in shape spaces},
  journal      = {arXiv preprint arXiv:2406.16930},
  year         = {2024},
  note         = {preprint – submitted June 14, 2024},
}

@article{Trouve1998,
  author       = {Alain Trouv\'{e}},
  title        = {Diffeomorphism groups and pattern matching in image analysis},
  journal      = {International Journal of Computer Vision},
  volume       = {28},
  number       = {3},
  pages        = {213--221},
  year         = {1998},
  doi          = {10.1023/A:1008016003717},
}

@article{BegMillerTrouveYounes2005,
  author    = {Mirza F. Beg and Michael I. Miller and Alain Trouv{\'e} and Laurent Younes},
  title     = {Computing Large Deformation Metric Mappings via Geodesic Flows of Diffeomorphisms},
  journal   = {International Journal of Computer Vision},
  volume    = {61},
  number    = {2},
  pages     = {139--157},
  year      = {2005},
  doi       = {10.1023/B:VISI.0000043755.93987.aa},
}

@phdthesis{ArguillereThesis,
  author       = {Sylvain Arguillère},
  title        = {Géométrie sous-riemannienne en dimension infinie et applications à l'analyse mathématique des formes},
  school       = {Université Pierre et Marie Curie (Paris VI)},
  year         = {2014},
  language     = {French},
  nnt          = {2014PA066144},
}

@unpublished{Pierron2024,
  author       = {Thomas Pierron},
  title        = {A right-invariant sub-Riemannian setting for Large deformation models},
  year         = {2024},
  note         = {Preprint, under review},
}

@article{risser2010,
author = {Risser, Laurent and Vialard, François-Xavier and Wolz, Robin and Holm, Darryl and Rueckert, Daniel},
year = {2010},
month = {09},
pages = {610-7},
title = {Simultaneous Fine and Coarse Diffeomorphic Registration: Application to Atrophy Measurement in Alzheimer’s Disease},
volume = {13},
isbn = {978-3-642-15744-8},
journal = {Medical image computing and computer-assisted intervention : MICCAI ... International Conference on Medical Image Computing and Computer-Assisted Intervention},
doi = {10.1007/978-3-642-15745-5_75}
}

@article{BruRiVia,
author = {Bruveris, Martins and Risser, Laurent and Vialard, Fran\c{c}ois-Xavier},
title = {Mixture of Kernels and Iterated Semidirect Product of Diffeomorphisms Groups},
journal = {Multiscale Modeling \& Simulation},
volume = {10},
number = {4},
pages = {1344-1368},
year = {2012},
doi = {10.1137/110846324},
    abstract = { In the framework of large deformation diffeomorphic metric mapping (LDDMM), a multiscale theory for the diffeomorphism group is developed based on previous works of the authors. The purpose of this paper is (1) to develop in detail a variational approach for multiscale analysis of diffeomorphisms, (2) to generalize to several scales the the semidirect product representation, and (3) to illustrate the resulting diffeomorphic decomposition on synthetic and real images. We also show that the so-called kernel bundle method and the mixture of kernels are equivalent. }
}

@article{sommer2011,
  TITLE = {{Kernel Bundle EPDiff: Evolution Equations for Multi-Scale Diffeomorphic Image Registration}},
  AUTHOR = {Sommer, Stefan and Lauze, Fran{\c c}ois and Nielsen, Mads and Pennec, Xavier},
  URL = {https://inria.hal.science/hal-00644965},
  BOOKTITLE = {{Scale Space and Variational Methods in Computer Vision Scale Space and Variational Methods in Computer Vision - Third International Conference, SSVM 2011}},
  ADDRESS = {Ein-Gedi, Israel},
  EDITOR = {Bruckstein and A.M. and ter Haar Romeny and B. and Bronstein and A.M. and Bronstein and M.M.},
  PUBLISHER = {{Springer}},
  SERIES = {Scale Space and Variational Methods in Computer Vision Scale Space and Variational Methods in Computer Vision - Third International Conference, SSVM 2011},
  VOLUME = {6667},
  PAGES = {677-688},
  YEAR = {2011},
  MONTH = May,
  DOI = {10.1007/978-3-642-24785-9\_57},
  PDF = {https://inria.hal.science/hal-00644965v1/file/Sommer.SSVM11.pdf},
  HAL_ID = {hal-00644965},
  HAL_VERSION = {v1},
}

@book{amann1990ode,
  author    = {Herbert Amann},
  title     = {Ordinary Differential Equations: An Introduction to Nonlinear Analysis},
  publisher = {de Gruyter},
  year      = {1990},
  series    = {Universitext},
  isbn      = {978-3-11-015278-3}
}

@article{Arguillere2020,
  author    = {Sylvain Arguillère},
  title     = {Sub-{R}iemannian Geometry and Geodesics in {B}anach Manifolds},
  journal   = {The Journal of Geometric Analysis},
  year      = {2020},
  volume    = {30},
  number    = {3},
  pages     = {2897--2938},
  doi       = {10.1007/s12220-019-00184-5},
  url       = {https://doi.org/10.1007/s12220-019-00184-5},
  issn      = {1559-002X}
}

@article{Aro50,
  author  = {Aronszajn, Nachman},
  title   = {Theory of Reproducing Kernels},
  journal = {Transactions of the American Mathematical Society},
  volume  = {68},
  number  = {3},
  pages   = {337--404},
  year    = {1950},
  doi     = {10.2307/1990404}
}

@inproceedings{gris2015sub,
  author    = {Barbara Gris and Stanley Durrleman and Alain Trouv{\'e}},
  title     = {A Sub-Riemannian Modular Approach for Diffeomorphic Deformations},
  booktitle = {Proceedings of the 2nd International Conference on Geometric Science of Information (GSI)},
  year      = {2015},
  pages     = {47--54},
  publisher = {Springer},
  doi       = {10.1007/978-3-319-25040-3_5},
  url       = {https://doi.org/10.1007/978-3-319-25040-3_5}
}

@article{Enseiht,
title = {The 2D shape structure dataset: A user annotated open access database},
journal = {Computers \& Graphics},
volume = {58},
pages = {23-30},
year = {2016},
note = {Shape Modeling International 2016},
issn = {0097-8493},
doi = {https://doi.org/10.1016/j.cag.2016.05.009},
url = {https://www.sciencedirect.com/science/article/pii/S0097849316300528},
author = {Axel Carlier and Kathryn Leonard and Stefanie Hahmann and Geraldine Morin and Misha Collins},
keywords = {2d shape decomposition, 2d shape hierarchy, 2d shape structure, Datasets, Crowdsourcing, Medial axis},
abstract = {In this paper we present the 2D Shape Structure database, a public, user-generated dataset of 2D shape decompositions into a hierarchy of shape parts with geometric relationships retained. It is the outcome of a large-scale user study obtained by crowdsourcing, involving over 1200 shapes in 70 shape classes, and 2861 participants. A total of 41,953 annotations has been collected with at least 24 annotations per shape. For each shape, user decompositions into main shape, one or more levels of parts, and a level of details are available. This database reinforces a philosophy that understanding shape structure as a whole, rather than in the separated categories of parts decomposition, parts hierarchy, and analysis of relationships between parts, is crucial for full shape understanding. We provide initial statistical explorations of the data to determine representative (“mean”) shape annotations and to determine the number of modes in the annotations. The primary goal of the paper is to make this rich and complex database openly available (through the website http://2dshapesstructure.github.io/index.html), providing the shape community with a ground truth of human perception of holistic shape structure.}
}

\appendix
\section{Proof of proposition \ref{relation_moment}} \label{proof_moments}
This section is devoted to the proof of proposition \ref{relation_moment}. Let $q_S\in\mathcal{Q}$ be a template shape, and we consider $(e_G,q_S)\in \tilde{\mathcal{Q}}=G\times \mathcal{Q}$ the corresponding shape. We recall the two Hamiltonians on $T^*\tilde{\mathcal{Q}}= T^*G\oplus T^*\mathcal{Q}$.
\[
    H(g,q,p^g,p) =\frac{1}{2} \lvert K_{\mathfrak{g}} ((d_{e_G}R_g)^* p^{g} + \xi_q^{\mathfrak{g}*}p) \rvert^2_{\mathfrak{g}} + \frac{1}{2} \lvert K_V \xi_{q}^{*}p \rvert^2_V
\]
and after the change of variable $\tilde{q}=g^{-1}\cdot q$
\[
\tilde{H}(g,\tilde{q},\tilde{p}^g,\tilde{p}) = \frac{1}{2}\vert K_{\mathfrak{g}} (d_{e_G}R_g)^*\tilde{p}^g\vert_\mathfrak{g}^2 + \frac{1}{2} \vert K_V(d_{\id}\rho_g)^*\xi_{\tilde{q}}^* \tilde{p}\vert_V^2
\]
We will prove that both these Hamiltonians lead to equivalent dynamic. Let $(p^g_0,p_0),(\tilde{p}^g_0,\tilde{p}_0)\in T_{(e_G,q_S)}^*\tilde{\mathcal{Q}}$ initial covectors such that $\tilde{p}^{g}_0 = p^{g}_0 + \xi_{q_S}^{\mathfrak{g}*}p_0$ and $\tilde{p}_0 = \big(\partial_q ( e_G \cdot q )\vert_{q=q_S}\big)^*p_0$ . Let $(g_t,q_t)$ (resp. $(\tilde{g}_t,\tilde{q}_t)$) be the Hamiltonian flow of $H$ (resp. $\tilde{H}$). By definition, the Hamiltonian $H$ is induced by the right invariant metric of $\mathcal{G}^k$ and the action of $\mathcal{G}^k$ on the space $\mathcal{Q}$. In particular \cite[§.5]{gga}, it leads to a momentum map $m:T^*\mathcal{\tilde{Q}}\to (T_e\mathcal{G}^k)^*$ defined by
\[
(m(g,q,p^\mathfrak{g},p)\mid X,v) = (p^\mathfrak{g}\mid d_{e_G}R_g(X) ) + (p\mid \xi^\mathfrak{g}_q(X) + \xi_q(v))
\]
where $(g,q,p^\mathfrak{g},p)\in T^*\mathcal{\tilde{Q}}$, and $(X,v)\in \mathfrak{g}\times C_0^k(\R^d,\R^d)$, that defines the momentum trajectory $t\mapsto m_t =m(g_t,q_t,p^g_t,p_t)$ associated to the Hamiltonian flow. The definition of the momentum map here is particularly interesting since it determines the Hamiltonian flow of $H$, and it lives directly in the dual of tangent space at identity of $\mathcal{G}^k$. Its dynamic follows the sub-Riemannian Euler-Poincaré equation \cite[Theorem 5.2]{gga}, that we recall in its integrated form:
\begin{equation}
\label{app:int_epdiff}
    m_t = \operatorname{Ad}_{(g_t,\varphi_t)^{-1}}^*(m_0),
\end{equation}
where $m_0=(p^\mathfrak{g}_0,\xi_{q_S}^{*}p_0)\in \mathfrak{g}^*\oplus C_0^k(\R^d,\R^d)^*$. In the next step, we prove that the Hamiltonian $\tilde{H}$, after the change of variable, can also be written as the Hamiltonian associated to sub-Riemannian metric on $\tilde{\mathcal{Q}}$ induced by an action of the group $\mathcal{G}^k$. Indeed, we define, for $(g,\varphi)\in \mathcal{G}^k$ and $(g',\tilde{q})\in \tilde{\mathcal{Q}}$, the action 
\[
\tilde{A}\left((g,\varphi), (g',\tilde{q})\right) = \left(gg',\rho_{g'}(\varphi)\cdot \tilde{q}\right).
\]
We prove next that this action satisfies conditions of proposition \ref{tilde_q_shape_space}
\begin{lemma}[Second action on $\tilde{\mathcal{Q}}$] \label{tilde_q_shape_space2}
    The action $\tilde{A}$ of the half-Lie group $\mathcal{G}^k$ on $\tilde{\mathcal{Q}}$ satisfies the following conditions 
    \begin{enumerate}
        \item The action $\tilde{A} : \mathcal{G}^k \times \tilde{\mathcal{Q}}\to\tilde{\mathcal{Q}}$ is continuous.
        \item For $(h,\tilde{q}) \in \tilde{\mathcal{Q}}$, the mapping $\tilde{A}_{h,\tilde{q}}:(g,\varphi)\mapsto (gh,g\cdot(\varphi\cdot \tilde{q}))$ is smooth, and we denote $\tilde{\xi}_{h,\tilde{q}}=d_{(e_G,\id)}\tilde{A}_{h,\tilde{q}}$ the induced infinitesimal action.
        \item For $l>0$, the mappings \[\tilde{A} : \app{\mathcal{G}^{k+l} \times \tilde{\mathcal{Q}}}{\tilde{\mathcal{Q}}}{(g,\varphi),(h,\tilde{q})}{(gh,g\cdot (\varphi \cdot \tilde{q}))} \text{ and } \tilde{\xi} : \app{T_{(e_G,\id)}\mathcal{G}^{k+l} \times \tilde{\mathcal{Q}}}{T\tilde{\mathcal{Q}}}{(X,u),(h,\tilde{q})}{(Xh, X \cdot \tilde{q} + u\cdot \tilde{q})}\] are $C^l$. 
    \end{enumerate}
    Moreover any curve $(g_t,\tilde{q}_t,p^g_t,p_t)$ that is the Hamiltonian flow of $\tilde{H}$ in $T^*\tilde{\mathcal{Q}}$, with initial condition $(g_0,\tilde{q}_S)=(e_G,q_S)$, can be lifted to a curve $(g'_t,\varphi_t,p'^g_t,p^\varphi_t)$ in the space $T^*\tilde{\mathcal{G}}^k$ such that 
    \begin{enumerate}
        \item $(p'^g_0,p^\varphi_0)=(p^g_0,\xi^{*}_{\tilde{q}}p^{\varphi}_0)$
        \item $(g_t,\tilde{q}_t)=(g'_t,\varphi_t)\cdot(e_G,q_S)$, and in particular $g_t=g'_t$.
    \end{enumerate}    
\end{lemma}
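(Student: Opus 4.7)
The plan is to establish the three regularity conditions on $\tilde{A}$ along the lines of Prop.~\ref{tilde_q_shape_space}, and then to construct the cotangent lift by integrating the optimal controls of $\tilde{H}$ on $\mathcal{G}^k$.

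For the three shape-space conditions, the first step is to use the compatibility condition~\eqref{comp_cond} to rewrite
\[
\tilde{A}((g,\varphi),(h,\tilde{q})) = \bigl(gh,\,h^{-1}\cdot(\varphi\cdot(h\cdot\tilde{q}))\bigr),
\]
which re-expresses the second component as a composition of the smooth action of $G$ on $\mathcal{Q}$ with the action of $\Diff_{C_0^k}(\R^d)$ on $\mathcal{Q}$ from section~\ref{Action_Q}. Continuity then follows from continuity of each factor; smoothness of $\tilde{A}_{h,\tilde{q}}$ in $(g,\varphi)$ is immediate because $G$-multiplication is smooth and $\varphi\mapsto\varphi\cdot q$ is smooth for fixed $q$; and the $C^l$-regularity on $\mathcal{G}^{k+l}\times\tilde{\mathcal{Q}}$ follows by combining the $C^l$-regularity of the $\Diff_{C_0^{k+l}}(\R^d)$-action with the smoothness of the $G$-action. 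The corresponding statement for $\tilde{\xi}$ then follows by differentiation at identity. This step essentially mirrors the proof of Prop.~\ref{tilde_q_shape_space}.

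For the lift, the plan is to extract from the Hamiltonian flow of $\tilde{H}$ the optimal controls $(X_t,v_t)\in\mathfrak{g}\times V$ given by the optimality conditions $\partial_{X,v}\tilde{H}=0$, and to apply Prop.~\ref{exist_flow} to produce a unique flow $(g'_t,\varphi_t)\in\mathcal{G}^k$ starting from $(e_G,\id)$. The $G$-component satisfies $\dot{g}'_t=X_tg'_t$, the same evolution as $g_t$, so $g'_t=g_t$ by uniqueness. Setting $\tilde{q}'_t:=\varphi_t\cdot q_S$, a short computation using the chain rule shows that $\tilde{q}'_t$ satisfies $\dot{\tilde{q}}'_t=(d_{\id}\rho_{g_t}(v_t))\cdot \tilde{q}'_t$ with initial condition $q_S$, which is precisely the ODE governing $\tilde{q}_t$; uniqueness then gives $(g_t,\tilde{q}_t)=(g'_t,\varphi_t)\cdot(e_G,q_S)$.

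Finally, the identification of the initial covectors relies on the momentum-map structure of~\eqref{app:int_epdiff}. The action $\tilde{A}$ induces a momentum map $\tilde{m}:T^*\tilde{\mathcal{Q}}\to (T_e\mathcal{G}^k)^*$ whose value at $t=0$ is $m_0=(p^g_0,\xi^*_{q_S}p_0)$, so I would set $(p'^g_0,p^\varphi_0)$ at $(e_G,\id)$ to match $m_0$ under the canonical identification $T^*_{(e_G,\id)}\mathcal{G}^k\simeq (T_e\mathcal{G}^k)^*$, giving $p'^g_0=p^g_0$ and $p^\varphi_0=\xi^*_{q_S}p_0$ as claimed. The main obstacle will be to verify that the right-invariant sub-Riemannian Hamiltonian flow on $T^*\mathcal{G}^k$ starting from this covector projects exactly to the flow of $\tilde{H}$ on $T^*\tilde{\mathcal{Q}}$; this reduces to checking that the cotangent lift of $\tilde{A}$ intertwines the two Hamiltonian vector fields, which follows from the Euler--Poincaré equation~\eqref{app:int_epdiff} of \cite[\S5]{gga}: both dynamics are controlled by the same momentum curve $m_t=\operatorname{Ad}^*_{(g'_t,\varphi_t)^{-1}}m_0$, so producing the same controls $(X_t,v_t)$ and hence the same trajectory after projection.
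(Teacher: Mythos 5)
Your proposal is correct and follows essentially the same route as the paper: the regularity conditions are obtained by reduction to the separate actions of $G$ and $\Diff_{C_0^k}(\R^d)$ as in Proposition~\ref{tilde_q_shape_space}, and the lift is obtained by recognizing $\tilde{H}$ as the Hamiltonian induced by the right-invariant metric on $\mathcal{G}^k$ together with the action $\tilde{A}$, so that the Euler--Poincaré/momentum-map machinery of \cite[Theorem~5.2]{gga} applies. The paper's proof simply cites that theorem where you unpack it (integrating the optimal controls via Proposition~\ref{exist_flow} and matching the initial momenta), which is a faithful expansion rather than a different argument.
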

\begin{proof}
The first part of the proof is similar to the proof of proposition \ref{tilde_q_shape_space}, since the regularity conditions of the action $\tilde{A}$ follows from the structure of the semi-direct product $\mathcal{G}^k$ and on the regularity asumptions on all the different actions. Moreover, as we saw, the Hamiltonian $\tilde{H}$ comes from the pre-Hamiltonian
\[
\tilde{H}(g',\tilde{q},p^g,p,X,v) = \left(p^g,p\mid \tilde{\xi}_{g',\tilde{q}}(X,v)\right) - \frac{1}{2}(\lvert X\rvert_\mathfrak{g}^2 + \lvert v\rvert_V^2)
\] so that the Hamiltonian $\tilde{H}$ is induced by the right invariant metric on $\mathcal{G}^k$ and the action of the group $\mathcal{G}^k$ on $\tilde{Q}$. The last part of the lemma follows then from \cite[Theorem 5.2]{gga}.
\end{proof}
    In particular, this action also induces a momentum map $\tilde{m}:T^*\tilde{\mathcal{Q}}\to T_e^*\mathcal{G}^k$ defined by
    \[
    \tilde{m}(g,\tilde{q},p^g,\tilde{p})(X,v) = (p^g \vert d_eR_gX) + (\tilde{p} \vert \xi_{\tilde{q}}d_{\id} \rho_g v),
    \]
    and if $(g_t,\tilde{q}_t,p^g_t,\tilde{p}_t)$ is the Hamiltonian flow of $\tilde{H}$, then the momentum $m_t = \tilde{m}(g_t,\tilde{q}_t,p^g_t,\tilde{p}_t)$ also satisfies the sub-Riemannian Euler-Poincaré-Arnold equation \eqref{app:int_epdiff}, and this totally determines the flow $(g_t,\tilde{q}_t,p^g_t,\tilde{p}_t)$. In particular if the initial momenta $m_0$ and $\tilde{m}_0$ associated to a flow of $H$ and $\tilde{H}$ are equal, then the momenta $m_t$ and $\tilde{m}_t$ are equal for all time and they lead to the same curve in $T^*\mathcal{G}^k$. We get therefore the following result.

\begin{lemma} \label{app:relation_momentum}
Let $(p^g_0,p_0),(\tilde{p}^g_0,\tilde{p}_0)\in T_{(e_G,q_S)}^*\tilde{Q}$ initial covectors such that $\tilde{p}^{g}_0 = p^{g}_0 + \xi_{q_S}^{\mathfrak{g}*}p_0$ and $\tilde{p}_0 = \partial_{2}A(g,q_S) ^*p_0$. Let $(g_t,q_t)$ (resp. $(\tilde{g}_t,\tilde{q}_t)$) be the Hamiltonian flow of $H$ (resp. $\tilde{H}$). Then for all time $t$, 
    \begin{equation} 
     \left\{
        \begin{array}{l}
            g_t = \tilde{g}_t \\
            q_t = g_t\cdot \tilde{q}_t \\
            (d_{e_G}R_{g_t})^*p_t^{g} + \xi_{q_t}^{\mathfrak{g}*}p_t = (d_{e_G}R_{g_t})^*\tilde{p}_t^g  \\
            \xi_{q_t}^{*}p_t = (d_{\id} \rho_{g_t})^*\xi_{\tilde{q}_t}^{*}\tilde{p}_t 
        \end{array} 
        \right.        
    \end{equation}
\end{lemma}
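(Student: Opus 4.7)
The plan is to exploit the unnamed lemma that precedes this statement: both Hamiltonians $H$ and $\tilde{H}$ arise from Hamiltonian actions (respectively $A$ and $\tilde{A}$) of $\mathcal{G}^k$ on $\tilde{\mathcal{Q}}$, and consequently their flows lift to curves in $T^*\mathcal{G}^k$ whose reduced dynamics are governed by the sub-Riemannian Euler--Poincaré equation in its integrated form \eqref{app:int_epdiff}. The momentum along the flow is then entirely determined by its initial value in $T_e^*\mathcal{G}^k$. My main objective is therefore to check that the two initial momenta $m_0$ and $\tilde{m}_0$ coincide in $T_e^*\mathcal{G}^k$; the rest will follow from uniqueness of the lifted trajectory in $\mathcal{G}^k$ and from tracing back the actions to $\tilde{\mathcal{Q}}$.

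First I would write explicitly the two momentum maps $m,\tilde{m}:T^*\tilde{\mathcal{Q}}\to T_e^*\mathcal{G}^k$ associated to the actions $A$ and $\tilde{A}$, then evaluate them at the common basepoint $(e_G,q_S)$ using the prescribed initial covectors. Since $e_G$ acts as the identity on $\mathcal{Q}$, the hypothesis $\tilde{p}_0 = \big(\partial_q(e_G\cdot q)|_{q_S}\big)^*p_0$ collapses to $\tilde{p}_0 = p_0$, and a short computation (combined with $d_{\id}\rho_{e_G}=\id$) yields
\[
m_0(X,v) = (p^g_0 + \xi^{\mathfrak{g}*}_{q_S}p_0\mid X) + (\xi^*_{q_S}p_0\mid v) = \tilde{m}_0(X,v),
\]
where the relation $\tilde{p}^g_0 = p^g_0+\xi^{\mathfrak{g}*}_{q_S}p_0$ is precisely what is needed for the $X$-component to match.

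Then I would invoke equation \eqref{app:int_epdiff} for each momentum, namely $m_t = \operatorname{Ad}^*_{(g_t,\varphi_t)^{-1}}m_0$ and similarly for $\tilde{m}_t$ with its own lift $(\tilde{g}_t,\tilde{\varphi}_t)$. Because the curves in $\mathcal{G}^k$ are reconstructed from the momenta, the equality of initial momenta forces both flows to lift to the same curve $(g_t,\varphi_t)\in\mathcal{G}^k$ for all $t$, giving $g_t=\tilde{g}_t$ immediately. The relation $q_t = g_t\cdot\tilde{q}_t$ then falls out by applying the two actions of this common lifted curve to $(e_G,q_S)$: for $A$ the shape trajectory is $g_t\cdot(\varphi_t\cdot q_S)$, while for $\tilde{A}$ it is $\rho_{e_G}(\varphi_t)\cdot q_S = \varphi_t\cdot q_S=\tilde{q}_t$.

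Finally, to extract the two covector identities, I would pair the pointwise equality $m_t=\tilde{m}_t$ with test vectors of the form $(X,0)$ and $(0,v)$ in $T_e\mathcal{G}^k=\mathfrak{g}\oplus T_{\id}\Diff_{C_0^k}(\R^d)$ and read off the two slots. The main (rather mild) obstacle I anticipate is bookkeeping: one has to verify that the momentum reconstruction really produces a \emph{single} curve in $\mathcal{G}^k$ common to both Hamiltonian flows (which rests on identifying the two Hamiltonians as the pullback along $m$ and $\tilde{m}$ of the same right-invariant kinetic energy on $T^*\mathcal{G}^k$), and to handle carefully the formal transpose $(d_{\id}\rho_{g_t})^*$ appearing in the last identity, since it originates from pulling the factor $d_{\id}\rho_{g_t}v$ out of the pairing with $\xi^*_{\tilde{q}_t}\tilde{p}_t$ before comparing with the corresponding slot of $m_t$.
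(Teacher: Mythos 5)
Your proposal is correct and follows essentially the same route as the paper: both proofs compute the two momentum maps $m$ and $\tilde m$ associated to the actions $A$ and $\tilde A$, check that the hypotheses on the initial covectors give $m_0=\tilde m_0$, propagate this equality for all $t$ via the integrated Euler--Poincaré equation \eqref{app:int_epdiff} (so that both flows share the same lift to $\mathcal{G}^k$, yielding $g_t=\tilde g_t$ and $q_t=g_t\cdot\tilde q_t$), and then read off the two covector identities as the $\mathfrak{g}$- and $C_0^k$-slots of $m_t=\tilde m_t$. The paper's version is terser (it delegates the conservation step to a citation), but the argument is the same.
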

\begin{proof}
    We use the momenta trajectories to prove this result. We define the momentum $m_t = m(g_t,q_t,p^g_t,p_t)$ associated to the flow of $H$, and $\tilde{m}_t = \tilde{m}(\tilde{g}_t,\tilde{q}_t,\tilde{p}^g_t,\tilde{p}_t)$ associated to $\tilde{H}$. In particular, we have $m_0 = (p^g_0 +\xi_{q_S}^\mathfrak{g*}p_0,\xi_{q_S}^{*}p_0)$, and $\tilde{m}_0=(p^g_0,\xi_{q_S}^{*}\tilde{p_0})$, so that, by asumptions
    \[
    m_0 = \tilde{m}_0
    \]
    But then, by \cite[5.4]{gga}, we have for all $t$,    $m_t = \tilde{m}_t$ which proves the lemma.
\end{proof}
Note that this lemma is not sufficient to prove proposition \ref{relation_moment}. Indeed, let $A(g,q)$ denotes the action of $g \in G$ on $q \in \mathcal{Q}$, we can notice that $d_{\id}\rho_{g_t}\xi_{\tilde{q}_t}^{*}=\xi_{q_t}^{*}\partial_2 A(g^{-1}_t,q_t)^*$ thanks to the compatibility condition. Consequently the second equation can be expressed by :
    \begin{equation*}
        \xi_{q_t}^{*}p_t =\xi_{q_t}^{*}\partial_2 A(g_t^{-1},q_t)^*\tilde{p}_t
    \end{equation*}
However, we cannot deduce directly an equality between the moments since the operator $\xi_q$ is not necessarily invertible. Thus, we will prove this relation by direct calculus. 
    
\begin{prop}[Influence of the change of variable on covectors] \label{app:relation_moment}
    Let $A : (g,q) \rightarrow A(g,q)$ be the action of $G$ on $\mathcal{Q}$, $\xi^\mathfrak{g}$ its associated infinitesimal action. Let $(p^g_0,p_0),(\tilde{p}^g_0,\tilde{p}_0)\in T_{(e_G,q_S)}^*\tilde{\mathcal{Q}}$ initial covectors such that $\tilde{p}^{g}_0 = p^{g}_0 + \xi_{q_S}^{\mathfrak{g}*}p_0$ and $\tilde{p}_0 = \partial_{2}A(e_G,q_S) ^*p_0$. Let $(g_t,q_t)$ (resp. $(\tilde{g}_t,\tilde{q}_t)$) be the Hamiltonian flow of $H$ (resp. $\tilde{H}$). Then for all time $t$,
    \begin{equation}
     \left\{
        \begin{array}{l}
            \tilde{g}_t=g_t\\
            \tilde{q}_t= g_t\cdot q_t\\
            \tilde{p}^{g}_t = p^{g}_t + (d_{e_G}R_{g_t^{-1}})^* \xi_{q_t}^{\mathfrak{g}*}p_t \\
            \tilde{p}_t = \partial_2A(g_t,\tilde{q}_t)^*p_t 
        \end{array} 
        \right.        
    \end{equation}
\end{prop}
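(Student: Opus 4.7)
The plan is to recognize the change of variable $\Phi:(g,q)\mapsto(g,g^{-1}\cdot q)$ as a diffeomorphism of $\tilde{\mathcal{Q}}$ and to identify the claimed mapping $(p^g,p)\mapsto(\tilde{p}^g,\tilde{p})$ as its cotangent lift. Since a cotangent lift is automatically a symplectomorphism of $T^*\tilde{\mathcal{Q}}$, and since one can check that under this lift the Hamiltonian $H$ in the original coordinates equals $\tilde{H}$ in the new coordinates, the Hamiltonian flows of $H$ and $\tilde{H}$ are intertwined by the cotangent lift, which is exactly what the proposition asserts. Lemma \ref{app:relation_momentum} already provides the position identities together with a ``projected'' (post-composition with $\xi^*$) version of the covector identities; the proposition is the sharper explicit form.

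The position identities $\tilde{g}_t=g_t$ and $\tilde{q}_t=g_t^{-1}\cdot q_t$ are immediate from Lemma \ref{app:relation_momentum} (the reading $\tilde{q}_t=g_t\cdot q_t$ in the statement appears to be a typo inconsistent with the change of variable $\tilde{q}=g^{-1}\cdot q$). For the covector identities, compute $d\Phi^{-1}(g,\tilde{q})(\delta g,\delta\tilde{q})=(\delta g,\partial_1 A(g,\tilde{q})\delta g+\partial_2 A(g,\tilde{q})\delta\tilde{q})$, where $A(g,\tilde{q})=g\cdot\tilde{q}$. The cotangent lift of $\Phi^{-1}$ then pulls back a covector $(p^g,p)$ at $(g,q)=\Phi^{-1}(g,\tilde{q})$ to the covector at $(g,\tilde{q})$ given by
\begin{equation*}
\tilde{p}^g = p^g + \partial_1 A(g,\tilde{q})^* p, \qquad \tilde{p} = \partial_2 A(g,\tilde{q})^* p.
\end{equation*}
The second formula matches the proposition directly. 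For the first, differentiating $A(\gamma(s),\tilde{q})=(\gamma(s)g^{-1})\cdot(g\cdot\tilde{q})$ along a curve with $\dot\gamma(0)=\delta g$ yields $\partial_1 A(g,\tilde{q})(\delta g)=\xi^{\mathfrak{g}}_{g\cdot\tilde{q}}(d_g R_{g^{-1}}\delta g)$, whose adjoint gives $\partial_1 A(g_t,\tilde{q}_t)^* p_t = (d_{g_t} R_{g_t^{-1}})^* \xi^{\mathfrak{g}*}_{q_t} p_t$, matching the expression in the proposition (the notation $(d_{e_G}R_{g_t^{-1}})^*$ being understood as $((d_{e_G}R_{g_t})^*)^{-1}=(d_{g_t}R_{g_t^{-1}})^*$).

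It remains to verify that $H(g,q,p^g,p)=\tilde{H}(g,\tilde{q},\tilde{p}^g,\tilde{p})$ under this cotangent lift. The $\mathfrak{g}$-kinetic term matches because
\begin{equation*}
(d_{e_G}R_g)^* \tilde{p}^g = (d_{e_G}R_g)^* p^g + (d_{e_G}R_g)^*(d_g R_{g^{-1}})^* \xi^{\mathfrak{g}*}_q p = (d_{e_G}R_g)^* p^g + \xi^{\mathfrak{g}*}_q p,
\end{equation*}
using $R_{g^{-1}}\circ R_g=\id$. The $V$-kinetic term matches because differentiating the compatibility condition \eqref{comp_cond} at $\varphi=\id$ in the direction $v$ gives $\xi_q(v)=\partial_2 A(g,\tilde{q})(\xi_{\tilde{q}}(d_{\id}\rho_g v))$, whose adjoint is $\xi_q^* p=(d_{\id}\rho_g)^*\xi_{\tilde{q}}^*\tilde{p}$, exactly what is needed to convert the $V$-term of $H$ into that of $\tilde{H}$. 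The invariance of Hamiltonian dynamics under symplectomorphisms relating Hamiltonians by pullback then yields the proposition. The main obstacle is not conceptual but notational bookkeeping: carefully tracking the correct base points and right-translation conventions when taking adjoints, and aligning the resulting formulas with those in Lemma \ref{app:relation_momentum}.
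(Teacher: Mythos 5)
Your route is genuinely different from the paper's, and the computations you give are sound. The paper proves the proposition in two stages: first, Lemma \ref{app:relation_momentum} (equality of the momenta $m_t=\tilde m_t$ via the integrated Euler--Poincar\'e equation) yields the position identities together with only the \emph{projected} covector identity $\xi_{q_t}^{*}p_t=\xi_{q_t}^{*}\partial_2A(g_t^{-1},q_t)^{*}\tilde p_t$, which cannot be inverted since $\xi_{q_t}$ need not be injective; second, the authors upgrade this by a direct computation showing that $t\mapsto\partial_2A(g_t,\tilde q_t)^{*}p_t$ and $t\mapsto\tilde p_t$ solve the same linear Cauchy problem, concluding with Picard--Lindel\"of. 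You instead identify the full transformation as the cotangent lift of $\Phi:(g,q)\mapsto(g,g^{-1}\cdot q)$ and check $H=\tilde H\circ\hat\Phi$; your verification of the two kinetic terms (via $d_{g}R_{g^{-1}}\circ d_{e_G}R_{g}=\id_{\mathfrak g}$ and via differentiating the compatibility condition \eqref{comp_cond} to get $\xi_q(v)=\partial_2A(g,\tilde q)\,\xi_{\tilde q}(d_{\id}\rho_g v)$) is correct, and your reading of $(d_{e_G}R_{g^{-1}})^{*}$ as $(d_{g}R_{g^{-1}})^{*}$ and of $\tilde q_t=g_t\cdot q_t$ as a typo for $g_t^{-1}\cdot q_t$ is the only way to make the statement type-check and be consistent with Lemma \ref{app:relation_momentum}. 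Your argument is shorter and, if valid, also subsumes the position identities, whereas the paper's argument is more robust in that it never needs any symplectic-geometric fact about the change of variables.

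The one point you must confront explicitly is that the paper, in the remark immediately following its proof, asserts that the map
$\Phi : (g,q,p^{\mathfrak{g}},p)\mapsto(g,g^{-1}\cdot q,\, p^{\mathfrak{g}} + (d_{e_G}R_{g^{-1}})^{*}\xi_q^{\mathfrak{g}*}p,\,\partial_2A(g,g^{-1}\cdot q)^{*}p)$
is \emph{not} a symplectomorphism. If that assertion is taken at face value, your proof fails at its central step, since you rely precisely on this map being a symplectomorphism to intertwine the two flows. Your computation does show that this map coincides with the cotangent lift of the base diffeomorphism, and cotangent lifts preserve the Liouville one-form and hence the canonical (weak) symplectic form even on Banach manifolds, so either the remark is in error or it refers to the literal, type-mismatched reading of $(d_{e_G}R_{g^{-1}})^{*}$; but as written your proposal silently contradicts the paper without acknowledging or resolving the discrepancy. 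You should either (a) include the explicit verification that $\hat\Phi^{*}\theta=\theta$ in this setting, thereby settling the point, or (b) fall back on the paper's strategy for the covector identity on $\tilde p_t$, which sidesteps the question entirely. A secondary, minor gap: in the weak-symplectic setting you should say a word about why the pushforward by $\hat\Phi$ of the symplectic gradient of $H$ is \emph{the} symplectic gradient of $\tilde H$ (existence is needed; uniqueness follows from weak nondegeneracy), since this is where the intertwining of flows actually gets used.
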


\begin{proof}
By lemma \ref{app:relation_momentum}, we directly get 
\begin{equation*}
     \left\{
        \begin{array}{l}
            \tilde{g}_t=g_t\\
            \tilde{q}_t= g_t\cdot q_t\\
            \tilde{p}^{g}_t = p^{g}_t + (d_{e_G}R_{g_t^{-1}})^* \xi_{q_t}^{\mathfrak{g}*}p_t\\
            \xi_{q_t}^{*}p_t =\xi_{q_t}^{*}\partial_2 A(g_t^{-1},q_t)^*\tilde{p}_t
        \end{array} 
        \right.        
\end{equation*}
Since $d_eR_g$ is invertible, we get the equality 
\[\tilde{p}^{g}_t = p^{g}_t + (d_{e_G}R_{g_t^{-1}})^* \xi_{q_t}^{\mathfrak{g}*}p_t.\]
It remains to prove the equality on the covectors $p_t$ and $\tilde{p}_t$. Let $A_t =\partial_2 A(g^{-1}_t,q_t)$, be an invertible operator with inverse $A_t^{-1} = \partial_2 A(g_t,\tilde{q}_t)$. In particular, we can relate this operator to the infinitesimal action.
\begin{equation} \label{relation_inf_action}
\xi_{\tilde{q}_t}(\tilde{v}_t) = A_t \xi_{q_t}(v_t) \quad \mbox{ or equivalently } \quad \xi_{q_t}(v_t) = A_t^{-1}  \xi_{\tilde{q}_t}(\tilde{v}_t)    
\end{equation}
We will prove the first equality of \ref{app:relation_moment} by showing that $\tilde{p}_t$ and $A_t^{-1*}p_t$ follow the same dynamic. For $\delta q \in T_{\tilde{q}}Q$
\begin{equation}\label{proof_eq_1}
    \partial_t (A_t^{-1*}p_t \vert \delta q)    = -(p_t \vert (\partial_q \xi_{q_t}(v_t) +  \partial_q \xi_{q_t}^{\mathfrak{g}}(X_t))  A_t^{-1}\delta q) +  (p_t \vert \partial_t A_t^{-1}\delta q)
\end{equation}
where the time derivative of $ A_t^{-1}$ is 
\begin{equation*}
   \partial_t A_t^{-1}= \partial^2_{1,2} A(g_t,\tilde{q}_t)d_{e_G}R_{g_t}(X_t) + \partial^2_{2,2} A(g_t,\tilde{q}_t) \dot{\tilde{q}}_t
\end{equation*}
By noticing that  $\partial_1 A(g_t,\tilde{q}_t) d_{e_G} R_{g_t}(X_t)= \xi_{g_t q_t}^{\mathfrak{g}}(X_t)$, it follows that
\begin{equation*}
    \partial_{1,2}^2 A(g_t,\tilde{q}_t) T_e R_{g_t}(X_t)= \partial_{q} \xi_{\tilde{q}_t}^{\mathfrak{g}}(X_t) A_t^{-1}
\end{equation*}
such that we can simplify the previous expression :
\begin{equation*}
   \partial_t A_t^{-1} = \partial_{q} \xi_{\tilde{q}_t}^{\mathfrak{g}}(X_t) A_t^{-1} + \partial^2_{2,2} A(g_t,\tilde{q}_t)\dot{\tilde{q}}_t
\end{equation*}
Then, by substition in Eq.\ref{proof_eq_1}, 
\begin{equation*}
    \partial_t (p_t \vert A_t^{-1}\delta q)   = (p_t \vert \partial^2_{2,2} A(g_t,\tilde{q}_t) (\dot{\tilde{q}}_t,\delta q)- \partial_q \xi_{q_t}(v_t)   A_t^{-1}\delta q  )
\end{equation*}
and using the relations from Eq. \ref{relation_inf_action}, we can deduce that
\begin{equation*}
    \partial_q \xi_{q_t} (v_t) A_t^{-1}=\partial_{2,2}^2 A(g_t,\tilde{q}_t)\dot{\tilde{q}}_t+ A_t^{-1}  \partial_q (\xi_{\tilde{q}_t}(\tilde{v}_t ))
\end{equation*}
We can conclude that 
\begin{equation*}
 \partial_t (A_t^{-1*}p_t \vert \delta q)   = -(  \partial_q (\xi_{\tilde{q}_t}(\tilde{v}_t ))^*A_t^{-1*}p_t \vert \delta q ) 
\end{equation*}
which leads to 
\begin{equation*}
    \frac{d}{dt}(A_t^{-1*} p_t) = - \partial_q (\xi_{\tilde{q}_t}(\tilde{v}_t ))^*(A_t^{-1*}p_t)
\end{equation*}
Finally, the result follows by the uniquess of the solution under equality of initial conditions thanks to Picard-Lindelöf theorem.

\end{proof}

\begin{rem}

Even though, the relation between $p$ and $\tilde{p}$ corresponds to the lift of the action $A_{g^{-1}}(q):=A(g^{-1},q)$ on the cotangent bundle $T^*Q$, the mapping 
\begin{equation*}
    \Phi : \app{T^*(G \times Q)}{T^*(G \times Q)}{(g,q,p^{\mathfrak{g}},p)}{(g,g^{-1}\cdot q, p^{\mathfrak{g}} + (d_{e_G}R_{g^{-1}})^* \xi_q^{\mathfrak{g}*}p,\partial_2A(g,g^{-1}\cdot q)^*p)}
\end{equation*} is not a symplectomorphism.
\end{rem}

\section{Symplectic geometry}
\label{App:symplecti_geom}
We remind some basic definitions and properties of symplectic geometry to have a better understanding of the notions involved in section \ref{reduction}. Further definitions and properties can be found in \cite{sympl_geom_dasilva}.

\begin{defi}[Symplectic manifold]
A symplectic manifold $(M,\omega)$ is a smooth manifold $M$ associated with a closed, non-degenerate 2-form $\omega$.
    
\end{defi}

A classic example of symplectic manifold is the cotangent bundle of a smooth manifold $T^*\mathcal{Q}$, endowed with the canonical symplectical form. This example is the one developed in section \ref{reduction} where the symplectic reduction is performed on the shape space $\mathcal{Q}$. In applications, the shape space can be finite-dimensional (landmarks, discrete curve etc.) or infinite-dimensional (images, smooth curves etc.) which implies different requirement on the symplectic form. Indeed, for $(q,p) \in T^*\mathcal{Q}$ the musical map $ \omega((q,p),\cdot) : (q',p') \in T^*\mathcal{Q} \mapsto \omega((q,p),(q',p')) \in \R$ needs to be an isomorphism in the finite-dimensional setting, which is often a too strong requirement for infinite-dimensional manifolds. In this case, we will rather define a notion of weakly symplectic form which only requires the injectivity of the musical map.  For the sake of simplicity, we only present the finite-dimensional setting although the stated results can be adapted to infinite-dimensional framework (see \cite{diez2024symplectic}). Note that, if $M$ is finite-dimensional, then $\operatorname{dim}(M)$ is even.

\begin{defi} \label{def:symplecto}
    Let $(M_1,\omega_1)$ and $(M_2,\omega_2)$ be two symplectic manifolds. A symplectomorphism $\varphi : M_1 \to M_2$ is a diffeomorphism that preserves the symplectic form $\varphi^* \omega_2 = \omega_1$ where $\varphi^*$ is the pullback of $\varphi$.
\end{defi}

In the context of variational problem, the equations of motion can be described through a Hamiltonian function $H : M \rightarrow \R$. 

 \begin{defi}[Hamiltonian vector field]
    Let $H : M \rightarrow \R$ be a Hamiltonian function. The unique vector field $X_H$ on $M$ such that $\iota_{X_H}\omega = dH$ is called the Hamiltonian vector field associated to $H$.
 \end{defi}
The Hamiltonian vector field associated to a Hamiltonian $H:M\to\R$ is also called simply the symplectic gradient of $H$, and is denoted by $\nabla^\omega H$. 
A consequence is that the Hamiltonian is constant on integral curve of $X_H$. \\
\begin{exam}
    Let $M=\R^{2n}$ with coordinates $(q_1,...,q_n,p_1,...,p_n)$ and canonical symplectic form $\omega = \sum dq_i \wedge dp_j$. Then, the differential of the Hamiltonian $H$ is given by 
\begin{equation*}
    dH = \sum (\frac{\partial H}{\partial q_i} dq_i - \frac{\partial H}{\partial p_i} dp_i)
\end{equation*}

which corresponds to the Hamiltonian vector field 

\begin{equation*}
    \nabla^\omega H= \sum (\frac{\partial H}{\partial p_i} \frac{\partial}{\partial q_i} - \frac{\partial H}{\partial q_i} \frac{\partial}{\partial p_i})
\end{equation*}

Therefore, if we consider $(q(t),p(t))$  integral curves of $\nabla^{\omega}H$, they satisfy the Hamiltonian equations :

\begin{equation*}
    \dot{q}_i(t) = \frac{\partial H}{\partial p_i}, \quad \dot{p}_i(t) = -\frac{\partial H}{\partial q_i} 
\end{equation*}
\end{exam}
Reduction theorem and Noether's theorem, that we will state later, arises from the symmetry of the studied system. Those symmetries are represented by a Lie group $G$ acting smoothly on the symplectic manifold M via $\psi : G \rightarrow \operatorname{Diff}(M)$. In addition, we assume that $G$ acts by symplectomorphism on $M$, i.e preserves the symplectic form.

\begin{defi}[Hamiltonian action]
\label{App:Ham_action}
    The action $\psi$ is said to be a Hamiltonian action if there exists a map $\mu : M \rightarrow \mathfrak{g}^*$ such that 
    \begin{itemize}
        \item For every $X \in \mathfrak{g}$, by denoting $\hat{\mu}(X) : p \mapsto  (\mu(p) \vert X)$, the vector field  $\tilde{X}$ on $M$ generated by the infinitesimal action satisfies   
        \begin{equation*}
        \nabla^\omega\hat{\mu}(X)= \tilde{X}
        \end{equation*}
        i.e $\tilde{X}$ is the symplectic gradient of $\hat{\mu}(X)$.
        
        \item $\mu$ is G-equivariant :
        
        $$ \mu \circ \psi_g = Ad_g^* \circ \mu \, \text{ for all } g \in G $$
        \end{itemize}
\end{defi}
Noether's theorem states that symmetries give rise to conserved quantites.

\begin{theo}[Noether's theorem]
    If the Hamiltonian $H$ is $G-$invariant, then the momentum maps $ \hat{\mu}(X) : p \in M \mapsto (\mu(p) \vert X) \in \R$ are conserved quantities along the Hamiltonian flow.
    
\end{theo}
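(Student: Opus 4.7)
The plan is to show that $\hat\mu(X)$ is constant along any integral curve of the Hamiltonian vector field $\nabla^\omega H$, for each fixed $X\in\mathfrak g$. I will express the time derivative of $\hat\mu(X)$ along the Hamiltonian flow as a Poisson bracket $\{H,\hat\mu(X)\}$, then rewrite this bracket using the defining property of a Hamiltonian action, and finally kill it using the $G$-invariance of $H$.

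First, recall that for any smooth function $f\colon M\to\mathbb R$ and any integral curve $t\mapsto p(t)$ of $\nabla^\omega H$, one has
\begin{equation*}
\frac{d}{dt} f(p(t)) \;=\; df_{p(t)}\bigl(\nabla^\omega H(p(t))\bigr) \;=\; \omega\bigl(\nabla^\omega f, \nabla^\omega H\bigr)(p(t)),
\end{equation*}
which is by definition $\{H,f\}(p(t))$ (up to the sign convention used in the paper). Applied to $f=\hat\mu(X)$, and using Definition \ref{App:Ham_action} which gives $\nabla^\omega \hat\mu(X)=\tilde X$, this becomes
\begin{equation*}
\frac{d}{dt}\hat\mu(X)(p(t)) \;=\; \omega\bigl(\tilde X,\nabla^\omega H\bigr)(p(t)) \;=\; -\,dH_{p(t)}\bigl(\tilde X(p(t))\bigr).
\end{equation*}
So the whole statement reduces to proving that $\tilde X(H)=0$ on $M$.

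Next, I invoke the $G$-invariance of $H$, which by assumption means $H\circ\psi_g = H$ for every $g\in G$. Differentiating this identity in $g$ at $g=e_G$ in the direction $X\in\mathfrak g$, and using that the infinitesimal generator $\tilde X$ is by definition
\begin{equation*}
\tilde X(p) \;=\; \left.\frac{d}{ds}\right|_{s=0}\psi_{\exp(sX)}(p),
\end{equation*}
yields exactly $dH(\tilde X)=0$ pointwise on $M$. Combined with the previous display, this gives $\frac{d}{dt}\hat\mu(X)(p(t))=0$, so $\hat\mu(X)$ is constant along the flow of $\nabla^\omega H$, which is precisely the claim.

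The argument is short and essentially formal; the only subtle point (and the one I would be most careful with) is the sign/convention in the identity relating the Hamiltonian vector field, the Poisson bracket, and the symplectic form, together with the fact that in the weakly symplectic setting of the applications in section \ref{reduction} one still has the well-definedness of $\nabla^\omega \hat\mu(X)$ as an element of the tangent bundle, which is precisely what the Hamiltonian action hypothesis guarantees. No further technical obstacle appears.
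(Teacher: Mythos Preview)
Your proof is correct and is the standard argument for Noether's theorem: compute the time derivative of $\hat\mu(X)$ along the flow, recognize it as $-dH(\tilde X)$ via the symplectic form, and kill this term using the $G$-invariance of $H$. The paper itself does not supply a proof of this statement; it is recalled in Appendix~\ref{App:symplecti_geom} as classical background material and simply stated without argument. So there is nothing to compare against, but your argument is complete as written (the sign bookkeeping is consistent with the paper's convention $\iota_{X_H}\omega=dH$, and the aside about Poisson brackets is harmless even though the paper never introduces them).
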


We denote the orbit of G through $p\in M$ by $\operatorname{Orbit}_G(p)=\{ \psi_g(p) \mid g \in G\}$ and the stabilizer of $p \in M$ by the subgroup $G_p := \{ g \in G \mid \psi_g(p) = p \}$.
Being in the same orbit defines an equivalence relation $p \sim q $ if and only if $p$ and $q$ are on the same orbit, which defined an orbit space $M/G$. Then, we define the mapping $$\pi : \app{M}{M/G}{p}{\operatorname{Orbit}_G(p)}$$ called the point-orbit projection.
We can equip $M/G$ with the quotient topology, which is the weakest topology for which $\pi$ is continuous, i.e $\mathcal{U} \subset M/G$ is open if and only if $\pi^{-1}(\mathcal{U})$ is open in M. 

\begin{theo}[Marsden-Weinstein-Meyer \cite{MARSDEN1974121,marsden1994introduction}]
Let $(M,\omega)$ be a symplectic manifold and let $G$ be a Lie group with Hamiltonian action on $(M,\omega)$ with associated moment map $\mu : M \rightarrow \mathfrak{g}^*$. Let  $i : \mu^{-1}(0) \hookrightarrow M$ be the inclusion map. Assume that G acts freely and properly on $\mu^{-1}(0)$. Then,

\begin{itemize}
    \item the orbit space $\mu^{-1}(0)/G$ (sometimes denoted $M//G)$ is a symplectic manifold.
    \item $\pi : \mu^{-1}(0) \rightarrow \mu^{-1}(0)/G$ is a principal G-bundle
    \item there is a symplectic form $\omega_{red}$ on $\mu^{-1}(0)/G$ satisfying $i^* \omega = \pi^*\omega_{red}$.
\end{itemize}    
\end{theo}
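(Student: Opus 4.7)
The plan is to prove the three assertions in the order stated, closely following the classical Marsden--Weinstein argument, as the result is essentially the standard symplectic reduction theorem. The key ingredients will be (i) the regular-value property of $\mu$ at $0$, which comes from the freeness of the action, (ii) the quotient manifold theorem, which comes from the properness and freeness of the action, and (iii) an explicit computation of the kernel of $i^*\omega$ at a point, which gives both the descent of the form and its non-degeneracy.

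\textbf{Step 1 (Smoothness of $\mu^{-1}(0)$).} I would first show that $0$ is a regular value of $\mu$, so that $\mu^{-1}(0)$ is a smooth embedded submanifold of $M$ of codimension $\dim G$. For $p\in\mu^{-1}(0)$ and $X\in\mathfrak{g}$, the defining property of the momentum map gives, for every $v\in T_pM$,
\begin{equation*}
(d\mu_p(v)\mid X) \;=\; d(\hat\mu(X))_p(v) \;=\; \omega_p(\tilde X(p),v),
\end{equation*}
where $\tilde X$ is the infinitesimal generator of $X$. Hence $X$ annihilates the image of $d\mu_p$ if and only if $\omega_p(\tilde X(p),\cdot)=0$; by non-degeneracy of $\omega$ this means $\tilde X(p)=0$, and by freeness of the $G$-action on $\mu^{-1}(0)$ this forces $X=0$. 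Therefore $d\mu_p$ is surjective.

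\textbf{Step 2 (Quotient manifold structure).} Since $G$ acts freely and properly on the smooth manifold $\mu^{-1}(0)$, the quotient-manifold theorem produces a unique smooth structure on $\mu^{-1}(0)/G$ for which the projection $\pi$ is a smooth submersion. Standard Lie-group theory then yields that $\pi:\mu^{-1}(0)\to\mu^{-1}(0)/G$ is a principal $G$-bundle; this gives the second bullet.

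\textbf{Step 3 (Descent of the form and symplectic structure).} Set $\omega_0:=i^*\omega$. Because $G$ acts by symplectomorphisms and $\mu^{-1}(0)$ is $G$-stable (by the equivariance $\mu\circ\psi_g=\operatorname{Ad}_{g}^*\circ\mu$ evaluated at $0\in\mathfrak{g}^*$), the form $\omega_0$ is $G$-invariant. To check horizontality, take $X\in\mathfrak{g}$ and $v\in T_p\mu^{-1}(0)=\ker d\mu_p$; then
\begin{equation*}
\omega_0\big(\tilde X(p),v\big) \;=\; \omega_p(\tilde X(p),v) \;=\; (d\mu_p(v)\mid X) \;=\; 0.
\end{equation*}
So $\omega_0$ is basic and descends to a unique $2$-form $\omega_{red}$ on $\mu^{-1}(0)/G$ with $\pi^*\omega_{red}=i^*\omega$. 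Closedness follows from $\pi^*d\omega_{red}=d\pi^*\omega_{red}=di^*\omega=i^*d\omega=0$ and the injectivity of $\pi^*$ on forms induced by the surjectivity of $d\pi$.

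\textbf{Step 4 (Non-degeneracy).} The main technical point, and the step I expect to be the crux, is the identification
\begin{equation*}
\ker\big(\omega_0|_p\big) \;=\; T_p(G\cdot p), \qquad p\in\mu^{-1}(0).
\end{equation*}
This reduces to the symplectic-orthogonal identity $(\ker d\mu_p)^{\omega}=T_p(G\cdot p)$, which follows by combining Step 1 (freeness $\Rightarrow$ the infinitesimal action $\mathfrak{g}\to T_pM$, $X\mapsto\tilde X(p)$, is injective) with the momentum map identity above and a dimension count: both subspaces of $T_pM$ have dimension $\dim G$, and the inclusion $T_p(G\cdot p)\subset(\ker d\mu_p)^{\omega}$ is immediate from $\omega_p(\tilde X(p),v)=(d\mu_p(v)\mid X)$. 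Intersecting with $T_p\mu^{-1}(0)=\ker d\mu_p$ and using $T_p(G\cdot p)\subset T_p\mu^{-1}(0)$ gives the claim. Passing to the quotient, $T_p(G\cdot p)=\ker d\pi_p$, so $\omega_{red}$ is non-degenerate on $T_{[p]}(\mu^{-1}(0)/G)$, completing the proof.
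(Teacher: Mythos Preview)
The paper does not actually prove this theorem; it is stated in the appendix on symplectic geometry as a classical result with citations to the original Marsden--Weinstein and Marsden--Ratiu references, and no argument is given. Your proof is the standard Marsden--Weinstein argument and is correct in all essential points: the regular-value step via freeness, the quotient-manifold theorem for the principal bundle structure, the basic-form argument for the descent of $i^*\omega$, and the identification $(\ker d\mu_p)^{\omega}=T_p(G\cdot p)$ for non-degeneracy. There is nothing to compare against in the paper itself.
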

The pair $(\mu^{-1}(0)/G,\omega_{red})$ is called the reduced space (or symplectic quotient).

\section{Variational problems in Banach space and Pontryagin principle}
\label{App:opt_cont}
Let $\mathcal{Q}$ be a Banach manifold, modeled on a Banach space $\mathbb{B}$, and $V$ a Banach space. Let $q_0\in\mathcal{Q}$. In this section, we are interested in the control system
\begin{equation}
    \label{dyn_Q}
    \dot{q}(t)=f(q(t),u(t)), \ q(0)=q_0
\end{equation}
where $u\in L^1(I,V)$, and $f: \mathcal{Q}\times V\to T\mathcal{Q}$ is $C^2$, and is a fiber bundle morphism over the identity of $\mathcal{Q}$ (this means that for all $(q,v)\in\mathcal{Q}\times V,$ $f(q,v)$ is in $T_q\mathcal{Q}$). We denote by $\mathcal{U}_{q_0}$ the set of admissible controls $u\in L^1(I,V)$ such that it leads to a maximal trajectory $q:[0,T]\to\R$ starting in $q(0)=q_0$ and such that $T>1$, and we suppose that the set $\mathcal{U}_{q_0}$ is open in $\mathcal{Q}$ (see for instance \cite{Arguillere2020,Younes2019, amann1990ode}). Let $\mathcal{D}:\mathcal{Q}\to \R^+$ be $C^1$. We are interested in this section on functionals $J:\mathcal{U}_{q_0}\to \R$ that take the form :
$$
J(u) = \int_I L(q(t),u(t))dt + \mathcal{D}(q(1))
$$
 such that $q\in AC_{L^1}([0,1],\mathcal{Q})$ satisfies the dynamic \eqref{dyn_Q}. We suppose that, for any charts $U\subset \mathcal{Q}$, there exists continuous functions $\gamma_0:\R\to\R$ and $\gamma_1:\mathcal{Q}\to\R$ such that $\gamma_0(0)=0$ and
\begin{equation}
    \label{Hdiff}
    \lVert dL_{q,u}-dL_{q',u'} \rVert \leq \gamma_0(\lvert q-q' \rvert_{\mathbb{B}})  +\gamma_1(q-q')\lvert u -u' \rvert_V
\end{equation}
for any $q,q'\in U$ and $u,u'\in V$. This condition \eqref{Hdiff} ensures that the mapping $J_0 : AC_{L^1}(I,\mathcal{Q})\times L^1(I,V)\to \R$ defined by $J_0(q,u) = \int_I L(q(t),u(t))dt$ is well defined and continuously differentiable \cite{arguillère_trélat_2017} and that 
\begin{equation}
    \label{derivative_J}
    dJ_0(q,u)(\delta q,\delta u) = \int_I\frac{\partial}{\partial q}L(q(t),u(t))\delta q(t) + \frac{\partial}{\partial u}L(q(t),u(t))\delta u(t) dt
\end{equation}
In this section, we want to minimize the functional $J$ and characterize its critical points, which is a problem of optimal control theory in Banach spaces. We start by proving a regularity property on the evolution map.

\begin{lemma}[Evolution map associated to the dynamic]
    The evolution map 
    $$
    \operatorname{Evol}_{\mathcal{Q}} : u\in \mathcal{U}_{q_0} \mapsto q^u \in AC_{L^1}(I,\mathcal{Q})
    $$
    where $q^u$ is solution of equation \eqref{dyn_Q}, is $C^1$. Furthermore, for $u\in\mathcal{U}_{q_0}, \delta u \in  L^1(I,V)$, its derivative $\delta q = T_u \operatorname{Evol}_{\mathcal{Q}}\cdot \delta u$ is solution of the linear Cauchy problem
\begin{equation}
\label{linq2}
    \delta \dot{q}(t) = \partial_q \left(f(q,u(t)\right)_{|q=q^u(t)}\delta q(t) + \partial_u(f(q^u(t),u)_{|u=u(t)}\delta u(t), \ \delta q(0)=0
\end{equation}
\end{lemma}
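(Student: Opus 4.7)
The plan is to apply the implicit function theorem in Banach spaces to the defining equation of the evolution map. First I would reduce to a local Banach-space picture: since $\mathcal{U}_{q_0}$ is open and the trajectory $q^u([0,1])$ is compact, I can cover it by finitely many coordinate charts so that $\mathcal{Q}$ is identified locally with an open subset of its model space $\mathbb{B}$, and small $L^1$-perturbations of $u$ yield trajectories that remain in this tubular neighborhood. This allows me to work with curves valued in $\mathbb{B}$ throughout.

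Next I introduce the operator
\[
F : AC_{L^1}(I,\mathbb{B}) \times L^1(I,V) \longrightarrow L^1(I,\mathbb{B}), \qquad F(q,u)(t) = \dot{q}(t) - f(q(t),u(t)),
\]
restricted to curves with $q(0)=q_0$. Solutions of \eqref{dyn_Q} are precisely the zeros of $F$. The $C^2$ regularity of $f$, combined with a Nemytskii-type estimate analogous to the one giving \eqref{derivative_J}, implies that $F$ is continuously differentiable, with partial derivatives
\[
\partial_q F(q^u,u)\cdot \delta q = \dot{\delta q} - \partial_q f(q^u(t),u(t))\,\delta q(t), \qquad \partial_u F(q^u,u)\cdot \delta u = -\partial_u f(q^u(t),u(t))\,\delta u(t).
\]

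The central step is to prove that $\partial_q F(q^u,u)$ is a Banach-space isomorphism from the space of $AC_{L^1}$-curves with $\delta q(0)=0$ onto $L^1(I,\mathbb{B})$. Setting $A(t)=\partial_q f(q^u(t),u(t))$, which lies in $L^\infty(I,\mathcal{L}(\mathbb{B}))$ by compactness of the trajectory and continuity of $\partial_q f$, this amounts to showing that the linear Cauchy problem
\[
\dot{\delta q}(t) = A(t)\,\delta q(t) + g(t), \qquad \delta q(0)=0,
\]
has a unique $AC_{L^1}$-solution for every $g\in L^1(I,\mathbb{B})$, with continuous dependence. This follows from Gronwall's lemma together with the variation-of-constants formula applied to the resolvent of $A(t)$.

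The implicit function theorem then provides an open neighborhood of $u$ in $\mathcal{U}_{q_0}$ on which the assignment $u\mapsto q^u$ is $C^1$; globally, a standard gluing along the chart cover gives that $\operatorname{Evol}_{\mathcal{Q}}$ is $C^1$ on $\mathcal{U}_{q_0}$. Differentiating the identity $F(\operatorname{Evol}_{\mathcal{Q}}(u),u)=0$ in $u$ yields
\[
\partial_q F(q^u,u)\cdot (T_u\operatorname{Evol}_{\mathcal{Q}}\cdot \delta u) + \partial_u F(q^u,u)\cdot \delta u = 0,
\]
which is exactly the linearized Cauchy problem \eqref{linq2}. The main obstacle I foresee is the isomorphism statement for $\partial_q F$: one must justify carefully that the multiplication operator induced by $A(t)$ is bounded from $AC_{L^1}$ into $L^1$ and that the inverse is continuous in the chosen norms, which is where the structural assumption that $f$ is a $C^2$ bundle morphism (ensuring local boundedness of $\partial_q f$ along any admissible trajectory) is essential.
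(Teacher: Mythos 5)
Your proposal is correct and follows essentially the same route as the paper: both reduce to a chart so that curves take values in the model space $\mathbb{B}$, introduce the map $(q,u)\mapsto \dot q - f(q,u)$ (the paper packages the initial condition into the codomain as $C(q,u)=(q(0),\dot q - f(q,u))$ rather than restricting the domain), show its partial derivative in $q$ is a Banach isomorphism, and conclude by the implicit function theorem, with the linearized Cauchy problem obtained by differentiating the defining identity. The only cosmetic difference is that you establish the isomorphism property directly via Gronwall and variation of constants, whereas the paper checks bijectivity and invokes the Banach--Schauder theorem for continuity of the inverse.
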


\begin{proof}

The proof is given in a less general case in \cite{gga} and is adapted here.
%    For any $u\in L^1(I,V)$, there exists by Picard-Lindelof theorem a unique maximal curve $q\in AC_{L^1}([0,T],V)$ solution of \eqref{dyn_Q}. Suppose $T>1$ so that $u\in \mathcal{U}_{q_0}$ As $u\in L^2(I,V)$, and $\xi:V\times\mathcal{Q}\rightarrow T\mathcal{Q}$ is $C^2$ (S.3), there exists by Picard-Lindelof one unique maximal (not necessarily global) solution $q$ of equation \eqref{eq_in_Q}. There also exists by proposition \ref{Evol_in_G} a unique $g = \operatorname{Evol}_{G^{k_0}}(u) \in AC_{L^2}(I,G^{k_0})$ such that 
%    $$
%    \dot{g}(t) = u(t)\cdot g(t)
%    $$
Let $u^*\in\mathcal{U}_{q_0}$, and $q^u=\operatorname{Evol}_{\mathcal{Q}}(u)$. Similarly to proposition \cite[prop. 2.6]{gga}, we consider a chart $\mathcal{V}\subset AC_{L^1}(I,\mathcal{Q})$ around $q^{u*}$, and we even actually reduce to the case $\mathcal{V}=AC_{L^1}(I,\mathbb{B})$. We consider the mapping
$$
C\left\{
\begin{array}{ccl}
     AC_{L^1}(I,\mathbb{B})\times\mathcal{U}_{q_0}&\longrightarrow& \mathbb{B} \times L^1(I,\mathbb{B})\\
     (q,u ) & \longmapsto & (q(0), \dot{q}-f(q,u))
\end{array}
\right.
$$
The mapping $C$ is $C^1$ since $f$ is $C^2$ and by \cite[prop 2.3]{glo}. Moreover, the derivative with regards to the first variable is given by
$$
\partial_q C(q,u)\delta q = (\delta q(0), \delta\dot{q}- \partial_qf(q,u)\delta q)
$$
and is a Banach isomorphism (by Banach-Schauder theorem). By implicit function theorem, and since $C(\operatorname{Evol}(u),u)=(q_0,0)$, there exists an open neighborhood $W\subset L^1(I,\mathbb{B})$ of $u^*$ such $\operatorname{Evol}$ is $C^l$, and the curve $\delta q = T_u\operatorname{Evol}\delta u$ is solution of the linear cauchy problem \eqref{linq2}.
\end{proof}

We study now the minimization of the functional $J$. Let recall that the Banach manifold $T^*\mathcal{Q}$ can be equipped with a canonical (weak) symplectic form $\omega$ (that we define introducing the Liouville form and its exterior derivative \cite{ARGUILLERE2015139}). In local charts of $T^*\mathcal{Q}$, the closed form $\omega$ is defined by 
$$
\omega_{q,p}\left((\delta q, \delta p), (\delta q',\delta p')\right) = (\delta p\,\vert\,\delta q') - (\delta p'\,\vert\,\delta q)
$$

Let $\mathcal{H}:T^*\mathcal{Q}\times V \to \R $ be the pre-Hamiltonian associated to the functional $J$ defined by
$$
\mathcal{H}(q,p,u) = \left(p\,\vert\, f(q,u)\right) - L(q,u)
$$
Since $f$ is $C^2$, and the duality pairing $((q,p),(q,X)) \in T^*G \oplus_Q TG \mapsto \left(p\,|\,X\right) \in \mathbb{R}$ is smooth, the Hamiltonian  $\mathcal{H}_{\mathcal{Q}} : T^*\mathcal{Q} \times V \longrightarrow \R$ is thus $C^2$. In local coordinates, the partial derivative $\partial_p \mathcal{H}_\mathcal{Q}(q,p,u)$  is given by:
$$
\forall \delta p \in T^*_q\mathcal{Q}, \ \partial_p \mathcal{H}_{\mathcal{Q}}(q,p,u) \delta p= (\delta p\,|\,f(q,u))
$$
so that $\partial_p \mathcal{H}_{\mathcal{Q}}(q,p,u)\simeq f(q,u) \in T_q\mathcal{Q}$. Therefore there exists a symplectic gradient $\nabla^{\omega}\mathcal{H}_{\mathcal{Q}}(q,p,u)\in T_{q,p}T^*\mathcal{Q}$ for every $u\in V$ . In canonical charts of $T^*\mathcal{Q}$, we have :
$$
    \nabla^{\omega}\mathcal{H}_{\mathcal{Q}}(q,p,u) = (\partial_p \mathcal{H}_{\mathcal{Q}}(q,p,u), \ -\partial_q \mathcal{H}_{\mathcal{Q}}(q,p,u))
$$
We get the following result :
\begin{theo}[Critical points of $J$] 
\label{app:critic_points}
Let $u^*\in \mathcal{U}_{q_0}$ be a critical point of $J$, that is $dJ(u^*)=0$, and denote by $q=\operatorname{Evol}(u)\in AC_{L^1}(I,Q)$ the solution of equation \eqref{dyn_Q}. Then there exists a covector $t\mapsto p(t)\in T_{q(t)}\mathcal{Q}^*$ in $AC_{L^1}(I,T\mathcal{Q}^*)$ such that $(q,p,u)$ satisfies the Hamiltonian equations :
\begin{equation}
    \label{Ham_eq_gen}
    \left\{ \begin{array}{ll}
       \left(\dot{q}, \ \dot{p}\right)   =  \nabla^{\omega}\mathcal{H}_{\mathcal{Q}}(q,p,u) \\
        \partial_u\mathcal{H}_{\mathcal{Q}}(q,p,u)  =  0 
    \end{array}
    \right.
\end{equation}
Conversely, if there exists such a covector $p$ such that $(q,p,u^*)$ satisfies the Hamiltonian equations \eqref{Ham_eq_gen}, then $u^*$ is a critical point of $J$.
\end{theo}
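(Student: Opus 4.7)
The plan is to apply the classical adjoint state (costate) construction of Pontryagin's maximum principle, adapted to the Banach manifold setting. The $C^1$ regularity of $\operatorname{Evol}_\mathcal{Q}$ established in the preceding lemma makes $J$ differentiable, and for any variation $\delta u \in L^1(I, V)$ the linearized state $\delta q = T_{u^*}\operatorname{Evol}_\mathcal{Q} \cdot \delta u$ satisfies the linear Cauchy problem \eqref{linq2}. Combining the chain rule with \eqref{derivative_J} and the differentiability of $\mathcal{D}$ yields
\[
dJ(u^*) \cdot \delta u = \int_I \left(\partial_q L(q,u^*)\delta q + \partial_u L(q,u^*)\delta u\right) dt + d\mathcal{D}(q(1))\cdot \delta q(1).
\]

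Next, I would introduce the adjoint trajectory $p \in AC_{L^1}(I, T^*\mathcal{Q})$ with $p(t) \in T^*_{q(t)}\mathcal{Q}$ defined as the solution of the backward linear Cauchy problem
\[
\dot{p}(t) = -\partial_q f(q(t), u^*(t))^{\!*} p(t) + \partial_q L(q(t), u^*(t))^{\!*}, \qquad p(1) = -d\mathcal{D}(q(1)),
\]
whose existence and uniqueness in a chart around $q$ follow from the same Cauchy--Lipschitz argument used for the linearized state, the Lipschitz bounds on $\partial_q f$ and $\partial_q L$ coming from the $C^2$ regularity of $f$ and from hypothesis \eqref{Hdiff}. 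Computing $\tfrac{d}{dt}(p(t)\,|\,\delta q(t))$ via the Leibniz rule and substituting both the adjoint equation and \eqref{linq2}, the cross-terms involving $\partial_q f$ cancel. Integrating over $I$ with the boundary conditions $\delta q(0) = 0$ and $p(1) = -d\mathcal{D}(q(1))$ then gives
\[
-d\mathcal{D}(q(1))\cdot \delta q(1) = \int_I \partial_q L(q,u^*)\delta q \,dt + \int_I (p\,|\,\partial_u f(q,u^*)\delta u)\, dt.
\]

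Substituting this identity into the expression for $dJ(u^*)\cdot\delta u$ above, one obtains
\[
dJ(u^*) \cdot \delta u = -\int_I \partial_u \mathcal{H}_\mathcal{Q}(q,p,u^*)\cdot \delta u \, dt,
\]
since by definition $\partial_u \mathcal{H}_\mathcal{Q}(q,p,u) = (p\,|\,\partial_u f(q,u)) - \partial_u L(q,u)$. As $\delta u \in L^1(I,V)$ is arbitrary, $dJ(u^*)=0$ forces $\partial_u \mathcal{H}_\mathcal{Q}(q,p,u^*)=0$ almost everywhere. The state equation $\dot{q} = f(q,u^*)$ reads exactly as $\dot{q} = \partial_p \mathcal{H}_\mathcal{Q}$, while the adjoint equation, rewritten in terms of $\mathcal{H}_\mathcal{Q}$, becomes $\dot{p} = -\partial_q \mathcal{H}_\mathcal{Q}$, so together they form the Hamiltonian system $(\dot{q}, \dot{p}) = \nabla^\omega \mathcal{H}_\mathcal{Q}(q, p, u^*)$ in canonical coordinates on $T^*\mathcal{Q}$. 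The converse implication follows by reading the same integration-by-parts identity in the opposite direction: if $(q,p,u^*)$ satisfies the Hamiltonian system, the computation above produces $dJ(u^*)\cdot\delta u = 0$ for every admissible variation.

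The main obstacle is setting up the backward adjoint equation intrinsically, since the cotangent fibers $T^*_{q(t)}\mathcal{Q}$ vary with $t$. I would address this by localizing along $q$ in charts of $\mathcal{Q}$ modeled on $\mathbb{B}$, where both the state linearization and the costate equation become linear ODEs valued in the fixed Banach spaces $\mathbb{B}$ and $\mathbb{B}^*$, and then verify chart-independence using the canonical weak symplectic form on $T^*\mathcal{Q}$. One also needs to check that the integration-by-parts step is licit under $L^1$-regularity of the controls; this reduces to the product rule for absolutely continuous curves and to the Lipschitz estimates on $\partial_q f$, $\partial_q L$ produced by \eqref{Hdiff} along the compact image of $q$.
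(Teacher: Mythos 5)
Your proposal is correct and follows essentially the same route as the paper: introduce the backward adjoint equation with terminal condition $p(1)=-d\mathcal{D}(q(1))$, integrate $(p\,|\,\delta q)$ by parts against the linearized state equation \eqref{linq2}, and identify $dJ(u^*)\cdot\delta u = -\int_I \partial_u\mathcal{H}_{\mathcal{Q}}(q,p,u^*)\,\delta u\,dt$, from which both directions of the equivalence follow. The only blemish is the spurious adjoint star on $\partial_q L(q,u^*)^{*}$ in your costate equation ($\partial_q L$ is already a covector), which does not affect the argument.
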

\begin{proof}
The proof is similar to the one given in \cite{Arguillere2020,arguillère_trélat_2017}. Let $u\in\mathcal{U}_{q_0}$, and $q=\operatorname{Evol}(u)$.
We introduce the covector $p_1 = -d\mathcal{D}(q_1)\in T^*_{q_1}\mathcal{Q}$ and we define $p\in AC_{L^1}(I,T^*\mathcal{Q})$ as the solution of the following linear Cauchy problem :
\begin{equation}
\label{CLdual_gen}
    \left\{
    \begin{array}{ll}
    \dot{p}(t)=-\partial_q \mathcal{H}_{\mathcal{Q}}(q(t),p(t),u(t))=-\big(\partial_q(f(q,u(t)))_{q=q(t)}\big)^*p(t) + \partial_q(L(q,u(t)))_{q=q(t)} \\
    p(1)=p_1
    \end{array}
    \right.
\end{equation}
    We now compute the differential of the mapping 
    $J$
    and we prove that for all $\delta u \in L^2(I,V)$,
    $$
    dJ(u)\delta u = -\int_I \partial_u\mathcal{H}(q(t),p(t),u(t))\delta u dt
    $$
    Let $\delta u \in L^2(I,V)$, and recall that $J(u)= J_0(\operatorname{Evol}(u),u)+ \mathcal{D}(\operatorname{Evol}(u)(1))$ so that we have 
    \begin{align*}
        dJ(u)\delta u &= dJ_0(q,u)(\delta q,\delta u) + d\mathcal{D}(q_1)(\delta q) \\
        &= \int_I \frac{\partial}{\partial q}L(q(t),u(t))\delta q(t) + \frac{\partial}{\partial u}L(q(t),u(t))\delta u(t) dt - (p_1\,\vert\, \delta q(1))
    \end{align*}
%The endpoint mapping $\operatorname{end}_{q_0} : I \rightarrow \mathcal{Q}$ is the composition of the mapping $u\mapsto q^u = \operatorname{Evol}(u)\cdot q_0$ and the linear evaluation mapping, and 
where $\delta q = T_u\operatorname{Evol}(\delta u)$ and thus satisfies the linear Cauchy problem \eqref{linq2}
$$
\delta q(0) = 0, \ \delta \dot{q}(t) = \partial_q \left(f(q,u(t)\right)_{|q=q(t)}\delta q(t) + \partial_u(f(q(t),u)_{|u=u(t)}\delta u(t).
$$
Now, since $t\mapsto p(t)$ is also solution of the linear Cauchy equation \eqref{CLdual_gen} and
    using integration by part, we find that
    \begin{align*}
    (p(1)&\,|\,\delta q(1)) = \left(p(0)\,|\,\delta q(0)\right) + \int_I \left(\dot{p}(t)\,|\,\delta q(t)\right) + \left(p(t)\,|\,\delta\dot{q}(t)\right)dt\\
    &=\int_I-\bigg((\partial_q\left(f(q,u(t)\right)_{|q=q(t)})^*p(t) -\partial_q(L(q,u(t)))_{q=q(t)}\,|\,\delta q(t)\bigg) \\ &+\bigg(p(t)\,|\,\partial_q \left(f(q,u(t)\right)_{|q=q(t)}\delta q(t) + \partial_u(f(q(t),u)_{|u=u(t)}\delta u(t) \bigg) dt \\
    &=\int_I \left(p(t)\,|\,\partial_u(f(q(t),u)_{|u=u(t)}\delta u(t)\right) + \partial_q (L(q,u(t))_{|q=q(t)}\delta q(t) dt 
    \end{align*}
Combining this with the derivative $dJ_0(q,u)$, we finally get
\begin{align*}
    dJ(u)\delta u &= \int_I \partial_u\left(L(q(t),u(t))\right)\delta u(t) - \left(p(t)\,|\,\partial_u\left(f(q(t),u(t))\right)\delta u(t)\right) dt \\
    &= -\int_I \partial_u\mathcal{H}(q(t),p(t),u(t))\delta u dt
\end{align*}
Therefore we have the following equivalence :
\begin{equation}
    dJ(u) = 0 \iff \forall t,\  \partial_u \mathcal{H}(q(t),p(t),u(t))=0
\end{equation}
which concludes the proof.
\end{proof}

\end{document}